\newcommand{\R}{{\mathbb R}}
\newcommand{\N}{{\mathbb N}}
\newcommand{\Rd}{{\R}^d}
\newcommand{\Comp}{\mathrm{K}(\Rd)}
\newcommand{\inK}{\in\Comp}
\newcommand{\haus}{\mathrm{haus}}
\newcommand{\dist}{\mathrm{dist}}
\newcommand{\CH}{{\mathrm{CH}}}
\newcommand{\Graph}{{\mathrm{Graph}}}
\newcommand{\sign}{{\mathrm{sign}}}
\newcommand{\PrjXonY}[2]{\Pi_{#2}{(#1)} }
\newcommand{\Pair}[2]{\Pi \big ( {#1},{#2} \big )}
\newcommand{\WeightedMetInt} { { \scriptstyle( \cal M_{\kappa} ) } \int_{a}^{b} \kappa(x)F(x)dx }
\newcommand{\Map}[1]{F:#1 \rightarrow \Comp }
\newcommand{\LipAtPoint}[2]{ \mathrm{Lip} \{ {#1}, {#2} \} }
\newcommand{\CBV}{\mathrm{CBV}[a,b]}
\newcommand{\BV}{\mathrm{BV}[a,b]}
\newcommand{\calF}{\cal{F}[a,b]}
\newcommand{\LeftLocalModul}[3]{\omega^{-}\big( {#1},{#2},{#3} \big)}
\newcommand{\NewLeftLocalModul}[3]{\varpi^{-}\big( {#1},{#2},{#3} \big)}
\newcommand{\NewRightLocalModul}[3]{\varpi^{+}\big( {#1},{#2},{#3} \big)}
\newcommand{\QuasiLocalModul}[3]{\varpi\big( {#1},{#2},{#3} \big)}
\newcommand{\LocalModulCont}[3]{\omega\big( {#1},{#2},{#3} \big)}
\newcommand{\ModulContPOne}[2]{\vartheta \big( {#1},{#2} \big)}
\newcommand{\setMS}{\mathcal{S}(F)}
\newcommand{\SymbText}[1]{\hbox to 10cm { \dotfill #1 \dotfill}}
\newtheorem{remark}{Remark}[section]
\newtheorem{theorem}[remark]{Theorem}
\newtheorem{corol}[remark]{Corollary}
\newtheorem{lemma}[remark]{Lemma}
\newtheorem{result}[remark]{Result}
\newtheorem{defin}[remark]{Definition}
\begin{document}

\title {Metric approximation of set-valued functions of bounded variation by integral operators}

\renewcommand{\thefootnote}{\fnsymbol{footnote}}
\author{
Elena E. Berdysheva \footnotemark[1], \quad
Nira Dyn \footnotemark[2], \quad
Elza Farkhi \footnotemark[2]\;\quad
Alona Mokhov \footnotemark[4]\;
}
\footnotetext[1]{ University of Cape Town, South Africa}
\footnotetext[2]{Tel-Aviv University, School of Mathematical Sciences}
\footnotetext[4]{Afeka, Tel-Aviv Academic College of Engineering}
\date{}
\maketitle

%\end{titlepage}
\medskip

{ \small {\normalsize \textbf {Abstract.} We introduce an adaptation of integral approximation operators to set-valued functions (SVFs, multifunctions), mapping a compact interval $[a,b]$ into the space of compact non-empty subsets of $\R^d$.	All operators are adapted by replacing the Riemann integral for real-valued functions by the weighted metric integral for SVFs of bounded variation with compact graphs. For such a set-valued function $F$, we obtain pointwise error estimates for sequences of integral operators at points of continuity, leading to convergence at such points to $F$.
At points of discontinuity of $F$, we derive estimates, which yield the convergence to a set, first described in our previous work on the metric Fourier operator. 
Our analysis uses recently defined one-sided local quasi-moduli at points of discontinuity and several notions of local Lipschitz property at points of continuity. 

We also provide a global approach for error bounds.  A multifunction $F$ is represented by the set of all its metric selections, while its approximation (its image under the operator) is represented by the set of images of these metric selections   under the operator.
A bound on the Hausdorff distance between these two sets of single-valued functions in $L^1$ provides our global estimates.

The theory is illustrated by presenting the examples of two concrete operators: the Bernstein-Durrmeyer operator and the Kantorovich operator.		
	
\medskip

\noindent{ \small {\normalsize \textbf{Key words:}} { Set-valued functions, functions of bounded variation, metric integral, metric approximation, integral operators, positive linear operators, rate of convergence }

\noindent{ \small {\normalsize \textbf{Mathematics Subject Classification 2020:}} 26E25, 28B20, 41A35, 41A36, 41A25, 26A45

%==========================================================================
%======================== Introduction =====================================
%===========================================================================

\section {Introduction} \label{Sect_Intro}

We study set-valued functions (SVFs, multifunctions) that map a compact interval $[a,b] \subset \R$ into the space of compact non-empty subsets of $\R^d$. 
These functions  appear in different fields such as dynamical systems, control theory, optimization, game theory, differential inclusions, economy, geometric modeling. See the book~\cite{AubinFrankowska:90} for foundations of set-valued analysis.

Approximation methods for SVFs has been developing in the last decades. 
Older works deal mostly with convex-valued multifunctions and their approximation based on Minkowski linear combinations,  e.g., \cite{MNikolskii:Opt90, DynFarkhi:00, DF:04, Muresan:SVApprox2010, BaierPerria:11, Campiti:2019}. In~\cite{VIT:79}, such an adapation of the classical Bernstein polynomial operator is proved to converge to SVFs with convex compact images (values).
Yet, it is shown that this adaptation fails to approximate general SVFs (with general compact not necessarily convex images). 
In general, approximation methods developed for multifunctions with convex images usually are not suitable for general SVFs. 
 
 %For example, Vitale's adoptation of the Bernstein polynomial operator converges to a function whose values are convex hulls of the values of the original function. Clearly, different approaches are needed for approximation of SVFs with general images.

A first successful attempt to approximate general SVFs from their samples is accomplished by Z.~Artstein in~\cite{Artstein:MA}, where piecewise linear approximants are constructed. This is done by replacing binary Minkowski average of two sets with the metric average, which is further extended in~\cite{DFM:Chains} to the metric linear combination of several sets. 
Based on the metric linear combination, N.~Dyn, E.~Farkhi and A.~Mokhov developed in a series of works~\cite{DynFarkhi:01, DynMokhov:06, DFM:Chains, DFM:07serdica, DFM:Book_SV-Approx} adaptation of classical sample-based approximation operators to continuous general SVFs. For these adapted operators, termed metric operators, error estimates are obtained, which for most operators are similar to those obtained in the real-valued case. Special attention is given to Bernstein polynomial operators, Schoenberg spline operators and polynomial interpolation operators. Later in~\cite{	BDFM:2019}, the above metric approach is extended to SVFs of bounded variation.

The metric approach is applied in~\cite{DFM:MetricIntegral} to introduce and study the metric integral for general SVFs of bounded variation. The metric integral is not necessarily convex in contrast to the Aumann integral, which is always convex, even if the integrand is not convex-valued~\cite{AUM:65}.  In~\cite{BDFM:2021} the metric integral is extended to the weighted metric integral, which is used, with the Dirichlet kernels as weight functions, to define metric Fourier partial sums for SVFs of bounded variation. The convergence of these partial sums is analyzed at points of continuity of a multifunction as well as at points of discontinuity.
An important tool in the analysis at points of discontinuity is the notion of one-sided local quasi-moduli of a function of bounded variation.

In this paper we adapt integral approximation operators for real-valued functions to general SVFs of bounded variation. Previous adaptations of integral operators to SVFs are limited to convex-valued multifunctions and are based on the Aumann integral (see e.g.~\cite{Babenko:2016, {Campiti:2022}}).
Our adaptation is based on the weighted metric integral, and its analysis applies and extends the techniques developed in~\cite{BDFM:2021}.

The outline of the paper is as follows.
Section~\ref{Sect_Prelim} gives a short overview of notions we use in the paper, and also discusses different regularity properties of functions with values in a metric space. In Section~\ref{Sect_Convergence_RealValued} we refine known results concerning approximation of real-valued functions by sequences of integral approximation operators, which are necessary for the adaptation of these operators to SVFs. 

  The core part of the paper is Section~\ref{Sect_ApproxSVF} where we construct an adaptation of integral approximation operators to general SVFs. For set-valued functions of bounded variation with compact graphs,  we study pointwise convergence, in the Hausdorff metric, of sequences of such operators at points of continuity of the function as well as at points of discontinuity, and derive estimates for the rate of convergence. In Section~\ref{Sect_SpecificOperators} we illustrate our theory by considering examples  of two particular integral approximation operators, the Bernstein-Durrmeyer operator and the Kantorovich operator.

In the final Section~\ref{Sect_TwoSets} we provide global error bounds. The multifunction $F$ is represented by the set of all its metric selections (see~\cite{DFM:Book_SV-Approx} for more information on representations of SVFs), while its approximation (its image under the operator) is represented by the set of images of these metric selections under the operator.
A bound of the Hausdorff distance between these two sets of single-valued functions in $L^1$ is obtained using results from~\cite{Berens_DeVore}.

%%%%%%%%%%%%%%%%%%%%%%%%%%%%%%%%%%%%%%%%%%%%%%%%%%%%%%%%%%%%%%%%%%%%%%%%%%%%
%======================== Preliminaries ====================================
%%%%%%%%%%%%%%%%%%%%%%%%%%%%%%%%%%%%%%%%%%%%%%%%%%%%%%%%%%%%%%%%%%%%%%%%%%%%

\section {Preliminaries}\label{Sect_Prelim}

In this section we introduce some notation and basic notions related to sets and set-valued functions. 
We discuss notions of regularity of functions in metric spaces. We review the notions of metric selections and the metric integral of set-valued functions.

\subsection {On sets}\label{Sect_Prelim_OnSets}

All sets considered from now on are sets in $\Rd$.  We denote by $\Comp$\label{CompSets} the collection of all compact non-empty subsets of~$\Rd$. 
The metric in $\Rd$ is of the form $\rho(u,v)=|u-v|$, where $|\cdot|$ is any fixed norm on $\Rd$. Recall that~$\Rd$ endowed with this metric is a complete metric space and that all norms on $\Rd$ are equivalent.

\noindent To measure the distance between two non-empty sets ${A,B \in \Comp}$, we use the Hausdorff metric  based on~$\rho$
$$
%\begin{equation}\label{defin_Haus}
	\haus(A,B)_{\rho}= \max \left\{ \sup_{a \in A}\dist(a,B)_{\rho},\; \sup_{b \in B}\dist(b,A)_{\rho} \right\},
%\end{equation}
$$
where the distance from a point $c$ to a set $D$ is $\dist(c,D)_{\rho}=\inf_{d \in D}\rho(c,d)$.  
It is well known that $\Comp$ endowed with the Hausdorff metric is a complete metric space~\cite{RockWets, {S:93}}.

In the following, we keep the metric in $\Rd$ fixed, and omit the notation $\rho$ as a subscript.

 We denote by $|A|=\haus (A,\{0\})$ the ``norm'' of the set $A \in \Comp$.
The set of projections of $a \in \Rd$ on a set $B \inK$ is
$
\PrjXonY{a}{B}=\{b \in B \ : \ |a-b|=\dist(a,B)\},%\label{Proj_PointOnSet}
$
and the set of metric pairs of two sets $A,B \inK$ is
$$
\Pair{A}{B} = \{(a,b) \in A \times B \ : \ a \in \PrjXonY{b}{A}\;\, \mbox{or}\;\, b\in\PrjXonY{a}{B} \}.%\label{metric_pair}
$$
Using metric pairs, we can rewrite 
$$
%\begin{equation} \label{haus_MetrPair}
\haus(A,B)= \max \{|a-b| \ :\ (a,b)\in \Pair{A}{B}\}.
%\end{equation}
$$

We recall the notions of  a metric chain and of a metric linear combination~\cite{DFM:MetricIntegral}.
\begin{defin}\label{Def_MetChain_MetSelection} \cite{DFM:MetricIntegral}
	Given a finite sequence of sets $A_0, \ldots, A_n \in \Comp$, $n \ge 1$,  a metric chain of $A_0, \ldots, A_n$ is an $(n+1)$-tuple $(a_0,\ldots,a_n)$ such that $(a_i,a_{i+1}) \in \Pair {A_i}{A_{i+1}}$, $i=0,1,\ldots,n-1$. The collection of all metric chains of $A_0, \ldots, A_n$ is denoted by  
	$$
	\CH(A_0,\ldots,A_n)= \left\{ (a_0,\ldots,a_n) \ : \ (a_i,a_{i+1}) \in \Pair {A_i}{A_{i+1}}, \  i=0,1,\ldots,n-1 \right\}.
	$$
	The metric linear combination of the sets $A_0, \ldots, A_n \in \Comp$, $n \ge 1$, is
	$$
	\bigoplus_{i=0}^n \lambda_i A_i =
	\left\{ \sum_{i=0}^n \lambda_i a_i \ : \ (a_0,\ldots,a_n) \in \CH(A_0,\ldots,A_n) \right\}, \quad \lambda_0,\ldots,\lambda_n \in \R.
	$$
\end{defin}

\begin{remark}\label{Remar_ThereIsChain}
	For any $j\in \N$,  $0 \le j\le n$ and for any $a \in A_j$ there exists a metric chain $(a_0, \ldots, a_n) \in \CH(A_0,\ldots,A_n)$ such that $a_j=a$. For a possible construction see~\cite{DFM:Chains}, Figure~3.2.
\end{remark}

Note that the metric linear combination depends on the order of the sets, in contrast to the Minkowski linear combination of sets which is defined by
$$
%\begin{equation}\label{def_MinkLinrComb}
\sum_{i=0}^n \lambda_i A_i =
\left\{ \sum_{i=0}^n \lambda_i a_i \ : \  a_i \in A_i \right\},\quad n \ge 1.
%\end{equation}
$$
The upper Kuratowski limit of a sequence of sets $\{A_n\}_{n=1}^{\infty}$  is the set of all limit points of converging subsequences
$\{a_{n_k}\}_{k=1}^{\infty}$, where ${a_{n_k} \in A_{n_k} }$, $k\in \N$, namely
\begin{equation}\label{def_UpperKuratLim}
\limsup_{n \to \infty} A_n = \left\{a \ : \  \exists\, \{n_k\}_{k=1}^{\infty},\, n_{k+1}>n_k,\, k\in \N,\ \exists \, a_{n_k} \in A_{n_k} \text{ such that } \lim_{k \to \infty}a_{n_k} = a \right\}.
\end{equation}

\subsection {Notions of regularity of functions with values in a metric space}\label{Sect_Prelim_Regularuty}

In this paper we consider functions defined on a fixed compact interval $[a,b] \subset \R$ with values in a complete metric space $(X,\rho)$, where $X$ is either $\Rd$ or $\Comp$. 

We recall the notion of the variation of ${f:[a,b]\rightarrow X}$. Let ${\, \chi=\{x_0,\ldots, x_n\} }$, $a=x_0 < x_1 < \cdots <x_n=b$, be a partition of the interval $[a,b]$ with the norm 
$$
{\displaystyle |\chi|=\max_{0\le i\le n-1} (x_{i+1}-x_i)}.
$$ 
The variation of $f$ on the partition $\chi$ is defined as 
$
V(f,\chi) = \sum_{i=1}^{n} \rho(f(x_i),f(x_{i-1}))\, .
$
The total variation of $f$ on $[a,b]$ is 
$$
V_{a}^{b}(f) = \sup_{\chi} V(f,\chi),
$$
where the supremum is taken over all partitions of $[a,b]$.

A function $f$ is said to be of bounded variation on $[a,b]$ if ${ V_{a}^{b}(f) < \infty}$. We call functions of bounded variation BV functions and write $f \in \BV$. If $f$ is also continuous,  we write $f\in \CBV$.

For $f \in \BV$ the  function $v_f:[a,b]\rightarrow \R$,\, $v_f(x)=V_{a}^{x}(f)$ is called the variation function of $f$. Note that 
$$
V_{z}^{x}(f)=v_f(x)-v_f(z) \quad  \mbox{for} \quad a\le z<x \le b,
$$
and that $v_f$ is monotone non-decreasing.\\

\noindent For a BV function ${f:\R \rightarrow X}$ the following property holds (see e.g. Lemma 2.4 in~\cite{BDFM:2019}),
\begin{equation}\label{IntegralOfVariation}
\int_{a}^{b} V_{x-\delta}^{x+\delta}(f)dx \le 2\delta V_{a}^{b}(f).	
\end{equation}

\noindent We recall the notion of the local modulus of continuity~\cite{SendovPopov},  which is central to the approximation of functions at continuity points.

For $f : [a,b] \to X$  the local modulus of continuity at $x^* \in [a,b]$ is
$$%\begin{equation}\label{Def_ClassicalModuliContin}
	\LocalModulCont{f}{x^*}{\delta} = \sup \left\{\, \rho(f(x_1),f(x_2)): \; x_1,x_2 \in \left[x^*-\delta/2,x^*+\delta/2 \right]\cap[a,b] \,\right\},\quad \delta >0.
$$%\end{equation}

It follows from the definition of the variation that~%(see e.g.~\cite{BDFM:2021}, Proposition~3.6)
\begin{result}\label{Result_f_cont->v_f_cont}
	For a function $f : [a, b] \to X$, $f\in \BV$,
%	{\color{red}
%		\begin{itemize}
%			\item [(i)] 
			$$\omega(f,x^*,\delta) \le \omega(v_f,x^*,\delta), \quad x^* \in [a,b], \quad \delta > 0.$$
%			\item [(ii)]$\widehat{\omega}(f,x^*,\delta) \le \widehat{\omega}(v_f,x^*,\delta), \quad x^* \in [a,b], \quad \delta > 0.$
%		\end{itemize}
%	}
Moreover, $f$ is continuous at $x^* \in [a,b]$ if and only if $v_f$ is continuous at~$x^*$.
\end{result}

\begin{result}\label{Result_f_OneSidedCont->v_f_OneSidedCont}
		A function $f:[a,b]\to X$, $f \in \BV$ is left continuous at $x^*\in (a,b]$ if and only if $v_f$ is left continuous at~$x^*$.
		The function $f$ is right continuous at $x^* \in [a,b)$ if and only if $v_f$ is right continuous at~$x^*$.
\end{result}

A function $f : [a,b] \to X$ of bounded variation with values in a complete metric space $(X,\rho)$ is not necessarily continuous, but has right and left limits at any point $x$~\cite{Chistyakov:On_BV-mappings}. We denote the one-sided limits by
$$ 
f(x+) = \lim_{ t \to x+0 } f(t), \quad f(x-) = \lim_{ t \to x-0 } f(t) .
$$  

In~\cite{BDFM:2021}, we introduced the notion of the left and right local quasi-moduli.
For a function $f : [a,b] \to X$ of bounded variation, the left local quasi-modulus at point $x^*$ is
\begin{equation}\label{Left_QuasiModuli}
	\NewLeftLocalModul{f}{x^*}{\delta} = \sup{ \big \{ \rho(f(x^*-),f(x)) \ : \ x \in [x^*-\delta,x^*) \cap [a,b] \big\} }, \quad \delta >0\ , \; x^* \in (a,b].
\end{equation}
Similarly, the right local quasi-modulus is
\begin{equation}\label{Right_QuasiModuli}
	\NewRightLocalModul{f}{x^*}{\delta} = \sup{ \{ \rho(f(x^*+),f(x)) \ : \ x \in (x^*,x^* + \delta] \cap [a,b] \} }, \quad \delta >0\ , \; x^* \in [a,b).
\end{equation}
Clearly, for $f \in \BV$ the local quasi-moduli satisfy
$$
%\begin{equation}\label{prop_quasi-moduli}
	\lim_{ \delta \to 0^+ } \NewLeftLocalModul{f}{x^*}{\delta}=0, \quad x^* \in (a,b],
	\quad \text{and} \quad
	\lim_{ \delta \to 0^+ } \NewRightLocalModul{f}{x^*}{\delta} =0 , \quad x^* \in [a,b) .
%\end{equation}
$$
Defining
$$ 
%\begin{equation}\label{QuasiModuli}
\QuasiLocalModul{f}{x^*}{\delta} = \max \{ \NewLeftLocalModul{f}{x^*}{\delta},  \NewRightLocalModul{f}{x^*}{\delta} \},
%\end{equation}
$$
we obtain
\begin{equation}\label{prop_symm_quasi-modul}
\lim_{ \delta \to 0^+ } \QuasiLocalModul{f}{x^*}{\delta} = 0.
\end{equation}

\begin{lemma}\label{Lemma_QuasiMod_f<=QuasiMod_v_f}
	Let $f : [a,b] \to X$, $f \in \BV$, then for any $x^*\in (a,b]$ or $[a,b)$, respectively,  and $ \delta>0$ we have
	$$
	\NewLeftLocalModul{f}{x^*}{\delta} \le \NewLeftLocalModul{v_f}{x^*}{\delta}, \quad  \NewRightLocalModul{f}{x^*}{\delta} \le \NewRightLocalModul{v_f}{x^*}{\delta}.
	$$
\end{lemma}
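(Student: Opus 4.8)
The plan is to reduce everything to the elementary inequality
$\rho(f(z),f(x)) \le \Var{z}{x}{f} = v_f(x)-v_f(z)$, valid for $a \le z < x \le b$, which follows at once from the definition of the total variation by using the two-point partition $\{z,x\}$ of $[z,x]$ and the identity $\Var{z}{x}{f}=v_f(x)-v_f(z)$ recorded earlier.

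First I would treat the left quasi-modulus. Fix $x \in [x^*-\delta,x^*) \cap [a,b]$. Since $f \in \BV$, the left limit $f(x^*-)$ exists, and since $v_f$ is monotone non-decreasing (and bounded by $\Var{a}{b}{f}<\infty$) the left limit $v_f(x^*-)$ exists as well. Choose a sequence $t_n \uparrow x^*$ with $x < t_n < x^*$; then $f(t_n) \to f(x^*-)$ and $v_f(t_n) \to v_f(x^*-)$. Applying the elementary inequality to the pair $(x,t_n)$ gives $\rho(f(t_n),f(x)) \le v_f(t_n)-v_f(x)$. Letting $n \to \infty$ and using the continuity of $\rho(\cdot,f(x))$ yields $\rho(f(x^*-),f(x)) \le v_f(x^*-)-v_f(x)$. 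Because $v_f$ is non-decreasing and $x < x^*$, the right-hand side equals $|v_f(x^*-)-v_f(x)|$, i.e. exactly the increment of $v_f$ entering $\NewLeftLocalModul{v_f}{x^*}{\delta}$. Taking the supremum over all admissible $x$ gives the first inequality.

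The right quasi-modulus is handled symmetrically: for $x \in (x^*,x^*+\delta] \cap [a,b]$ pick $t_n \downarrow x^*$ with $x^* < t_n < x$, apply the elementary inequality to $(t_n,x)$ to get $\rho(f(t_n),f(x)) \le v_f(x)-v_f(t_n)$, and pass to the limit to obtain $\rho(f(x^*+),f(x)) \le v_f(x)-v_f(x^*+) = |v_f(x^*+)-v_f(x)|$; taking the supremum gives the second inequality.

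I expect the only delicate point to be the passage to the limit, where one must justify that the one-sided limits of both $f$ and $v_f$ exist (guaranteed respectively by $f \in \BV$ and by the monotonicity of $v_f$) and that $\rho(\cdot,f(x))$ is continuous, so that $\lim_n \rho(f(t_n),f(x)) = \rho(f(x^*-),f(x))$. Everything else is routine.
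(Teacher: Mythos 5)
Your proposal is correct and follows essentially the same route as the paper: both rest on the inequality $\rho(f(t),f(x))\le V_x^t(f)=v_f(t)-v_f(x)$ followed by passing to the one-sided limit $t\to x^*$ to get $\rho(f(x^*\pm),f(x))\le |v_f(x^*\pm)-v_f(x)|$ and then taking the supremum. Your extra care in justifying the limit passage (existence of the one-sided limits of $f$ and $v_f$, continuity of $\rho(\cdot,f(x))$) is exactly the content the paper compresses into one displayed line.
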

\begin{proof}
The first inequality follows from the fact that for $x^*\in (a,b]$ and $\max\{x^*-\delta, a\}  \le x < x^*$
we have
$$
	\rho(f(x^*-), f(x)) \le  \lim_{ t \to x^*-0 }  V_x^t(f) = \lim_{ t \to x^*-0 } v_f(t) - v_f(x) = \rho(v_f(x^*-), v_f(x)) .
$$
Similarly one can show the second inequality.

%\begin{align*} 
%	\NewLeftLocalModul{f}{x^*}{\delta} & = \sup \{\rho(f(x^*-0), f(x)) \ : \  \max\{x^*-\delta, a\}  \le x < x^* \} \\ &\le
%	\sup \{  \rho(v_f(x^*-0), v_f(x)) \ :\  \max\{x^*-\delta, a\}  \le x < x^* \} =\NewLeftLocalModul{v_f}{x^*}{\delta}.
%\end{align*}	
\end{proof}
\smallskip	

Below we discuss several notions of Lipschitz regularity. A functions $f : [a,b] \to (X,\rho)$  is Lipschitz continuous with a constant $\cal L > 0$, if
$$ 
\rho( f(x), f(y)) \leq {\cal L} |x-y|, \quad \forall\,x,y \in [a,b]. 
$$
 
\begin{defin}\label{Def_Lip}
	Let $f : [a,b] \to (X,\rho)$.
	\begin{itemize} 
		\item[(a)]
		We say that $f$ is \textbf{locally Lipschitz around a point} $x$ with the Lipschitz constant $\cal{L}>0$ if there exists $\delta >0$ such that
		\begin{equation}\label{Lip_functions}
			\rho( f(x_1), f(x_2)) \leq {\cal L} |x_1-x_2|, \quad \forall \, x_1, x_2 \in \left(x-\delta/2, x+\delta/2 \right)  \cap [a,b].
		\end{equation}
		\item[(b)]
		A function $f$ is \textbf{locally Lipschitz at a point} $x$ with the Lipschitz constant $\cal{L}>0$ if there exists $\delta >0$ such that
		\begin{equation}\label{loc_Lip_at_point}
			\rho( f(z), f(x)) \leq {\cal L} |z-x|, \quad \forall\,z \in \left (x-\delta/2, x+\delta/2 \right) \cap [a,b]. 
		\end{equation}
		We denote by $\LipAtPoint { x }{ \cal{L}} $ the collection of all functions $f$ satisfying~\eqref{loc_Lip_at_point}.
		\item[(c)] A function $f$ is \textbf{globally Lipschitz at a point} $x$ 
		if there exists $\cal{L}>0$ such that
		\begin{equation}\label{glob_Lip_at_point}
			\rho( f(z), f(x)) \leq {\cal L} |z-x|, \quad \forall\,z \in  [a,b]. 
		\end{equation}
	\end{itemize}
\end{defin}

\begin{remark}\label{f_AroundLip_then_LocalLIp}
	Note that if $f$ is locally Lipschitz around a point $x$ with the Lipschitz constant $\cal{L}$, then $f \in  \LipAtPoint {x}{ \cal{L} }$, but the inverse implication does not hold. For example, the function $f(x)=x \sin(1/x)$ for $x\neq 0$ and $f(0)=0$ is not locally Lipschitz around $x=0$, while $f \in \LipAtPoint{0}{1}$. 
\end{remark}

\noindent We say that $f : [a,b] \to X$  is \textbf{bounded} on $[a,b]$ if there exists $y^* \in X$ such that 
$$
M(f,y^*) = \sup_{x\in [a,b]} \rho (f(x), y^*) < \infty.
$$

\noindent The following lemmas deal with relations between the above notions.

\begin{lemma}\label{lemma:loc_Lip_glob_Lip_at_point}

		If $f \in \LipAtPoint {x}{ \cal{L} }$ and $f$ is bounded on $[a,b]$, then $f$ is also globally Lipschitz at~$x$. 
		%{\color{red} with some constant . }
		%$\widetilde{\mathcal{L}} = \max \left( \mathcal{L}, \frac{4M(f,y^*)}{\delta} \right)$. 
\end{lemma}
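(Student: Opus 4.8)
The plan is to split the interval $[a,b]$ into a small neighborhood of $x$, where the hypothesis $f \in \LipAtPoint{x}{\cal{L}}$ supplies the desired estimate verbatim, and the complementary part, where boundedness will be turned into a linear bound. Let $\delta>0$ be the radius furnished by the definition of $\LipAtPoint{x}{\cal{L}}$, and let $y^*\in X$ be a point witnessing boundedness, so that $M = M(f,y^*)=\sup_{z\in[a,b]}\rho(f(z),y^*)<\infty$.

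First I would record the uniform bound coming from boundedness: for every $z\in[a,b]$ the triangle inequality gives $\rho(f(z),f(x))\le \rho(f(z),y^*)+\rho(y^*,f(x))\le 2M$.

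Next I treat two cases. If $z\in(x-\delta/2,x+\delta/2)\cap[a,b]$, then by~\eqref{loc_Lip_at_point} we already have $\rho(f(z),f(x))\le \cal{L}\,|z-x|$. If instead $z\in[a,b]$ with $|z-x|\ge \delta/2$, then the uniform bound yields $\rho(f(z),f(x))\le 2M \le \frac{4M}{\delta}\,|z-x|$, where the last step is exactly the place where the separation $|z-x|\ge\delta/2$ is used to convert the constant $2M$ into a multiple of $|z-x|$. Setting $\cal{L}'=\max\{\cal{L},\,4M/\delta\}$, both cases give $\rho(f(z),f(x))\le \cal{L}'\,|z-x|$ for all $z\in[a,b]$, which is precisely~\eqref{glob_Lip_at_point}, so $f$ is globally Lipschitz at $x$.

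There is no real obstacle here; the argument is elementary and the interesting content is essentially bookkeeping of the constant. The only point requiring a little care is to ensure that the ``far'' points receive a genuinely linear bound rather than a merely constant one, which is handled by the strict separation $|z-x|\ge\delta/2$; note also that boundedness is indispensable, since without it $\rho(f(z),f(x))$ need not stay controlled away from $x$.
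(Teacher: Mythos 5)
Your proof is correct and follows essentially the same route as the paper: the local Lipschitz bound on $\left(x-\delta/2,x+\delta/2\right)$, the triangle-inequality bound $\rho(f(z),f(x))\le 2M$ from boundedness, the conversion to $\frac{4M}{\delta}|z-x|$ when $|z-x|\ge\delta/2$, and the final constant $\max\{\mathcal{L},4M/\delta\}$ all match the paper's argument exactly.
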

\begin{proof}
By Definition~\ref{Def_Lip}~(b), there is $\delta>0$ such that for $|z - x| < \frac{\delta}{2}$ we have $\rho( f(z), f(x)) \leq  \mathcal{L} |z - x|$. Also, since $f$ is bounded there is $y^*$ such that
	$$
	\rho( f(z), f(x)) \le \rho( f(z), y^*) + \rho( y^*, f(x)) \le 2M(f,y^*).
	$$
	If $|z - x| \ge \frac{\delta}{2}$ we obtain from the above inequality the estimate $\rho( f(z), f(x)) \le \frac{4 M(f,y^*)}{\delta} |z-x|$.
	
\noindent Altogether \eqref{glob_Lip_at_point} holds with $\widetilde{\mathcal{L}} = \max \left( \mathcal{L}, \frac{4M(f,y^*)}{\delta} \right)$.
\end{proof}

\begin{lemma}\label{Lip_1}
	If $f \in \BV$ is locally Lipschitz around a point $x$ with a constant $\cal L >0$, then
	\begin{enumerate}
		\item [(i)]  $v_f$ is locally Lipschitz around the point $x$ with the same constant $\cal L$,
		\item [(ii)]  $v_f \in  \LipAtPoint{x}{ \cal{L} }$.
	\end{enumerate}
\end{lemma}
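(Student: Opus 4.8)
The plan is to derive both parts directly from the definition of the total variation, together with the telescoping identity $V_z^x(f) = v_f(x) - v_f(z)$ and the monotonicity of $v_f$ recorded in Section~\ref{Sect_Prelim_Regularuty}. First I would fix the neighborhood supplied by the hypothesis: since $f$ is locally Lipschitz around $x$ with constant $\mathcal{L}$, Definition~\ref{Def_Lip}~(a) provides a $\delta > 0$ such that $\rho(f(x_1), f(x_2)) \le \mathcal{L}\,|x_1 - x_2|$ for all $x_1, x_2$ in the set $I = (x - \delta/2, x + \delta/2) \cap [a,b]$.

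For part (i), I would take arbitrary $x_1, x_2 \in I$ with $x_1 < x_2$ and bound the variation of $f$ on $[x_1, x_2]$. For any partition $\chi$ given by $x_1 = t_0 < t_1 < \cdots < t_n = x_2$, every node lies in $[x_1, x_2] \subseteq I$, so the Lipschitz estimate applies to each consecutive pair and
$$
V(f, \chi) = \sum_{i=1}^{n} \rho(f(t_i), f(t_{i-1})) \le \mathcal{L} \sum_{i=1}^{n} (t_i - t_{i-1}) = \mathcal{L}\,(x_2 - x_1).
$$
Taking the supremum over all such partitions yields $V_{x_1}^{x_2}(f) \le \mathcal{L}\,(x_2 - x_1)$. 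By the telescoping identity this equals $v_f(x_2) - v_f(x_1)$, and since $v_f$ is monotone non-decreasing this difference is nonnegative, hence equal to $|v_f(x_2) - v_f(x_1)|$. Therefore $|v_f(x_2) - v_f(x_1)| \le \mathcal{L}\,|x_2 - x_1|$ for all $x_1, x_2 \in I$, which is exactly the assertion that $v_f$ is locally Lipschitz around $x$ with the same constant $\mathcal{L}$.

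Part (ii) is then immediate: having established in (i) that $v_f$ is locally Lipschitz around $x$ with constant $\mathcal{L}$, Remark~\ref{f_AroundLip_then_LocalLIp} gives $v_f \in \LipAtPoint{x}{\mathcal{L}}$ at once. I do not anticipate any genuine obstacle here; the only point requiring a moment's care is verifying that every node of every partition of $[x_1, x_2]$ remains inside the neighborhood $I$ on which the Lipschitz bound is valid, which holds because $[x_1, x_2] \subseteq I$. The essential mechanism is simply that a Lipschitz bound on the increments of $f$ transfers, through the supremum defining the variation, into a Lipschitz bound on $v_f$ with no loss in the constant.
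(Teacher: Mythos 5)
Your proof is correct and follows essentially the same route as the paper: bound the variation over any partition of $[x_1,x_2]\subseteq I$ by $\mathcal{L}(x_2-x_1)$, take the supremum, identify the result with $v_f(x_2)-v_f(x_1)$, and deduce (ii) from (i) via Remark~\ref{f_AroundLip_then_LocalLIp}. No gaps.
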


\begin{proof} 
	Assume that $f$ satisfies \eqref{Lip_functions} with some $\delta > 0$. 
	To prove (i), let $x- \frac{\delta}{2} < y < z < x+\frac{\delta}{2} $, then  
	%and let $0 \le i \le k < n$  such that $x-\delta < x_{i} \le x \le x_{i+1} $\, , $x_k \le z \le x_{k+1} < x+\delta$
	$$
	v_f(z)-v_f(y) = V_{y}^{z} (f) %\le  V_{x_i}^{x^{k+1}} 
	= \sup_{\chi} \sum_{j=0}^{n-1} \rho (f(x_{j}), f(x_{j+1}) ) 
	\le \sup_{\chi} \sum_{j=0}^{n-1} \cal{L}|x_{j+1}-x_j| =\cal{L}|y-z|,
	$$
	where the supremum is taken over all partitions ${\chi = \{y=x_0 < \cdots <x_n=z \} }$ of $[y,z]$.  
%	The proof in the case when $z \in \left(x - \frac{\delta}{2}, x \right)$  is similar.

\noindent By (i) and Remark~\ref{f_AroundLip_then_LocalLIp} we obtain (ii).
\end{proof}

\begin{lemma}\label{Lip_2}
	Let $f \in \BV$.
\begin{enumerate}
	\item [(i)] If $v_f$ is locally Lipschitz around a point $x$ with some $\cal L >0$ and $\delta >0$, then $f$ is locally Lipschitz around $x$ with the same $\cal L$ and $\delta$.
	\item [(ii)] If $v_f \in \LipAtPoint{x}{ \cal{L} }$, then $f \in \LipAtPoint{x}{ \cal{L} }$. 
\end{enumerate}
\end{lemma}
\begin{proof} 
	(i) %Since $v_f$ is  Lipschitz around a point $x$, there exists $\delta>0$ such that $v_f(z)- v_f(y)| \leq {\cal L} |y-z|$, $\forall z,y$ s.t. 
	For all $z,y$ such that ${x- \frac{\delta}{2} < y < z < x+\frac{\delta}{2} }$ we have
\begin{equation}\label{number_1}
	\rho (f(z), f(y)) \le V_{y}^{z} (f) = v_f(z)- v_f(y) \le \cal{L} |z-y|\,. % ,  \; h \in \left (0, \frac{\delta}{2} \right ) .
\end{equation}
To prove (ii) we replace either $y$ or $z$  in~\eqref{number_1} by $x$.

\end{proof}

\begin{remark}
	The inverse implication of Lemma~\ref{Lip_2} (ii) does not hold. Consider, for example, the function $f(x) = x\sin(1/x)$ for $x\neq 0$ and $f(0)=0$.  
	Clearly, $f \in \LipAtPoint{0}{ \cal{L} }$ with $\cal{L}=1$, but $v_f \notin \LipAtPoint{0}{ \cal{L} }$ for any $\cal{L} >0$.
\end{remark}

\noindent In the following we consider ${f:[a,b]\rightarrow \Rd}$,  $f = \begin{pmatrix} f_1\\ \vdots \\ f_d \end{pmatrix}$. We recall that $|\cdot|$ is a fixed norm on~$\Rd$. 

\begin{lemma}\label{lemma_IntegralProp}
 Let ${f:[a,b]\rightarrow \Rd}$ be Riemann integrable. Then 
$$
\left | \int_a^b f(x)dx \right | \le  \int_a^b | f(x) |dx .
 $$
\end{lemma}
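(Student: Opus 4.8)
The plan is to reduce the inequality to the triangle inequality for the fixed norm $|\cdot|$ applied to Riemann sums, and then pass to the limit. First I would record that both sides are well defined. Since $f:[a,b]\to\Rd$ is Riemann integrable, each coordinate $f_k$ is Riemann integrable, hence bounded and continuous outside a set of Lebesgue measure zero. As the norm $|\cdot|$ is continuous on $\Rd$, the scalar function $x \mapsto |f(x)|$ is continuous at every point of continuity of $f$, so it too is continuous almost everywhere; together with its boundedness this gives that $|f|$ is Riemann integrable. Thus $\int_a^b |f(x)|\,dx$ makes sense, while $\int_a^b f(x)\,dx \in \Rd$ is defined coordinatewise.

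Next I would fix a sequence of tagged partitions $\chi_n = \{x_0 < \cdots < x_{m_n}\}$ with $|\chi_n| \to 0$ and tags $\xi_i \in [x_i, x_{i+1}]$, and form the vector Riemann sum $S_n = \sum_i f(\xi_i)(x_{i+1}-x_i)$. Since every increment $x_{i+1}-x_i$ is nonnegative, the triangle inequality and positive homogeneity of the norm yield
$$
|S_n| = \Big| \sum_i f(\xi_i)(x_{i+1}-x_i) \Big| \le \sum_i |f(\xi_i)|\,(x_{i+1}-x_i) =: T_n,
$$
where $T_n$ is precisely the scalar Riemann sum of $|f|$ over the same tagged partition.

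Finally I would let $n \to \infty$. Because $\int_a^b f$ is defined coordinatewise, $S_n \to \int_a^b f$ in $\Rd$, and continuity of $|\cdot|$ gives $|S_n| \to \big|\int_a^b f\big|$; by the integrability of $|f|$ established above, $T_n \to \int_a^b |f|$. Passing to the limit in $|S_n| \le T_n$ gives the claim. The only genuinely delicate point is the integrability of $|f|$ and the convergence of both families of Riemann sums to their respective integrals along the chosen partitions; once these are in hand, the estimate is just the pointwise triangle inequality. I note that an equally short route proceeds by duality: choosing a linear functional $\phi$ on $\Rd$ with dual norm one and $\phi\big(\int_a^b f\big) = \big|\int_a^b f\big|$, and using that $\phi$ commutes with the (coordinatewise) integral together with $\phi(u) \le |u|$, one obtains
$$
\Big|\int_a^b f\Big| = \phi\Big(\int_a^b f\Big) = \int_a^b \phi(f(x))\,dx \le \int_a^b |f(x)|\,dx;
$$
I would present the Riemann-sum argument as the self-contained one, since it avoids invoking Hahn--Banach duality.
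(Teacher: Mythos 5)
Your argument is correct and is essentially the paper's own proof: both apply the triangle inequality to vector-valued Riemann sums over a sequence of partitions with mesh tending to zero and then pass to the limit. You additionally justify the Riemann integrability of $x \mapsto |f(x)|$ (via Lebesgue's criterion and continuity of the norm), a point the paper's proof leaves implicit, so nothing further is needed.
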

\begin{proof}

Consider a sequence of partitions $\chi_n = \{a=x_0^n < \cdots <x_n^n =b \}$ with $\displaystyle \lim_{n \to \infty }|\chi_n|=0$. Take some  $c_i^n \in [x_{i-1}^n, x_i^n]$,  $i=1,\ldots,n$. Using the triangle inequality in the estimate for the Riemann sums we obtain
$$%\begin{align*}
	\left | \int_a^b f(x)dx \right | =  \left | \lim_{n \to \infty } \sum_{i = 1}^n f(c_i^n)(x_i^n-x_{i-1}^n)\right | 
	\le  \lim_{n \to \infty } \sum_{i = 1}^n | f(c_i^n)| (x_i^n-x_{i-1}^n) =  \int_a^b |f(x)|dx .
$$%\end{align*}

\end{proof}

We recall the notion of integral moduli  of continuity for functions with values in $\Rd$.
We extend a function ${f:[a,b]\rightarrow \Rd}$ outside $[a,b]$ in a simple way preserving its variation on $[a,b]$
$$
%\begin{equation}\label{extent_f}
	f(x)= \left \{ \begin{array}{ll}
		f(a), & x<a, \\
		f(x), & a \le x \le b, \\
		f(b), & x>b.
	\end{array}
	\right.
%\end{equation}
$$
For ${f:[a,b]\rightarrow \Rd}$, $f = \begin{pmatrix} f_1\\ \vdots \\ f_d \end{pmatrix}$,  we write  $f \in L^1[a,b]$ if all its components $f_i$, $i=1,\ldots,d$, are in $ L^1[a,b]$. 

\noindent The distance between two functions $f,g \in L^1[a,b]$ is given by
$$ 
\|f-g\|_{L^1}=\int_a^b |f(x)-g(x)|dx.
$$
We denote $ \displaystyle \|f \|_{\infty} = \sup_{x\in[a,b]} |f(x)|$.

The first order integral modulus of continuity of ${f:[a,b]\rightarrow \Rd}$, $f \in L^1[a,b]$ is 
$$
\vartheta_{}(f, \delta)= \vartheta_{1}(f, \delta) = \sup_{0<h\le \delta} \int_{a}^{b} |f(x+h)-f(x) | dx ,
\quad \delta > 0.
$$
The second order integral modulus of continuity is 
$$
\vartheta_{2}(f, \delta) = \sup_{0<h\le \delta}  \int_{a}^{b} |f(x+h)-2f(x)+f(x-h) | dx ,
\quad \delta > 0.
$$
It is easy to see that
\begin{equation}\label{relation_moduli}
	\vartheta_{2}(f, \delta) \le 2  \vartheta_{}(f, \delta) .
\end{equation}
Using~\eqref{IntegralOfVariation} one can easily obtain that for $f \in \BV$
\begin{equation}\label{estimate_modulu_p=1}
	\ModulContPOne{f}{\delta} \le \delta V_a^b(f) .
\end{equation}

\subsection {Metric selections and the weighted metric integral of multifunctions}\label{Sect_MS_MetIntegral}
We consider set-valued functions (SVFs, multifunctions) mapping a compact interval $[a,b] \subset \R$ to $\Comp$. 
The graph of a multifunction $F$ is the set of points in $\R^{d+1}$ 
$$
\Graph(F)= \left \{(x,y) \ : \ y\in F(x),\; x \in [a,b] \right \}.
$$
It is easy to see that if $F \in  \BV$ then $\Graph(F)$ is a bounded set and 
$$
\|F\|_\infty =  \sup_{x\in[a,b]} | F(x) | <  \infty . 
$$
We denote the class of SVFs of bounded variation with compact graphs by~$\calF$. 

%\noindent In accordance with~\eqref{def_UpperKuratLim}, for a set-valued function $F:[a,b]\to \Comp$  the upper Kuratowski limit at $\widetilde{x} \in [a,b]$ is 

%$$
%\limsup_{x \to \widetilde{x}} F(x) = \left\{y \ : \  \exists \, \{x_k\}_{k=1}^{\infty} \subset [a,b] \ \text{with} \  x_k\to\widetilde{x} \ \exists \, \{y_k\}_{k=1}^{\infty} \ \text{with} \ y_k\in F(x_k),  k\in\N, \ \text{and} \ y_k \to y
%\right\}.
%$$

For a set-valued function $F : [a,b] \to \Comp$, a single-valued function ${s:[a,b] \to \Rd}$ such that $s(x) \in F(x)$ for all $x \in [a,b]$ is called a selection of~$F$.

The notions of the metric selections and of the weighted metric integral are central in our work. We recall their definitions. 

Given a multifunction $F: [a,b] \to \Comp$, a partition $\chi=\{x_0,\ldots,x_n\} \subset [a,b]$, $a=x_0 < \cdots < x_n=b$, and a corresponding metric chain $\phi=(y_0,\ldots,y_n) \in \CH \left ( F(x_0),\ldots,F(x_n) \right )$ (see Definition~\ref{Def_MetChain_MetSelection}), the \textbf{chain function} based on $\chi$ and $\phi$ is
%$$
\begin{equation}\label{def_ChainFunc}
	c_{\chi, \phi}(x)= \left \{ \begin{array}{ll}
		y_i, & x \in [x_i,x_{i+1}), \quad i=0,\ldots,n-1,\, \\
		y_n, & x=x_n.
	\end{array}
	\right.
\end{equation}
%$$

A selection $s$ of $F$ is called a \textbf{metric selection}, if there is a sequence of chain functions $\{ c_{\chi_k, \phi_k} \}_{k \in \N}$ of~$F$ with ${\lim_{k \to \infty} |\chi_k| =0}$ such that
$$
s(x)=\lim_{k\to \infty} c_{\chi_k, \phi_k}(x) \quad \mbox{pointwisely on} \ [a,b].
$$
We denote the set of all metric selections of $F$ by $\setMS$.

Note that the definitions of chain functions and metric selections imply that a metric selection $s$ of a multifunction $F$ is constant in any open interval where the graph of $s$ stays in the interior of  $\Graph(F)$.

Below we quote some results from~\cite{DFM:MetricIntegral} and~\cite{BDFM:2021} which are used in this paper.

	\begin{result}\label{Result_MetrSel_InheritVariation}\cite[Theorem~3.6]{DFM:MetricIntegral}
		
		Let $s$ be a metric selection of $F \in \calF$. Then $V_a^b(s) \le V_a^b(F)$ and $\|s\|_\infty\le \|F\|_\infty$.
	\end{result}
	\begin{result}\label{Result_MetSel_ThroughAnyPoint_Repres}\cite[Corollary~3.7]{DFM:MetricIntegral}
		
		Let $F \in \calF$. Through any point $\alpha \in \Graph(F)$ there exists a metric selection which we denote by~${\,s_\alpha}$.
		Moreover,  $F$ has a representation by metric selections, namely
		$$
		F(x) = \{ s_\alpha(x) \ :\ \alpha \in \Graph(F)\}.
		$$
	\end{result}
	\begin{result}\label{Result_LocModuli_s<=LocModuli_v_F}\cite[Theorem~4.9]{BDFM:2021}
		
		Let $F \in \calF$, $s$ be a metric selection of $F$  and $x^*\in[a,b]$. Then
		$$
		\LocalModulCont{s}{x^*}{\delta} \le \LocalModulCont{v_F}{x^*}{ 2\delta}, \quad \delta >0.
		$$
		In particular, if $F$ is continuous at $x^*$, then $s$ is continuous at $x^*$.
	\end{result}
	\begin{result}\label{Result-limit_of_MS_isMS} \cite[Theorem~4.13]{BDFM:2021}
		
		For $F\in \calF$, the pointwise limit (if exists) of a sequence of metric selections of~$F$ is a metric selection~of~$F$.
	\end{result}
	
	\begin{comment}	
		\begin{result}\label{Result_LeftModulu_MetricSelection} \cite[Theorem~4.7]{BDFM:2021} \;
			Let $F \in \calF$ and $s \in \mathcal{S}(F)$. Then
			$$
			\LeftLocalModul{s}{x^*}{\delta} \le \LeftLocalModul{v_F}{x^*}{2\delta}, \;\, \delta >0, \; x^* \in (a,b] \quad , \quad
			%		\NewRightLocalModul{v_s}{x^*}{\delta} \le \NewRightLocalModul{v_F}{x^*}{\delta}, \;\, \delta >0,  \; x^* \in [a,b).
			$$
			In particular, if $F$ is left continuous at $x^*$, then s is left continuous at $x^*$.
		\end{result}
	\end{comment}
	\begin{result}\label{Result_QuasiModulu_MetricSelection}\cite[Lemmas~6.7, 6.8]{BDFM:2021}
		
		Let $F \in \calF$ and $\delta > 0$.  Then any metric selection $s \in \mathcal{S}(F)$ satisfies
		$$
		\NewLeftLocalModul{v_s}{x^*}{\delta} \le \NewLeftLocalModul{v_F}{x^*}{2\delta},  \; x^* \in (a,b], \qquad
		\NewRightLocalModul{v_s}{x^*}{\delta} \le \NewRightLocalModul{v_F}{x^*}{\delta}, \;  x^* \in [a,b).
		$$
	\end{result}
A direct consequence of Lemma~\ref{Lemma_QuasiMod_f<=QuasiMod_v_f} and Result~\ref{Result_QuasiModulu_MetricSelection} is
\begin{corol}\label{corol_QuasiMod_s<=QuasiMod_v_F}
	Let $F \in \calF$, $s \in \mathcal{S}(F)$ and $\delta > 0$.  Then
$$
	\NewLeftLocalModul{s}{x^*}{\delta} \le \NewLeftLocalModul{v_F}{x^*}{2\delta}, \; x^* \in (a,b], \qquad 
\NewRightLocalModul{s}{x^*}{\delta} \le \NewRightLocalModul{v_F}{x^*}{\delta},  \; x^* \in [a,b).
$$
\end{corol}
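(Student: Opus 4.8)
The plan is to obtain both inequalities by chaining Lemma~\ref{Lemma_QuasiMod_f<=QuasiMod_v_f} with Result~\ref{Result_QuasiModulu_MetricSelection}, so the only preliminary point to settle is that these auxiliary results genuinely apply to the metric selection $s$. Since $F \in \calF$ and $s \in \setMS$, Result~\ref{Result_MetrSel_InheritVariation} gives $V_a^b(s) \le V_a^b(F) < \infty$, hence $s \in \BV$; this is exactly the hypothesis needed so that Lemma~\ref{Lemma_QuasiMod_f<=QuasiMod_v_f} can be invoked with $f = s$, and in particular the variation function $v_s$ is well defined.

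For the left quasi-modulus I would fix $x^* \in (a,b]$ and $\delta > 0$ and first apply Lemma~\ref{Lemma_QuasiMod_f<=QuasiMod_v_f} to $s$, which yields
$$
\NewLeftLocalModul{s}{x^*}{\delta} \le \NewLeftLocalModul{v_s}{x^*}{\delta}.
$$
Then I would invoke the left half of Result~\ref{Result_QuasiModulu_MetricSelection}, which compares $v_s$ with $v_F$ at the cost of doubling the argument,
$$
\NewLeftLocalModul{v_s}{x^*}{\delta} \le \NewLeftLocalModul{v_F}{x^*}{2\delta},
$$
and composing the two displayed bounds gives exactly $\NewLeftLocalModul{s}{x^*}{\delta} \le \NewLeftLocalModul{v_F}{x^*}{2\delta}$.

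For the right quasi-modulus the argument is identical in structure: for $x^* \in [a,b)$, Lemma~\ref{Lemma_QuasiMod_f<=QuasiMod_v_f} gives $\NewRightLocalModul{s}{x^*}{\delta} \le \NewRightLocalModul{v_s}{x^*}{\delta}$, and the right half of Result~\ref{Result_QuasiModulu_MetricSelection} gives $\NewRightLocalModul{v_s}{x^*}{\delta} \le \NewRightLocalModul{v_F}{x^*}{\delta}$, this time with no scaling of $\delta$. Chaining these delivers the second claimed inequality.

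Since everything reduces to a direct composition of already-proved estimates, I do not expect a genuine obstacle here. The only thing requiring care is the bookkeeping of the arguments — keeping the factor $2\delta$ on the left-hand comparison and the bare $\delta$ on the right-hand one, exactly as dictated by Result~\ref{Result_QuasiModulu_MetricSelection} — together with checking that the endpoint restrictions $x^* \in (a,b]$ and $x^* \in [a,b)$ are respected by both cited statements, which indeed they are.
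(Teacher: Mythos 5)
Your proof is correct and matches the paper exactly: the corollary is stated there as a direct consequence of Lemma~\ref{Lemma_QuasiMod_f<=QuasiMod_v_f} applied to $s$ followed by Result~\ref{Result_QuasiModulu_MetricSelection}, which is precisely your chain of inequalities. Your additional remark that $s \in \BV$ (via Result~\ref{Result_MetrSel_InheritVariation}) justifies invoking the lemma is a sensible bit of bookkeeping the paper leaves implicit.
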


Now we show that the metric selections of $F$ inherit the Lipschitz regularity property at $x$ from $v_F$.

\begin{lemma}\label{Lip_3}
	Let $F \in \calF$. If $v_F \in \LipAtPoint{x}{ \cal{L} }$ then $s \in \LipAtPoint{x}{4 \cal{L} }$ for all metric selections $s \in \cal{S}(F)$.
\end{lemma}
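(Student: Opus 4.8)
The plan is to transfer the Lipschitz behaviour of $v_F$ at $x$ to the metric selection $s$ through the comparison of \emph{local moduli of continuity} supplied by Result~\ref{Result_LocModuli_s<=LocModuli_v_F}, namely $\LocalModulCont{s}{x}{\delta} \le \LocalModulCont{v_F}{x}{2\delta}$. The guiding idea is that the hypothesis $v_F \in \LipAtPoint{x}{\cal{L}}$ controls $v_F$ on a whole neighbourhood of $x$, hence controls its local modulus there, and the quoted inequality then transfers that control to $s$.

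First I would unpack the hypothesis: by Definition~\ref{Def_Lip}(b), since $v_F \in \LipAtPoint{x}{\cal{L}}$ there is $\delta_0 > 0$ such that $|v_F(w) - v_F(x)| \le \cal{L}|w - x|$ for all $w \in (x-\delta_0/2,\,x+\delta_0/2)\cap[a,b]$. Fix a metric selection $s \in \cal{S}(F)$ and a point $z \in [a,b]$, and write $r = |z - x|$. The key device is to choose the modulus window as a fixed multiple of $r$: taking $\delta = 2r$, both $x$ and $z$ lie in $[x-\delta/2,\,x+\delta/2] = [x-r,\,x+r]$, so the definition of the local modulus gives $|s(z) - s(x)| \le \LocalModulCont{s}{x}{2r}$, and Result~\ref{Result_LocModuli_s<=LocModuli_v_F} upgrades this to $|s(z)-s(x)| \le \LocalModulCont{v_F}{x}{4r}$.

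Next I would estimate $\LocalModulCont{v_F}{x}{4r}$, which is the supremum of $|v_F(x_1)-v_F(x_2)|$ over $x_1,x_2 \in [x-2r,\,x+2r]\cap[a,b]$. Provided $r$ is small enough that $2r < \delta_0/2$, this window sits inside the Lipschitz neighbourhood of $x$, so a triangle inequality through $v_F(x)$ yields $|v_F(x_1)-v_F(x_2)| \le |v_F(x_1)-v_F(x)| + |v_F(x)-v_F(x_2)| \le \cal{L}|x_1-x| + \cal{L}|x_2-x| \le 4\cal{L}r$. Combining, $|s(z)-s(x)| \le 4\cal{L}r = 4\cal{L}|z-x|$ for every $z$ with $|z-x| < \delta_0/4$; choosing $\delta' = \delta_0/2$ in Definition~\ref{Def_Lip}(b) then shows $s \in \LipAtPoint{x}{4\cal{L}}$.

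The main obstacle is not conceptual but a matter of careful bookkeeping of the two factors that produce the constant $4$: one factor $2$ comes from Result~\ref{Result_LocModuli_s<=LocModuli_v_F}, and a second factor $2$ is forced by the two-point nature of the local modulus, since the hypothesis only bounds deviations of $v_F$ from the single base value $v_F(x)$. The delicate point is to scale $\delta$ with $|z-x|$ rather than fixing a single radius, so that the modulus window simultaneously contains both evaluation points and remains inside the neighbourhood on which the Lipschitz estimate for $v_F$ holds. I note that routing the argument through the one-sided quasi-moduli of Corollary~\ref{corol_QuasiMod_s<=QuasiMod_v_F}, together with the continuity of $s$ at $x$ inherited from $F$ via Result~\ref{Result_f_cont->v_f_cont} and Result~\ref{Result_LocModuli_s<=LocModuli_v_F}, would actually sharpen the constant to $2\cal{L}$; but the local-modulus route above already establishes the stated bound.
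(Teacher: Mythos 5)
Your proof is correct and follows essentially the same route as the paper's: both arguments convert the pointwise Lipschitz hypothesis on $v_F$ into the bound $\omega(v_F,x,\eta)\le \cal{L}\eta$ via the triangle inequality through $v_F(x)$, and then transfer it to $s$ by Result~\ref{Result_LocModuli_s<=LocModuli_v_F} with the modulus window scaled as a multiple of $|z-x|$, which is exactly where the factor $4$ arises in the paper as well. Your closing observation about sharpening the constant via the one-sided quasi-moduli is a reasonable aside but is not needed for the stated claim.
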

\begin{proof} 
	Since $v_F \in \LipAtPoint{x}{ \cal{L} }$, there is $\delta>0$ such that ${|v_F(x)- v_F(z)| \leq {\cal L} |x-z|}$, ${\forall z \in \left(x-\frac{\delta}{2}, x+\frac{\delta}{2}\right) \cap [a,b]}$, implying that $\omega(v_F, x, \eta) \le \cal{L} \eta$ for $\eta \le \delta$. Thus by~Result~\ref{Result_LocModuli_s<=LocModuli_v_F} we have for all $z \in \left(x-\frac{\delta}{4}, x+\frac{\delta}{4}\right) \cap [a,b]$
	$$
	|s(x)-s(z)|  \le \omega(s,x,2|x-z|) \le \omega(v_F,x,4|x-z|) \le 4\cal{L} |x-z| .
	$$
\end{proof}

The metric integral of SVFs is introduced in~\cite{DFM:MetricIntegral} and extended to the weighted metric integral in~\cite{BDFM:2021}. We recall its definition.

For a multifunction $\Map{[a,b]}$, a weight function $\kappa:[a,b] \to \R$ and for a partition ${ \chi=\{x_0,\ldots,x_n\} }$, ${a=x_0<x_1<\cdots < x_n=b} $, we define the weighted metric Riemann sum of $F$ by
\begin{align*}
	{\scriptstyle(\cal M_\kappa)} S_{\chi} F & = \left\{ \sum_{i=0}^{n-1} (x_{i+1}-x_i)\kappa(x_i) y_i : \ (y_0,\ldots,y_{n-1}) \in \CH(F(x_0),\ldots,F(x_{n-1})) \right\} \\
	& = \bigoplus_{i=0}^{n-1}(x_{i+1}-x_i)\kappa(x_i) F(x_i).
\end{align*}

The \textbf{weighted metric integral} of $F$ with the weight function $\kappa$ is defined as the upper Kuratowski limit of weighted metric Riemann sums
	$$
	\WeightedMetInt  = \limsup_{|\chi| \to 0} {\scriptstyle(\cal M_\kappa)} S_{\chi} F.
	$$
The set $\WeightedMetInt $ is non-empty whenever the set-valued function $\kappa F$ has a bounded range.

\begin{result}\label{Result_W-MetrInt=W-IntOfMetrSel}~\cite{BDFM:2021}
	Let $F \in \calF$ and  $\kappa \in \BV$. Then the set $\WeightedMetInt$ is compact and
	$$
	\WeightedMetInt=\left \{ \int_{a}^{b}\kappa(x)s(x)dx \ : \ s \in\mathcal{S}(F) \right \}.
	$$
\end{result}
\noindent Here and in such occasions below we understand that the integral is applied to each component of $s = \begin{pmatrix} s_1 \\ \vdots \\ s_d \end{pmatrix}$.

%%%%%%%%%%%%%%%%%%%%%%%%%%%%%%%%%%%%%%%%%%%%%%%%%%%%%%%%%%%%%%%%%%%%%%%%%%%%%%%%%%%%%%%%%%%%%%%%%%%%%%

\section {Rate of pointwise convergence of integral operators for real-valued functions}\label{Sect_Convergence_RealValued}
%%===========================================================================================
Let $\{\cal K_n(x,t)\}_{n \in \N}$, ${\cal K}_n: [a,b] \times [a,b] \to \R$, be a sequence of functions that are integrable with respect to $t$ for each $x$. 
We term the functions $\cal K_n$ kernels. With the help of the sequence of the kernels we define a sequence of linear integral operators $\{T_n\}_{n \in \N}$  on real-valued functions in $L^\infty[a,b]$ by

\begin{equation} \label{eq:Def_Operator}
	T_nf(x) = \int_a^b \cal K_n(x,t)  f(t)  dt, \quad x \in [a,b], \quad n \in \N.
\end{equation}

\begin{remark}\label{Remark_Integral_Vector_Valued}
If $f: [a,b] \to \Rd$, then $T_nf$ is obtained by the operation of $T_n$ on each component of $f$.
\end{remark}
\medskip

%\noindent 
Next we introduce the following notation. For $x \in [a,b]$ let
$$
\alpha_n(x) =  \left| \int_a^b \cal K_n(x,t) dt - 1 \right| % , \qquad  \alpha_n = \sup{ \{ \alpha_n(x) \ : \ x \in [a,b]  \}}
$$
and
$$
\beta_n(x,\delta) = \int_{|x-t| \ge \delta} |\cal K_n(x,t)| dt , \quad \delta > 0. % , \qquad  \beta_n(\delta) = \sup{ \{ \beta_n(x,\delta) \ : \ x \in [a,b] \} }, \quad \delta > 0.
$$
Furthermore, for each $x \in [a,b]$ let $M(x) \in [0,\infty]$ be  such that
\begin{equation} \label{eq:M(x)bound}
	\int_a^b |\cal K_n(x,t)| dt \le M(x), \quad n \in \N.
\end{equation}

%%%%%%%%%%%%%%%%%%%%%%%%%%%%%%%%%%%%%%%%%%%%%%%%%%%%%%%%%%%%%%%%%%%%%%%%%%%%%%%%%%%%%
\subsection{The case of continuity points}

The theorem below can be considered as a refinement of Theorem 2.1 in \cite[Chapter 1]{DeVoreLorentz:ConstAppr}. 
For that we need the following result~\cite[page~12]{Korovkin:LOandAT}
\begin{lemma}\label{Lemma_Korovkin}
	If the function  $f$ is defined on $[a,b]$ and $I \subseteq [a,b]$ is a closed interval such that $f$ is continuous at all the points of $I$,
	then for any $\epsilon>0$ there exists  $\delta>0$ such that 
	$$
	|f(y)-f(x)| < \epsilon
	$$
	for all $x \in I$, $y \in [a,b]$ and \ $|y-x|<\delta$.
\end{lemma}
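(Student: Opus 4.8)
The plan is to reduce the statement to a compactness argument on the closed (hence compact) interval $I$, using in an essential way that the hypothesis gives continuity of $f$ at each point of $I$ \emph{as a function on the full interval} $[a,b]$, not merely continuity of the restriction $f|_I$. First I would fix $\epsilon>0$. For each $x\in I$, continuity of $f$ at $x$ yields a radius $\delta_x>0$ such that
$$
|f(y)-f(x)| < \frac{\epsilon}{2} \quad \text{for all } y\in[a,b] \text{ with } |y-x|<\delta_x .
$$
The crucial point is that $y$ here is allowed to range over all of $[a,b]$, which is exactly what we will need at the end.

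Next I would cover $I$ by the open intervals $\{(x-\delta_x/2,\, x+\delta_x/2)\}_{x\in I}$. Since $I$ is compact, a finite subfamily centered at points $x_1,\dots,x_m\in I$ already covers $I$, and I would set $\delta=\tfrac{1}{2}\min_{1\le k\le m}\delta_{x_k}>0$. Given any $x\in I$ and any $y\in[a,b]$ with $|y-x|<\delta$, choose $k$ with $|x-x_k|<\delta_{x_k}/2$. Then $|y-x_k|\le |y-x|+|x-x_k| < \delta + \delta_{x_k}/2 \le \delta_{x_k}$, so both $x$ and $y$ lie within $\delta_{x_k}$ of $x_k$ in $[a,b]$. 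Applying the defining inequality for $x_k$ twice and the triangle inequality gives
$$
|f(y)-f(x)| \le |f(y)-f(x_k)| + |f(x_k)-f(x)| < \frac{\epsilon}{2}+\frac{\epsilon}{2} = \epsilon,
$$
which is the claim.

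I do not expect a genuine obstacle here: this is the standard Heine--Cantor uniform-continuity mechanism. The only subtlety worth flagging is that one may \emph{not} invoke uniform continuity of $f$ on $I$ directly, since $f$ need not be continuous on all of $[a,b]$, and even uniform continuity on $I$ would only bound $|f(y)-f(x)|$ for $y\in I$, whereas the lemma demands $y\in[a,b]$. Keeping the neighborhoods relative to $[a,b]$ throughout the covering argument is precisely what makes the conclusion hold. As an alternative I would note the equivalent contradiction argument: if the conclusion failed, there would be $\epsilon>0$ and sequences $x_n\in I$, $y_n\in[a,b]$ with $|y_n-x_n|\to 0$ but $|f(y_n)-f(x_n)|\ge\epsilon$; passing to a subsequence with $x_n\to x^*\in I$ forces $y_n\to x^*$ as well, and continuity of $f$ at $x^*$ contradicts the gap $\epsilon$.
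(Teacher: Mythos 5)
The paper does not prove Lemma~\ref{Lemma_Korovkin}: it is quoted directly from Korovkin's book~\cite[page~12]{Korovkin:LOandAT} without an in-paper argument, so there is nothing of the authors' own to compare yours against. Your proof is correct and is the standard covering argument one would supply. The one genuinely important point --- that the continuity hypothesis furnishes, for each $x\in I$, a neighborhood condition valid for all $y\in[a,b]$ (not merely $y\in I$), which is exactly what lets the conclusion quantify $y$ over the whole interval --- is identified and handled properly; the halving of the radii, the choice $\delta=\tfrac{1}{2}\min_k\delta_{x_k}$, and the two applications of the triangle inequality through $x_k$ all check out, and the sequential-compactness variant you sketch at the end is an equally valid alternative.
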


\begin{theorem}\label{theorem:DeVoreLorentz}
Let $f:[a,b]\rightarrow \R$ be bounded and measurable 
on $[a,b]$,  $x \in [a,b]$ and let $M(x)$ be as in~\eqref{eq:M(x)bound}. Then\\ %the following statements hold:
\noindent {\bf (i)}\;  For all $\delta > 0$ and $n \in \N$
\begin{comment}
\begin{equation}\label{eq:estim_DeVoreLorentz}
	|T_nf(x) - f(x)| \le {\color{red}\widehat{\omega}(f,x,2\delta)} M(x) + \|f\|_\infty (2\beta_n(x,\delta) + \alpha_n(x)).
\end{equation}
\end{comment}
\begin{equation}\label{eq:estim_DeVoreLorentz}
	|T_nf(x) - f(x)| \le \omega(f,x,2\delta) M(x) + \|f\|_{\infty} 2\beta_n(x,\delta) + |f(x)|\alpha_n(x).
\end{equation}

\noindent {\bf (ii)}\; If $\displaystyle \lim_{n \to \infty} \alpha_n(x) = 0$, $\displaystyle \lim_{n \to \infty} \beta_n(x,\delta) = 0$  
for any sufficiently small $\delta > 0$, $M(x) < \infty$, and if $x$ is a point of continuity of~$f$, then
\begin{equation}\label{eq:converg_DeVoreLorentz}
\lim_{n \to \infty} {| T_nf(x) - f(x)|} =0. 
\end{equation}
%\noindent {\bf (iii)}\; If $f$ is continuous in $[a,b]$, if $\displaystyle \lim_{n \to \infty} \alpha_n(x) = 0$ and $\displaystyle \lim_{n \to \infty} 
%\beta_n(x,\delta) = 0$ uniformly in $x\in [a,b]$ for any sufficiently small $\delta > 0$,  and if $M(x)$ is bounded on $[a,b]$, then  the convergence is uniform in $[a,b]$.\\

\noindent {\bf (iii)}\; If $f$ is continuous at all points of a closed interval $I\subseteq [a,b]$, 
if $\displaystyle \lim_{n \to \infty} \alpha_n(x) = 0$ and ${\displaystyle \lim_{n \to \infty} \beta_n(x,\delta) = 0}$ 
uniformly in $x\in I$ for any sufficiently small\, $\delta > 0$, and if $M(x)$ is bounded on $I$, 
then the convergence is uniform in~$I$.
\end{theorem}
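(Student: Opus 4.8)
The plan is to treat the three parts in order, with part (i) serving as the workhorse from which (ii) and (iii) follow by limiting arguments. For (i) I would start from the decomposition
$$
T_n f(x) - f(x) = \int_a^b \mathcal{K}_n(x,t)\big(f(t)-f(x)\big)\,dt + f(x)\left(\int_a^b \mathcal{K}_n(x,t)\,dt - 1\right),
$$
obtained by writing $f(x) = f(x)\int_a^b \mathcal{K}_n(x,t)\,dt - f(x)\big(\int_a^b \mathcal{K}_n(x,t)\,dt - 1\big)$. The second summand is immediately bounded in absolute value by $|f(x)|\,\alpha_n(x)$. For the first summand I would split the domain of integration into $\{|x-t|<\delta\}$ and $\{|x-t|\ge \delta\}$. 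On the near region every such $t$ lies in $[x-\delta,x+\delta]$ together with $x$, so $|f(t)-f(x)| \le \omega(f,x,2\delta)$ (the factor $2\delta$ arising precisely from the half-length convention in the definition of the local modulus); pulling this constant out leaves $\int_{|x-t|<\delta}|\mathcal{K}_n(x,t)|\,dt \le M(x)$. On the far region I would bound $|f(t)-f(x)| \le 2\|f\|_\infty$ and recognize the remaining integral as $\beta_n(x,\delta)$. Adding the three contributions yields exactly \eqref{eq:estim_DeVoreLorentz}.

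Part (ii) is a two-step limit argument applied to the bound from (i). Since $f$ is continuous at $x$, $\omega(f,x,2\delta)\to 0$ as $\delta\to 0^+$; given $\eps>0$ and using $M(x)<\infty$, I would first fix $\delta$ so small that $\omega(f,x,2\delta)\,M(x) < \eps/2$. With $\delta$ now frozen, the hypotheses $\beta_n(x,\delta)\to 0$ and $\alpha_n(x)\to 0$ force the last two terms $\|f\|_\infty\,2\beta_n(x,\delta) + |f(x)|\,\alpha_n(x)$ to tend to $0$, hence to drop below $\eps/2$ for all large $n$, giving \eqref{eq:converg_DeVoreLorentz}.

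For part (iii) the same bound is used, but every quantity must be controlled uniformly in $x\in I$. The crucial preliminary step, and the one I expect to be the only genuine obstacle, is to upgrade pointwise continuity on $I$ into a uniform smallness of the local modulus, namely $\sup_{x\in I}\omega(f,x,2\delta)\to 0$ as $\delta\to 0^+$. This is exactly where Lemma~\ref{Lemma_Korovkin} enters: given $\eps>0$ it produces $\delta_0>0$ with $|f(y)-f(x)|<\eps$ whenever $x\in I$, $y\in[a,b]$, $|y-x|<\delta_0$. Then for any $\delta\le\delta_0/2$ and any $x\in I$, every pair $x_1,x_2\in[x-\delta,x+\delta]\cap[a,b]$ satisfies $|x_i-x|\le\delta<\delta_0$, whence $|f(x_1)-f(x_2)|\le|f(x_1)-f(x)|+|f(x)-f(x_2)|<2\eps$; taking the supremum gives $\omega(f,x,2\delta)\le 2\eps$ for every $x\in I$. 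With this uniform modulus bound in hand I would fix $\delta$ accordingly, use $\sup_{x\in I}M(x)<\infty$ to control the first term of \eqref{eq:estim_DeVoreLorentz} uniformly, and then invoke the uniform convergence of $\alpha_n$ and of $\beta_n(\cdot,\delta)$ on $I$ to make the remaining two terms uniformly small for all large $n$. Since $\|f\|_\infty$ and $\sup_{x\in I}|f(x)|$ are finite constants, the bound from (i) becomes uniform in $x\in I$, which establishes the uniform convergence.
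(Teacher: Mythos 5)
Your proposal is correct and follows essentially the same route as the paper: the identical decomposition of $T_nf(x)-f(x)$ into the modulus term, the tail term, and the $\alpha_n$ term for (i), the same two-step limiting argument for (ii), and the same use of Lemma~\ref{Lemma_Korovkin} to make the local modulus small uniformly on $I$ for (iii). The only differences are presentational (an $\eps/2$ argument in place of the paper's $\limsup$ formulation, and a more detailed derivation of the uniform modulus bound).
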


\begin{proof}
${}$	\\
\noindent {\bf (i)}\; We have 
\begin{align*}
|T_nf(x) - f(x)| & = \left| \int_a^b \cal K_n(x,t)  f(t)  dt  - \int_a^b \cal K_n(x,t)  f(x)  dt + f(x) \int_a^b \cal K_n(x,t)  dt - f(x) \right| \\
& \le \int_a^b |\cal K_n(x,t)| |f(t) - f(x)| dt + |f(x)| \left| \int_a^b \cal K_n(x,t) dt - 1 \right| \\
& \le \int_{|x-t| \le \delta} |\cal K_n(x,t)| |f(t) - f(x)| dt  + \int_{|x-t| > \delta} |\cal K_n(x,t)| |f(t) - f(x)| dt + |f(x)| \alpha_n(x).
\end{align*} 
Using the fact that $|f(t) - f(x)| \le  \omega(f,x,2\delta) $ in the first integral and that $|f(t) - f(x)| \le 2 \|f\|_\infty$, we obtain 
\begin{align*}
|T_nf(x) - f(x)| & \le  \omega(f,x,2\delta) \int_{|x-t| \le \delta} |\cal K_n(x,t)| dt + 2 \|f\|_\infty \int_{|x-t| > \delta} |\cal K_n(x,t)| dt + |f(x)| \alpha_n(x) \\
& \le \omega(f,x,2\delta) M(x) + 2 \|f\|_\infty \beta_n(x,\delta) + |f(x)| \alpha_n(x) 
\end{align*}
which gives \eqref{eq:estim_DeVoreLorentz}. 

\noindent {\bf (ii)}\; Fix $\delta > 0$ and let $n \to \infty$ in  \eqref{eq:estim_DeVoreLorentz}. With the assumption of~(ii), it follows from~\eqref{eq:estim_DeVoreLorentz} that
$$
\limsup_{n \to \infty} {| T_nf(x) - f(x)|} \le \omega(f,x,2\delta)  M(x).
$$
Since this is valid for each $\delta > 0$ and since $\displaystyle  \lim_{\delta \to 0+}  \omega(f,x,2\delta)= 0$ when $x$ is a point of continuity, we obtain~\eqref{eq:converg_DeVoreLorentz}.

\noindent {\bf (iii)}\; By Lemma~\ref{Lemma_Korovkin}, 
$\omega(f,x,2\delta)$  tends to zero uniformly in $x \in I$ when  $\delta \to 0+$\  and it follows from~\eqref{eq:estim_DeVoreLorentz} that the convergence is uniform in $I$.
\end{proof}

	In view of Lemma~\ref{lemma_IntegralProp}, estimates in the proof of Theorem~\ref{theorem:DeVoreLorentz} can be repeated verbatim for $f: [a,b] \to \Rd$, $f \in \BV$, since such $f$ is Riemann integrable. We obtain

\begin{corol}\label{corol_VectorValued_Rieman}
	If $f: [a,b] \to \Rd$, $f \in \BV$ and $\cal K_n(x,\cdot) \in \BV$ for each $x\in[a,b]$, then the estimates in Theorem~\ref{theorem:DeVoreLorentz} hold.
\end{corol}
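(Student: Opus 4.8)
The plan is to reproduce the three parts of the proof of Theorem~\ref{theorem:DeVoreLorentz} essentially line by line, the only change being that the scalar absolute value is everywhere reinterpreted as the fixed norm $|\cdot|$ on $\Rd$. First I would check that the operator $T_nf$ of~\eqref{eq:Def_Operator} is well defined for $f\in\BV$: since $f\in\BV$ has components $f_i$ of bounded variation and $\cal K_n(x,\cdot)\in\BV$, each product $\cal K_n(x,\cdot)f_i(\cdot)$ is again of bounded variation (a product of two BV functions is BV), hence Riemann integrable. Thus $\int_a^b \cal K_n(x,t)f(t)\,dt$ exists componentwise, and $\int_a^b|\cal K_n(x,t)|\,dt$ is finite, so that $\alpha_n(x)$ and $\beta_n(x,\delta)$ retain their meaning. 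Moreover $f\in\BV$ is bounded, so $\|f\|_\infty<\infty$.

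The single place in the scalar proof that uses the field structure of $\R$ is the passage from $\bigl|\int_a^b \cal K_n(x,t)(f(t)-f(x))\,dt\bigr|$ to $\int_a^b|\cal K_n(x,t)|\,|f(t)-f(x)|\,dt$, which there is just $|\int g|\le\int|g|$ for a real integrable $g$. In the vector setting this is exactly the content of Lemma~\ref{lemma_IntegralProp}, applied to the $\Rd$-valued integrand $t\mapsto \cal K_n(x,t)(f(t)-f(x))$, whose components are again BV in $t$ and therefore Riemann integrable. With that substitution the identity
$$
T_nf(x)-f(x)=\int_a^b \cal K_n(x,t)\bigl(f(t)-f(x)\bigr)\,dt+f(x)\Bigl(\int_a^b\cal K_n(x,t)\,dt-1\Bigr)
$$
and the subsequent splitting of the domain into $\{|x-t|\le\delta\}$ and $\{|x-t|>\delta\}$ carry over unchanged. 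On the near part one still bounds $|f(t)-f(x)|\le\omega(f,x,2\delta)$, this local modulus being defined for functions into an arbitrary metric space, hence in particular for $f:[a,b]\to\Rd$ with the norm $|\cdot|$; on the far part one uses $|f(t)-f(x)|\le 2\|f\|_\infty$. These yield~\eqref{eq:estim_DeVoreLorentz} verbatim, and parts (ii) and (iii) then follow from it exactly as before, since their arguments manipulate only the scalar quantities $\omega(f,x,2\delta)$, $\alpha_n(x)$, $\beta_n(x,\delta)$, $M(x)$ and $\|f\|_\infty$, with Lemma~\ref{Lemma_Korovkin} applied to each component (or to the scalar map $x\mapsto\omega(f,x,2\delta)$) in the uniform statement.

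I do not expect a genuine obstacle: the statement is in essence the observation that the scalar proof is norm-robust. The only point requiring care is the applicability of Lemma~\ref{lemma_IntegralProp}, namely the Riemann integrability of the vector integrand, which is secured by the hypothesis $\cal K_n(x,\cdot)\in\BV$ together with $f\in\BV$. Once integrability is in place, the norm triangle inequality inside the Riemann sums---already isolated as Lemma~\ref{lemma_IntegralProp}---does all the work, and no estimate beyond those of Theorem~\ref{theorem:DeVoreLorentz} is needed.
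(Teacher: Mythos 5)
Your proposal is correct and follows essentially the same route as the paper: the paper justifies the corollary precisely by invoking Lemma~\ref{lemma_IntegralProp} to replace the scalar inequality $|\int g|\le\int|g|$ and then repeating the proof of Theorem~\ref{theorem:DeVoreLorentz} verbatim, noting that $f\in\BV$ is Riemann integrable. Your additional checks (that $\cal K_n(x,\cdot)f_i(\cdot)$ is BV, hence Riemann integrable, and that the local modulus is already defined for metric-space-valued functions) only make explicit what the paper leaves implicit.
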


\begin{remark}\label{remark_ContEstim_VectorValued}
It is easy to see that if $f: [a,b] \to \Rd$ is bounded and measurable, then~\eqref{eq:estim_DeVoreLorentz} becomes
$$
	|T_nf(x) - f(x)| \le C {\omega}(f,x,2\delta) M(x) + C \|f\|_\infty 2 \beta_n(x,\delta) + C |f(x)| \alpha_n(x), \quad x \in [a,b] , \; n \in \N , \; \delta >0,
$$
where $C$ depends only on the underlying norm $|\cdot|$ in $\Rd$.%, and where $\|f\|_\infty = \sup_{x\in [a,b]} |f(x)|$.
\end{remark}

%%%%%%%%%%%%%%%%%%%%%%%%%%%%%%%%%%%%%%%%%%%%%%%%%%%%%%%%%%%%%%%%%%%%%%%%%%%%%%%%%%%%%
\subsection{The case of discontinuity points}

In this section we follow and refine the analysis in~\cite{Guo:87} for real-valued functions.

Let $f: [a,b] \to \R$,  $f \in \BV$. Fix $x \in (a,b)$, following \cite{Guo:87} we introduce the function $g_x: [a,b] \to \R$,
$$
g_x(t) = \begin{cases}
	f(t) - f(x-), & t \in [a,x),\\
	f(t) - f(x) = 0, & t = x,\\
	f(t) - f(x+), & t \in (x,b].
\end{cases} 
$$
Note that $f \in \BV$ implies that $g_x$ is well defined and is continuous at $x$. It  is easy to check that
$$
f(t) = g_x(t) + f(x-) \chi_{[a,x)}(t) + f(x+) \chi_{(x,b]}(t) + f(x) \chi_{\{x\}}(t), \quad t \in [a,b],
$$
where $\chi_A$ denotes the characteristic function of a set $A$
$$
\chi_A(t) = \begin{cases}
	1, & t \in A,\\
	0, & t \not \in A.
\end{cases}
$$
A simple computation shows that for $t \in [a,b]$
\begin{align*}
	 f(t) & - \frac{1}{2}\big [ f(x+) + f(x-) \big ] = g_x(t) + \frac{1}{2}\big [f(x+) - f(x-)\big ] \big ( \chi_{(x,b]}(t) - \chi_{[a,x)}(t) \big ) \\
	& + 
	\left ( f(x) - \frac{1}{2} \big [ f(x+) + f(x-) \big ] \right ) \chi_{\{x\}}(t) \\
	& = g_x(t) + \frac{1}{2} \big [ f(x+) - f(x-) \big ] \sign(t - x) +  \left ( f(x) - \frac{1}{2} \big [f(x+) + f(x-) \big ] \right ) \chi_{\{x\}}(t),
\end{align*}
where
$$
\sign(t) = \begin{cases}
	1, & t > 0, \\
	0, & t = 0,\\
	-1, & t < 0.
\end{cases}
$$
Inserting this representation into the operator  \eqref{eq:Def_Operator}, we obtain
\begin{equation}\label{eq_1}
T_nf(x) - \frac{1}{2}[f(x+) + f(x-)] \int_a^b \cal K_n(x,t) dt 
= T_n g_x(x) +  \frac{1}{2} [f(x+) - f(x-)] T_n(\sign(\cdot - x))(x).
\end{equation}
Taking into account that $g_x(x) = 0$, the estimate \eqref{eq:estim_DeVoreLorentz} for $T_n g_x(x)$ takes the form
\begin{equation}\label{eq_2}
|T_n g_x (x)| = |T_n g_x(x) - g_x(x)| \le \omega(g_x,x,2\delta) M(x) + 2 \|g_x\|_\infty \beta_n(x,\delta).
\end{equation}

By the definition of the function $g_x$ we easily see that $\|g_x\|_\infty \le 2 \|f\|_\infty$. 
By the definition of the local modulus of continuity and the local quasi-moduli (see Section~\ref{Sect_Prelim_Regularuty}), we get
\begin{equation}\label{eq_3}
\omega(g_x,x,2\delta) \le 2 \max{ \{ \NewLeftLocalModul{f}{x}{\delta}, \NewRightLocalModul{f}{x}{\delta}\} }
= 2 \QuasiLocalModul{f}{x}{\delta}.  
\end{equation}
Clearly, $\frac{1}{2} \big | f(x+) - f(x-) \big | \le \|f\|_\infty$, since $f \in \BV$.
Combining this with ~\eqref{eq_1},~\eqref{eq_2},~\eqref{eq_3} %, using~\eqref{prop_quasi-moduli} and arguments similar to those in the proof of Theorem~\ref{theorem:DeVoreLorentz}, 
we arrive at 
\begin{equation}\label{eq:Long_estim_L_n_Jump}
\begin{split}
	& \left | T_nf(x) - \frac{1}{2}\big [f(x+) + f(x-) \big ] \int_a^b \cal K_n(x,t) dt  \right | \\
	%\nonumber \\
	& \le 2\QuasiLocalModul{f}{x}{\delta}  M(x) +
	\|f\|_\infty \left ( 4\beta_n(x,\delta) + \big| T_n(\sign(\cdot - x))(x) \big| \right ). 
	%	& +  \frac{1}{2} \big | f(x+) - f(x-) \big | \big| T_n(\sign(\cdot - x))(x) \big|.
\end{split}
\end{equation}
Thus we obtain the following result,

\begin{theorem}\label{theorem_Estim_T_n_Jump}
	Let $f: [a,b] \to \R$,  $f \in \BV$ and $x \in (a,b)$.

\noindent {\bf (i)}\; For all $\delta > 0$ and $n \in \N$ we have
\begin{align*}%\label{eq:Long_estim_L_n_Jump}
	& \left| T_nf(x) - \frac{1}{2}[f(x+) + f(x-)]   \right| \nonumber \\
	& \le 2 \QuasiLocalModul{f}{x}{\delta} M(x) +
	 \|f\|_\infty \left ( 4\beta_n(x,\delta) + \alpha_n(x) + \big| T_n(\sign(\cdot - x))(x) \big| \right ). 
%	& +  \frac{1}{2} \big | f(x+) - f(x-) \big | \big| T_n(\sign(\cdot - x))(x) \big|.\nonumber
\end{align*}

\noindent {\bf (ii)}\; If $\displaystyle \lim_{n \to \infty} \alpha_n(x) = 0$,  $\displaystyle \lim_{n \to \infty} \beta_n(x,\delta) = 0$ 
for  any sufficiently small $\delta > 0$,  ${\displaystyle \lim_{n \to \infty} T_n(\sign(\cdot - x))(x) = 0 }$, and  $M(x) < \infty$,  then
 $$
%\begin{equation}
\lim_{n \to \infty} { \left| T_nf(x) - \frac{1}{2}[f(x+) + f(x-)]   \right | } =0. 
%\end{equation}
$$
\end{theorem}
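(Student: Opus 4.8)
The plan is to observe that essentially all of the analytic work has already been carried out in the decomposition preceding the statement, so the proof reduces to assembling the estimate~\eqref{eq:Long_estim_L_n_Jump} and accounting for the single remaining discrepancy. Recall that~\eqref{eq:Long_estim_L_n_Jump} controls the quantity $T_nf(x) - \tfrac12[f(x+)+f(x-)]\int_a^b \cal K_n(x,t)\,dt$, whereas the theorem asks for $T_nf(x) - \tfrac12[f(x+)+f(x-)]$. Thus for part~(i) the only new ingredient is to replace the factor $\int_a^b \cal K_n(x,t)\,dt$ by $1$, and to pay for this replacement with the term $\alpha_n(x)$.

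Concretely, writing $A = \tfrac12[f(x+)+f(x-)]$, I would split
\begin{equation*}
T_nf(x) - A = \Bigl(T_nf(x) - A\textstyle\int_a^b \cal K_n(x,t)\,dt\Bigr) + A\Bigl(\int_a^b \cal K_n(x,t)\,dt - 1\Bigr),
\end{equation*}
bound the first bracket by the right-hand side of~\eqref{eq:Long_estim_L_n_Jump}, namely $2\QuasiLocalModul{f}{x}{\delta}M(x) + \|f\|_\infty(4\beta_n(x,\delta) + |T_n(\sign(\cdot-x))(x)|)$, and bound the second term by $|A|\,\alpha_n(x)$. Since $f \in \BV$ guarantees that both one-sided limits satisfy $|f(x\pm)| \le \|f\|_\infty$, we have $|A| \le \|f\|_\infty$, so the second term is at most $\|f\|_\infty\,\alpha_n(x)$. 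Adding the two estimates collects all three kernel defects under a single factor $\|f\|_\infty(4\beta_n(x,\delta) + \alpha_n(x) + |T_n(\sign(\cdot-x))(x)|)$ and yields~(i) exactly. Behind this step sit the earlier ingredients that I would cite rather than reprove: the bound $\|g_x\|_\infty \le 2\|f\|_\infty$, the quasi-modulus estimate $\omega(g_x,x,2\delta) \le 2\QuasiLocalModul{f}{x}{\delta}$ from~\eqref{eq_3}, and the application of~\eqref{eq:estim_DeVoreLorentz} to $g_x$ (legitimate because $g_x$ is continuous at $x$ with $g_x(x)=0$).

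For part~(ii) I would fix $\delta > 0$ and pass to $\limsup_{n\to\infty}$ in the inequality of~(i). Under the stated hypotheses $\alpha_n(x)\to 0$, $\beta_n(x,\delta)\to 0$, and $T_n(\sign(\cdot-x))(x)\to 0$, the entire $\|f\|_\infty(\cdots)$ contribution vanishes, leaving $\limsup_{n\to\infty}|T_nf(x) - A| \le 2\QuasiLocalModul{f}{x}{\delta}M(x)$. Since $M(x) < \infty$ and this holds for every $\delta > 0$, I would finally let $\delta \to 0^+$ and invoke property~\eqref{prop_symm_quasi-modul}, $\lim_{\delta\to 0^+}\QuasiLocalModul{f}{x}{\delta}=0$, to force the right-hand side to zero. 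I do not anticipate a genuine obstacle here; the only point requiring a little care is the bookkeeping in the $\alpha_n$-splitting above, in particular the observation $|A|\le\|f\|_\infty$, which is what lets the normalization defect be absorbed cleanly into the single $\alpha_n(x)$ term rather than appearing with an awkward data-dependent constant.
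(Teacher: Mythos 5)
Your proposal is correct and follows essentially the same route as the paper: the paper's proof is precisely the triangle-inequality split of $T_nf(x)-\tfrac12[f(x+)+f(x-)]$ into the term controlled by~\eqref{eq:Long_estim_L_n_Jump} plus $\tfrac12[f(x+)+f(x-)]\bigl(\int_a^b \cal K_n(x,t)\,dt-1\bigr)$, with the latter absorbed via $\tfrac12|f(x+)+f(x-)|\le\|f\|_\infty$ into the $\alpha_n(x)$ term. Part~(ii) is likewise obtained by passing to the limit and invoking~\eqref{prop_symm_quasi-modul}, exactly as you describe.
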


\begin{proof}
	By the triangle inequality and~\eqref{eq:Long_estim_L_n_Jump} we have
\begin{align*}
	& \left| T_n f(x) - \frac{1}{2}[f(x+) + f(x-)] \right| \\
	& \le \left| T_n f(x) - \frac{1}{2}[f(x+) + f(x-)] \int_a^b \cal K_n(x,t) dt  \right|
	+ \frac{1}{2} |f(x+) + f(x-)| \left|\int_a^b \cal K_n(x,t) dt  -1 \right| \\
	& \le  2 \QuasiLocalModul{f}{x}{\delta}  M(x) + \|f\|_\infty \left ( 4\beta_n(x,\delta) + \big| T_n(\sign(\cdot - x))(x)\, \big| \, \right )\,   + \|f\|_{\infty} \alpha_n(x),
\end{align*}
which leads to the first claim. The second claim follows directly from it by applying~\eqref{prop_symm_quasi-modul}.
\end{proof}

Similarly to Corollary~\ref{corol_VectorValued_Rieman} for vector-valued functions we obtain 
	\begin{corol}\label{Corol_Estim_T_n_Jump_VecValued}
		Let $f: [a,b] \to \Rd$, $f \in \BV$, $\cal K_n(z,\cdot) \in \BV$ for each $z\in[a,b]$, then for $x \in (a,b)$ 
		\begin{align*}%\label{eq:Long_estim_L_n_Jump}
			& \left| T_nf(x) - \frac{1}{2}[f(x+) + f(x-)]  \right| % \nonumber 
			\\
			& \le 2 \QuasiLocalModul{f}{x}{\delta} M(x) + 
			\|f\|_\infty \big ( 4\beta_n(x,\delta) + \alpha_n(x)  + \big| T_n(\sign(\cdot - x))(x) \big|\, \big ) .
			%	& + \frac{K}{2} \big | f(x+) - f(x-) \big | \big | T_n(\sign(\cdot - x))(x) \big |  , %\nonumber
		\end{align*}

	\end{corol}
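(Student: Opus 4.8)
The plan is to mirror, essentially line by line, the derivation that culminates in Theorem~\ref{theorem_Estim_T_n_Jump}(i), replacing the absolute value on $\R$ by the fixed norm $|\cdot|$ on $\Rd$, in exactly the way Corollary~\ref{corol_VectorValued_Rieman} mirrors Theorem~\ref{theorem:DeVoreLorentz}. Three facts make this transfer legitimate. First, a function $f\in\BV$ with values in $\Rd$ possesses one-sided limits $f(x+),f(x-)\in\Rd$ at every interior point (the metric-space statement recalled in Section~\ref{Sect_Prelim_Regularuty} via~\cite{Chistyakov:On_BV-mappings}), so the left-hand side of the asserted inequality is meaningful. Second, since $f\in\BV$ is Riemann integrable componentwise and $\cal K_n(x,\cdot)\in\BV$, every integrand appearing below is a product of BV functions, hence BV and Riemann integrable; thus $T_nf(x)$ is well defined and, crucially, Lemma~\ref{lemma_IntegralProp} applies to it. Third, Lemma~\ref{lemma_IntegralProp} furnishes $\big|\int_a^b h\big|\le\int_a^b|h|$ with constant $1$, which is precisely the vector substitute for the scalar triangle inequality used throughout the proof of Theorem~\ref{theorem_Estim_T_n_Jump}; this is why the bound carries no norm-equivalence constant, in contrast to the bounded-measurable case of Remark~\ref{remark_ContEstim_VectorValued}.

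Concretely, I would first define the vector function $g_x$ by the same three-case formula as in the scalar development, now with $f(x-),f(x+)\in\Rd$. It is again of bounded variation (it differs from $f$ only by side-wise constant vectors and a value at the single point $x$) and satisfies $g_x(x)=0$ with $g_x$ continuous at $x$. The algebraic identity
$$
f(t)-\tfrac12\big[f(x+)+f(x-)\big]=g_x(t)+\tfrac12\big[f(x+)-f(x-)\big]\sign(t-x)+\Big(f(x)-\tfrac12\big[f(x+)+f(x-)\big]\Big)\chi_{\{x\}}(t)
$$
holds verbatim; note that $\sign(t-x)$ stays scalar while $f(x\pm)$ are vectors, so the middle term is a scalar multiple of a constant vector. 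Applying $T_n$ componentwise (Remark~\ref{Remark_Integral_Vector_Valued}) and using linearity reproduces the vector analogue of~\eqref{eq_1}.

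Next I would estimate $|T_ng_x(x)|$ by invoking the vector-valued continuity bound of Corollary~\ref{corol_VectorValued_Rieman}, that is \eqref{eq:estim_DeVoreLorentz} applied to $g_x$ (which is BV and continuous at $x$), using $g_x(x)=0$ to kill the $\alpha_n$ term and obtain the analogue of~\eqref{eq_2}. The three scalar estimates then transfer directly with $\rho=|\cdot|$: from the definitions of the local modulus and the quasi-moduli one gets $\omega(g_x,x,2\delta)\le 2\QuasiLocalModul{f}{x}{\delta}$ as in~\eqref{eq_3}, while $\|g_x\|_\infty\le 2\|f\|_\infty$ and $\tfrac12|f(x+)-f(x-)|\le\|f\|_\infty$ follow from the triangle inequality for $|\cdot|$ together with $f\in\BV$. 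The jump contribution is $\tfrac12\big[f(x+)-f(x-)\big]\,T_n(\sign(\cdot-x))(x)$, whose norm is the scalar product $\tfrac12|f(x+)-f(x-)|\,\big|T_n(\sign(\cdot-x))(x)\big|\le\|f\|_\infty\big|T_n(\sign(\cdot-x))(x)\big|$. Assembling these pieces exactly as in~\eqref{eq:Long_estim_L_n_Jump}, and then absorbing the $\alpha_n(x)$ term via the same triangle-inequality step used in the proof of Theorem~\ref{theorem_Estim_T_n_Jump}, yields the stated inequality.

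I do not expect a genuine obstacle here; the work is essentially bookkeeping. The one point demanding care is tracking the scalar/vector status of each factor when applying the norm: the jump $f(x+)-f(x-)$ is a vector whereas $\sign(t-x)$ and $T_n(\sign(\cdot-x))(x)$ remain scalars, so the jump term is a scalar times a vector and its norm factors as above rather than requiring a vector triangle inequality. The only substantive input beyond the scalar proof is Lemma~\ref{lemma_IntegralProp}, and its applicability rests on the hypotheses $f\in\BV$ and $\cal K_n(x,\cdot)\in\BV$, which together guarantee Riemann integrability of all the integrands involved.
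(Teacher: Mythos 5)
Your proposal is correct and follows exactly the route the paper intends: the paper states this corollary with no written proof beyond the remark that it follows ``similarly to Corollary~\ref{corol_VectorValued_Rieman}'', i.e.\ by repeating the scalar decomposition via $g_x$ and the sign function componentwise, with Lemma~\ref{lemma_IntegralProp} (valid since $f\in\BV$ and $\cal K_n(x,\cdot)\in\BV$ give Riemann integrability) supplying the norm--integral inequality with constant $1$. Your write-up, including the observation that the jump term is a scalar multiple of a fixed vector so its norm factors without a norm-equivalence constant, is a faithful and complete elaboration of that argument.
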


%%%%%%%%%%%%%%%%%%%%%%%%%%%%%%%%%%%%%%%%%%%%%%%%%%%%%%%%%%%%%%%%%%%%%%%%%%%%%%%%%%%%%
%%%%%%%%%%%%%%%%%%%%%%%%%%%%%%%%%%%%%%%%%%%%%%%%%%%%%%%%%%%%%%%%%%%%%%%%%%%%%%%%%%%%%

\section {Rate of pointwise convergence of integral operators for set-valued functions}\label{Sect_ApproxSVF}

Let $F \in \calF$ and  $\cal K_n(x, \cdot) \in \BV$ for any $x\in [a,b]$. Denoting $\kappa_{n,x}(t) = \cal K_n(x,t)$ and using the concept of the weighted metric integral (see Section~\ref{Sect_MS_MetIntegral}), we define
\begin{equation} \label{eq:Def_Metr_Operator}
T_nF(x) = {\scriptstyle(\cal M_{\kappa_{n,x}})} \int_a^b \kappa_{n,x}(t)  F(t)  dt, \quad x \in [a,b], \quad n \in \N.
\end{equation}
By Result~\ref{Result_W-MetrInt=W-IntOfMetrSel} and since $\kappa_{n,x}(t) = \cal K_n(x,t)$ we have for $F \in \calF$
\begin{equation} \label{eq:Operator_Repr_Metric_Sel}
T_nF (x)  = \left \{ \int_{a}^{b} \cal K_n(x,t) s(t) dt \ : \ s \in\mathcal{S}(F) \right\} = \left \{  T_ns (x) \ : \ s \in\mathcal{S}(F) \right \}, \quad x \in [a,b],
\end{equation}
where $\mathcal{S}(F)$ is the set of metric selections of $F$. By Result~\ref{Result_MetSel_ThroughAnyPoint_Repres} we have as well
\begin{equation} \label{eq:Function_Repr_Metric_Sel}
F (x) = \left \{  s(x) \ : \ s \in\mathcal{S}(F) \right \}, \quad x \in [a,b].
\end{equation}
It is easy to obtain from~\eqref{eq:Operator_Repr_Metric_Sel} and~\eqref{eq:Function_Repr_Metric_Sel} that 
\begin{equation}\label{haus<=sup_s}
\haus \left(T_nF(x), F(x) \right) \le \sup{ \left\{ \left| T_ns(x) - s(x)  \right| \ : \ s \in \mathcal{S}(F) \right\} }. 
\end{equation}
The arguments leading to~\eqref{haus<=sup_s} are similar to those in the proof of Lemma~\ref{lemma_Hausdorff_L^1} below.
\begin{remark} ${}$
 Any bounded linear operator defined for single-valued functions can be extended to set-valued functions from the class $\calF$ by
	$$
	T F (x) = \{ Ts(x)  \ : \ s \in\mathcal{S}(F)  \}, \quad x \in [a,b], \quad F \in \calF.
	$$
	This approach was used for operators $T$ defined on continuous real-valued functions  in \cite[Section~8.2]{DFM:Book_SV-Approx}.
\end{remark}

%We also refer the reader to~\cite{DFM:Book_SV-Approx}, Remark~8.2.4~(iv), for additional explanations.

%%%%%%%%%%%%%%%%%%%%%%%%%%%%%%%%%%%%%%%%%%%%%%%%%%%%%%%%%%%%%%%%%%%%%%%%%%%%%%%%%%%%%
\subsection{The case of continuity points}\label{Sect_ApproxSVF_Cont}

Recall that by Results~\ref{Result_MetrSel_InheritVariation} and~\ref{Result_LocModuli_s<=LocModuli_v_F} for each selection $s \in \setMS $ we have 
$$
\|s\|_\infty \le \|F\|_\infty \quad \mbox{and} \quad 
\omega(s,x,\delta) \le \omega(v_F, x, 2\delta) \, , \;\, x \in [a,b]\, , \; \delta > 0.
$$
Thus for each $s \in \mathcal{S}(F)$ the estimate in Corollary~\ref{corol_VectorValued_Rieman}  turns into
%$$ %\begin{equation} \label{Tn(s)-s}
%|T_ns(x) - s(x)| \le C \omega(v_F,x,4\delta) M(x) + C \|F\|_\infty (2\beta_n(x,\delta) + \alpha_n(x)).
%$$ %\end{equation}
\begin{equation} \label{Tn(s)-s}
|T_ns(x) - s(x)| \le \omega(v_F,x,4\delta) M(x) +  \|F\|_\infty (2\beta_n(x,\delta) + \alpha_n(x)).
\end{equation}

\noindent We arrive at% at the following statement
\begin{theorem}\label{Theorem_EstimSVF_ContPoint}
%	Let $f \in L^1[a,b]$ be bounded on $[a,b]$,  $x \in [a,b]$ and let $M(x)$ be as in~\eqref{eq:M(x)bound}. Then the following statements holds:
Let $F \in \calF$, $x \in [a,b]$, $\{\cal K_n(x,t)\}_{n \in \N}$ be a sequence of kernels on $[a,b]\times[a,b]$ such that $\cal K_n(x, \cdot) \in \BV$ for any $x\in [a,b]$ and let $\alpha_n(x)$, $\beta_n(x,\delta)$ and $M(x)$ be as in~Section~\ref{Sect_Convergence_RealValued}. Then %the following statements holds:

	\noindent {\bf (i)}\;  For all $\delta > 0$ and $n \in \N$
	$$
	%\begin{equation}%\label{eq:estim_DeVoreLorentz}
%		|T_nf(x) - f(x)| \le \omega(f,x,2\delta) M(x) + \|f\|_\infty (2\beta_n(x,\delta) + \alpha_n(x)).
\haus (T_nF(x), F(x) ) \le \omega(v_F,x,4\delta) M(x) + \|F\|_\infty (2\beta_n(x,\delta) + \alpha_n(x)) \, , \quad  x \in [a,b].%\, , \; n \in \N \; , \delta > 0 ,
	%\end{equation}
	$$

	\noindent {\bf (ii)}\; If $\displaystyle \lim_{n \to \infty} \alpha_n(x) = 0$, $\displaystyle \lim_{n \to \infty} \beta_n(x,\delta) = 0$  for any sufficiently small $\delta > 0$, $M(x) < \infty$ and if $x$ is a point of continuity of~$F$, then
	$$
	%\begin{equation}%\label{eq:converg_DeVoreLorentz}
%		\lim_{n \to \infty} {| T_nf(x) - f(x)|} =0. 
\lim_{n \to \infty} { \haus \left(T_nF(x), F(x) \right)  } =0. 
	%\end{equation}
	$$

%	\noindent {\bf (iii)}\; If $F$ is continuous in $[a,b]$, if $\displaystyle \lim_{n \to \infty} \alpha_n(x) = 0$ and $\displaystyle \lim_{n \to \infty} 
%	\beta_n(x,\delta) = 0$ uniformly in $x\in [a,b]$ for any sufficiently small $\delta > 0$,  and if $M(x)$ is bounded on $[a,b]$, then  the convergence is uniform in $[a,b]$.\\
	
	\noindent {\bf (iii)}\; If $F$ is continuous at all points of a closed interval $I \subseteq [a,b]$, if ${\displaystyle \lim_{n \to \infty} \alpha_n(x) = 0}$ 
	and ${\displaystyle \lim_{n \to \infty} \beta_n(x,\delta) = 0}$ uniformly in $x\in I$ for  any sufficiently small $\delta > 0$,  and if $M(x)$ is bounded on $I$, then the convergence is uniform in~$I$.
\end{theorem}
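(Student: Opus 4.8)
The plan is to reduce the set-valued estimate entirely to the single-valued estimate already established for metric selections, exploiting the fact that both $F(x)$ and $T_nF(x)$ are represented through the \emph{same} family $\mathcal{S}(F)$ of metric selections, via \eqref{eq:Operator_Repr_Metric_Sel} and \eqref{eq:Function_Repr_Metric_Sel}. The decisive feature is that the per-selection bound \eqref{Tn(s)-s} has a right-hand side that is completely independent of the particular selection $s$: it involves only the variation function $v_F$, the quantity $\|F\|_\infty$, and the kernel quantities $M(x)$, $\beta_n(x,\delta)$, $\alpha_n(x)$. This uniformity is exactly what allows a supremum over $\mathcal{S}(F)$ to collapse to a single bound.

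For part (i), I would start from \eqref{haus<=sup_s}, which bounds $\haus(T_nF(x),F(x))$ by $\sup\{|T_ns(x)-s(x)| : s \in \mathcal{S}(F)\}$. Applying \eqref{Tn(s)-s} to every $s \in \mathcal{S}(F)$ and taking the supremum, the selection-independent right-hand side passes through unchanged and yields the claimed inequality at once. No further computation is needed here: all the analytic work has been absorbed into Corollary~\ref{corol_VectorValued_Rieman} together with Results~\ref{Result_MetrSel_InheritVariation} and~\ref{Result_LocModuli_s<=LocModuli_v_F}, which are precisely what produced \eqref{Tn(s)-s}.

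For part (ii), I would fix $\delta>0$, apply the estimate of part (i), and let $n\to\infty$. Under the hypotheses $\alpha_n(x)\to 0$, $\beta_n(x,\delta)\to 0$, and $M(x)<\infty$, the last two summands vanish, giving $\limsup_{n\to\infty}\haus(T_nF(x),F(x)) \le \omega(v_F,x,4\delta)\,M(x)$. Since $\delta>0$ is arbitrary, it remains to send $\delta\to 0^+$: here I invoke Result~\ref{Result_f_cont->v_f_cont}, by which continuity of $F$ at $x$ forces continuity of $v_F$ at $x$, hence $\omega(v_F,x,4\delta)\to 0$, establishing the assertion of (ii). For part (iii) the same scheme applies pointwise; to upgrade to uniform convergence on the closed interval $I$, I first use Result~\ref{Result_f_cont->v_f_cont} to transfer continuity of $F$ on $I$ to continuity of $v_F$ on $I$, and then apply Lemma~\ref{Lemma_Korovkin} to $v_F$ to conclude that $\omega(v_F,x,4\delta)\to 0$ uniformly in $x\in I$ as $\delta\to 0^+$. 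Combined with the assumed uniform decay of $\alpha_n$ and $\beta_n$ and the boundedness of $M$ on $I$, uniformity follows directly from the estimate of part (i), exactly as in the proof of Theorem~\ref{theorem:DeVoreLorentz}.

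The only genuinely delicate point — and the step I would verify most carefully — is the selection-independence of the bound in part (i), specifically that the local-modulus term is controlled by $\omega(v_F,x,4\delta)$ rather than by some $\omega(s,x,\cdot)$ which would vary with $s$ and could, a priori, make the supremum in \eqref{haus<=sup_s} blow up. This is precisely what Result~\ref{Result_LocModuli_s<=LocModuli_v_F} guarantees, and it is the mechanism that keeps the supremum finite and uniform. Everything else is a faithful transcription of the real-valued argument.
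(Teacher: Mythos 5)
Your proposal is correct and follows essentially the same route as the paper: reduce to the per-selection estimate \eqref{Tn(s)-s} via \eqref{haus<=sup_s}, exploit the selection-independence of the bound (via Result~\ref{Result_LocModuli_s<=LocModuli_v_F} and Result~\ref{Result_MetrSel_InheritVariation}), and handle (ii) and (iii) by the limiting arguments of Theorem~\ref{theorem:DeVoreLorentz} together with the transfer of continuity from $F$ to $v_F$ given by Result~\ref{Result_f_cont->v_f_cont}. No gaps.
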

\begin{proof}
The statements~(i) and~(ii) follow from the first two claims of Theorem~\ref{theorem:DeVoreLorentz} combined with~\eqref{haus<=sup_s} and~\eqref{Tn(s)-s}. The proof of the statements~(iii) is based on Result~\ref{Result_f_cont->v_f_cont}, which implies that $v_F$ is uniformly continuous on compact intervals, and therefore $\omega(v_F,x ,4\delta) \to 0$ uniformly for $x \in I$.
\end{proof}

%%%%%%%%%%%%%%%%%%%%%%%%%%%%%%%%%%%%%%%%%%%%%%%%%%%%%%%%%%%%%%%%%%%%%%%%%%%%%%%%%%%%%
\subsection{The case of discontinuity points}\label{Sect_ApproxSVF_Discont}

%Now let $F \in \mathcal{F}[0,1]$. 
For $x \in (a,b)$ we define the set (see also \cite[Section 6.3]{BDFM:2021})
$$
%\begin{equation}\label{eq:LimitSet}
	A_F(x) = \left\{ \frac{1}{2} \left( s(x+) + s(x-) \right) \ : \ s \in \mathcal{S}(F) \right\}.
%\end{equation}
$$
Similarly to~\eqref{haus<=sup_s} we have
\begin{equation}\label{haus<=sup_s_Jump}
\haus \left(T_nF(x), A_F(x) \right) \le \sup{ \left\{ \left| T_ns(x) - \frac{1}{2}(s(x+) + s(x-))   \right| \ : \ s \in \mathcal{S}(F) \right\} }. 
\end{equation}
 Recall that by Result~\ref{Result_MetrSel_InheritVariation}, $\|s\|_\infty \le \|F\|_\infty$\, and $\frac{1}{2}|s(x+) + s(x-)| \le \|F\|_\infty$. 
%Denote 
%\begin{equation}\label{quasi-modulu}
%\varpi(\delta)=\max\big \{\NewLeftLocalModul{v_F}{x}{2\delta} , \NewRightLocalModul{v_F}{x}{\delta} \big \} .
%\end{equation} 
%Note that by~\eqref{prop_quasi-moduli}
%$$
%\lim_{ \delta \to 0^+ } \varpi(\delta)=0.
%$$
In view of Corollary~\ref{corol_QuasiMod_s<=QuasiMod_v_F} we have for each $s \in \mathcal{S}(F)$ 
$$
\QuasiLocalModul{s}{x}{\delta} \le \QuasiLocalModul{v_F}{x}{2\delta}.
$$
By Corollary~\ref{Corol_Estim_T_n_Jump_VecValued}
and the above inequality we have for any $s \in \mathcal{S}(F)$ 
\begin{align*}\label{eq:Long_estim_L_n_Jump}
	 & \left| T_n s(x)   - \frac{1}{2}[s(x+) + s(x-)] \right| \\
	 & \le  2  \QuasiLocalModul{v_F}{x}{2\delta} M(x) + \|F\|_\infty \big ( 4\beta_n(x,\delta) + \alpha_n(x) + \big| T_n(\sign(\cdot - x))(x)\, \big| \, \big ). 
%	& \le K \left [ 2 \varpi(\delta) M(x) + \|F\|_\infty (4\beta_n(x,\delta) + 2\alpha_n(x)) %\nonumber \\
%	 +  \frac{1}{2} \big | s(x+) - s(x-) \big | \big| T_n(\sign(\cdot - x))(x) \big| \right ] .\nonumber
\end{align*}

This together with~\eqref{haus<=sup_s_Jump}, by arguments  as in the proof of~(ii) of Theorem~\ref{theorem:DeVoreLorentz}, leads to the following result

\begin{theorem}\label{Theorem_EstimSVF_Jamp}
Let $F \in \calF$ and let $\{\cal K_n(x,t)\}_{n \in \N}$ be a sequence of kernels on $[a,b]\times[a,b]$ such that $\cal K_n(x, \cdot) \in \BV$ for any $x\in [a,b]$. 
Then for any $x\in (a,b)$ the following statements hold.
\medskip

\noindent {\bf (i)}\;	For all $\delta > 0$ and $n \in \N$
\begin{align*}
	\haus (T_nF(x), A_F(x) ) \le  2 \QuasiLocalModul{v_F}{x}{2\delta} M(x) + 
	\|F\|_\infty \big ( 4 \beta_n(x,\delta) +\alpha_n(x)  +  \big| T_n(\sign(\cdot - x))(x) \big| \big ).
%	+ \frac{K}{2} \sup_{s\in \setMS} | s(x+) - s(x-) | \left| T_n(\sign(\cdot - x))(x) \right|\, , 
\end{align*}

\noindent {\bf (ii)}\;	
	If $\displaystyle \lim_{n \to \infty} \alpha_n(x) = 0$, 
	$\displaystyle \lim_{n \to \infty} \beta_n(x,\delta) = 0$  for any sufficiently small $\delta > 0$,
  ${\displaystyle \lim_{n \to \infty} T_n(\sign(\cdot - x))(x) = 0}$, and $M(x) < \infty$,  then 
	$$
	\lim_{n \to \infty} { \haus \left(T_nF(x), A_F(x) \right)  } =0. 
	$$

\end{theorem}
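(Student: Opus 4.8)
The plan is to read off both statements from the selection-wise jump estimate that the preceding paragraphs have already assembled, so that the whole argument reduces to the inequality \eqref{haus<=sup_s_Jump}. First I would note that \eqref{haus<=sup_s_Jump} bounds $\haus(T_nF(x),A_F(x))$ by the supremum over $s \in \mathcal{S}(F)$ of the single-valued jump-error $\bigl| T_ns(x)-\tfrac12(s(x+)+s(x-))\bigr|$; hence it suffices to control this quantity uniformly in $s$. For a fixed metric selection $s$, Corollary~\ref{Corol_Estim_T_n_Jump_VecValued} (the vector-valued refinement of the real-valued jump estimate) supplies exactly such a bound, expressed through $\QuasiLocalModul{s}{x}{\delta}$, $\|s\|_\infty$, $\alpha_n(x)$, $\beta_n(x,\delta)$, and $\bigl|T_n(\sign(\cdot-x))(x)\bigr|$.

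The crucial point, and the only place where set-valuedness genuinely enters, is that this per-selection bound can be made independent of $s$. I would invoke Result~\ref{Result_MetrSel_InheritVariation} to replace $\|s\|_\infty$ by $\|F\|_\infty$ and $\tfrac12|s(x+)+s(x-)|$ by $\|F\|_\infty$, and Corollary~\ref{corol_QuasiMod_s<=QuasiMod_v_F} to replace $\QuasiLocalModul{s}{x}{\delta}$ by $\QuasiLocalModul{v_F}{x}{2\delta}$; the term $\bigl|T_n(\sign(\cdot-x))(x)\bigr|$ already contains no dependence on $s$. After these substitutions the right-hand side is free of $s$, so taking the supremum over $\mathcal{S}(F)$ and combining with \eqref{haus<=sup_s_Jump} yields statement~(i).

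For statement~(ii) I would argue exactly as in part~(ii) of Theorem~\ref{theorem:DeVoreLorentz}: fix $\delta>0$ and pass to the limit $n\to\infty$ in~(i). Under the hypotheses $\alpha_n(x)\to0$, $\beta_n(x,\delta)\to0$, $T_n(\sign(\cdot-x))(x)\to0$, together with $M(x)<\infty$, the last three terms vanish and one is left with $\limsup_{n\to\infty}\haus(T_nF(x),A_F(x)) \le 2\,\QuasiLocalModul{v_F}{x}{2\delta}\,M(x)$. Since this holds for every $\delta>0$, letting $\delta\to0^+$ and applying \eqref{prop_symm_quasi-modul} to the function $v_F$, which is monotone and of bounded variation so that $\QuasiLocalModul{v_F}{x}{2\delta}\to0$, forces the limit to be zero.

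I expect no serious obstacle here, precisely because the heavy analysis was carried out at the level of a single selection and in the real-valued and vector-valued lemmas of the previous section. The one thing that must be gotten right is the uniformity in $s$: each selection-dependent quantity has to be dominated by an intrinsic quantity of $F$ (its sup-norm and its variation function $v_F$) so that the supremum in \eqref{haus<=sup_s_Jump} may be taken without enlarging any constant, and the passage to the limit in $\delta$ must rely on the decay of the quasi-modulus of $v_F$ rather than of the individual $s$.
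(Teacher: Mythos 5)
Your proposal is correct and follows essentially the same route as the paper: the paper likewise combines \eqref{haus<=sup_s_Jump} with Corollary~\ref{Corol_Estim_T_n_Jump_VecValued}, makes the per-selection bound uniform via Result~\ref{Result_MetrSel_InheritVariation} and Corollary~\ref{corol_QuasiMod_s<=QuasiMod_v_F}, and then passes to the limit as in part~(ii) of Theorem~\ref{theorem:DeVoreLorentz} using \eqref{prop_symm_quasi-modul} applied to $v_F$. No gaps.
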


%%%%%%%%%%%%%%%%%%%%%%%%%%%%%%%%%%%%%%%%%%%%%%%%%%%%%%%%%%%%%%%%%%%%%%%%%%%%%%%%%%%%%
%%%%%%%%%%%%%%%%%%%%%%%%%%%%%%%%%%%%%%%%%%%%%%%%%%%%%%%%%%%%%%%%%%%%%%%%%%%%%%%%%%%%%

\section{Specific operators}\label{Sect_SpecificOperators}
Here we apply the results of the previous sections to two specific integral operators. 
In particular, at points of discontinuity we obtain error estimates that combine ideas from~\cite{Guo:87, ZengPiriou:98} with the local quasi-moduli of continuity~\eqref{Left_QuasiModuli}, \eqref{Right_QuasiModuli}.

%\subsection{Convergence estimates for Bernstein-Durrmeyer and Kantorovich operators at continuity points}
\subsection{Bernstein-Durrmeyer operators}

\noindent For $x \in [0,1]$, $n \in \N$ the Bernstein basis polynomials are defined as
\begin{equation}\label{Bernstein-basis}
p_{n,k}(x) = {n \choose k} x^k (1-x)^{n-k} \, ,  \quad k = 0, 1, \ldots, n.
\end{equation}
Note that $\displaystyle \sum_{k=0}^n p_{n,k}(x) = 1$ and $\displaystyle \int_0^1 p_{n,k}(x) dx = \frac{1}{n+1}$. 

\noindent The Bernstein-Durrmeyer  operator is defined for $f \in L^1[0,1]$ by
$$
M_nf(x) = (n+1) \sum_{k=0}^n p_{n,k}(x) \int_0^1 p_{n,k}(t) f(t) dt = \int_0^1 \cal K_n(x,t) f(t) dt\, , \quad x \in [0,1]\, , \;  n \in \N,
$$
where
\begin{equation}\label{eq:Kernel_BDO}
\cal K_n(x,t) = (n+1) \sum_{k=0}^n p_{n,k}(x) p_{n,k}(t).
\end{equation}
The Bernstein-Durrmeyer operator for a set-valued function $F \in \cal F[0,1] $ is
$$
%\begin{equation}\label{eq:SV_BDO}
M_nF(x) = {\scriptstyle(\cal M_{\kappa_{n,x}})} \int_0^1 \kappa_{n,x}(t) F(t)  dt, \quad x \in [0,1], \quad n \in \N\, ,
%\WeightMetIntGen{0}{1} \cal K_n(x,t) F(t) dt , 
%\WeightedMetIntGen{0}{1}  \cal K_n(x,t) F(t) dt\, , \quad x \in [0,1], , \; \, n \in \N,
%\end{equation}
$$
with $\kappa_{n,x}(t)=\cal K_n(x,t)$, where $\cal K_n(x,t)$ is given in~\eqref{eq:Kernel_BDO}.

\noindent The properties of the Bernstein basis polynomials $p_{n,k}$ yield 
$$
\cal K_n(x,t) \ge 0, \quad \int_0^1 \cal K_n(x,t) dt = 1,
$$
so that $\alpha_n(x) = 0$ and $M(x) = 1$, $x \in [0,1]$. 

Denote $e_i(t) = t^i$, $i=0,1,2$. A direct calculation shows that (see e.g.~\cite{Guo:87})
$$
M_n e_0 (x) = 1 \, , \qquad M_n e_1(x) = \frac{nx+1}{n+2}\, , \qquad M_n e_2(x) = \frac{n(n-1) x^2 + 4nx +2}{(n+2)(n+3)} . 
$$
This implies
\begin{equation}\label{eq:BDO_sec_moment}
M_n ((\cdot - x)^2)(x) = \frac{2[ (n-3)x(1-x) + 1]}{(n+2)(n+3)}.
\end{equation}
Following S. Guo (see Lemma~6 in  \cite{Guo:87}), we estimate $\beta_n(x,\delta)$ as follows:  
\begin{align*}
\beta_n(x,\delta) &= \int_{|x-t| \ge \delta} \cal K_n(x,t) dt 
\le \int_{|x-t| \ge \delta} \left( \frac{x-t}{\delta} \right)^2 \cal K_n(x,t) dt \\
& \le \frac{1}{\delta^2} \int_0^1 (x-t)^2 \cal K_n(x,t) dt 
=  \frac{1}{\delta^2} M_n ((\cdot - x)^2)(x) 
= \frac{1}{\delta^2} \frac{2[ (n-3)x(1-x) + 1]}{(n+2)(n+3)}.
\end{align*}
Using the inequality $ \displaystyle x(1-x) \le \frac{1}{4}$, $x \in [0,1]$, we arrive at the estimate
$\displaystyle
\beta_n(x, \delta) \le \frac{n+1}{2 \delta^2 (n+2)(n+3)} \le \frac{1}{2n\delta^2 }.%\, , \; x \in [0,1]. \quad 
$
Thus, for the Bernstein-Durrmeyer operator we have
$$
%\begin{equation}\label{eq:BDO_Alpha_Beta_M}
M(x) = 1 \; , \; \alpha_n(x) = 0 \; , \;  \beta_n(x, \delta) \le \frac{1}{2n\delta^2 } \, , \quad x \in [0,1].
%\end{equation}
$$

%%%%%%%%%%%%%%%%%%%%%%%%%%%%%%%%%%%%%%%%%%%%%%%%%%%%%%%

\noindent \textbf{The case of continuity points} 
\smallskip

The assumptions of Theorem~\ref{theorem:DeVoreLorentz}  are fulfilled for the Bernstein-Durrmeyer operator.
  Thus we obtain the following result, where part {\bf(i)}, which provides rate of convergence, is new in this form, while parts {\bf (ii)} and {\bf (iii)} are already known.

\begin{corol}\label{Lemma_estim_BDO_RV}
	Let $f \in L^1[0,1]$ be bounded on $[0,1]$,  $x \in [0,1]$. Then
	\medskip
		
	\noindent {\bf (i)}\;  For all $\delta > 0$ and $n \in \N$
$$
|M_nf(x) - f(x)| \le \omega(f,x,2\delta)  + \|f\|_\infty \frac{1}{n\delta^2 }.
$$
	\noindent {\bf (ii)}\; If $x$ is a point of continuity of~$f$, then $ \lim_{n \to \infty} {| M_nf(x) - f(x)|} = 0 $.
	\medskip

%	\noindent {\bf (iii)}\; If $f$ is continuous in $[0,1]$, then the convergence is uniform in $[0,1]$.
%	\medskip
%		
	\noindent {\bf (iii)}\; If $f$ is continuous at all points of a closed interval $I\subseteq[0,1]$, 
	then the convergence is uniform in~$I$.
\end{corol}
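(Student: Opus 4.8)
The plan is to obtain all three parts by specializing Theorem~\ref{theorem:DeVoreLorentz} to the Bernstein-Durrmeyer operator, using the three facts already recorded above for every $x \in [0,1]$: namely $M(x) = 1$, $\alpha_n(x) = 0$, and $\beta_n(x,\delta) \le \frac{1}{2n\delta^2}$. Since these were derived from the positivity and normalization of the kernel~\eqref{eq:Kernel_BDO} together with the second-moment computation~\eqref{eq:BDO_sec_moment}, no further analytic work on the operator itself is needed, and the corollary reduces to substitution and verification of hypotheses.

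For part {\bf (i)}, I would insert $M(x)=1$ and $\alpha_n(x)=0$ into the estimate~\eqref{eq:estim_DeVoreLorentz} of Theorem~\ref{theorem:DeVoreLorentz}~{\bf (i)}, which collapses it to $|M_nf(x) - f(x)| \le \omega(f,x,2\delta) + 2\|f\|_\infty\,\beta_n(x,\delta)$; bounding $2\beta_n(x,\delta) \le \frac{1}{n\delta^2}$ then yields the claimed inequality verbatim.

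For part {\bf (ii)}, I would check the three hypotheses of Theorem~\ref{theorem:DeVoreLorentz}~{\bf (ii)} at a continuity point $x$: the term $\alpha_n(x)=0$ vanishes trivially, $\beta_n(x,\delta)\le\frac{1}{2n\delta^2}\to 0$ as $n\to\infty$ for each fixed $\delta>0$, and $M(x)=1<\infty$. The conclusion then follows directly. Equivalently, one may argue straight from part {\bf (i)}: fixing $\delta$ and letting $n\to\infty$ gives $\limsup_{n\to\infty}|M_nf(x)-f(x)| \le \omega(f,x,2\delta)$, after which letting $\delta\to 0^+$ annihilates the right-hand side at a continuity point.

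For part {\bf (iii)}, the only point requiring attention is uniformity, so I would invoke Theorem~\ref{theorem:DeVoreLorentz}~{\bf (iii)}, whose hypotheses hold uniformly in $x\in I$ precisely because all three quantities are $x$-independent in our situation: $\alpha_n\equiv 0$, the bound $\beta_n(x,\delta)\le\frac{1}{2n\delta^2}$ does not involve $x$, and $M\equiv 1$ is bounded on $I$. Uniform convergence then follows, the local modulus $\omega(f,x,2\delta)$ tending to zero uniformly on $I$ by Lemma~\ref{Lemma_Korovkin}. The main (and essentially only) obstacle here is bookkeeping, confirming that the constants established for $M_n$ match the uniform hypotheses of the abstract theorem, since the genuine estimates were already carried out at the level of Theorem~\ref{theorem:DeVoreLorentz}.
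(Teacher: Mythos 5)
Your proposal is correct and matches the paper's own argument: the paper likewise obtains this corollary by substituting $M(x)=1$, $\alpha_n(x)=0$, and $\beta_n(x,\delta)\le \frac{1}{2n\delta^2}$ into the three parts of Theorem~\ref{theorem:DeVoreLorentz}, and your constant bookkeeping ($2\beta_n(x,\delta)\le \frac{1}{n\delta^2}$) is exactly right.
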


From Theorem~\ref{Theorem_EstimSVF_ContPoint} for the Bernstein-Durrmeyer operator we get
\begin{corol}\label{Lemma_estim_BDO_SV}
	Let $F \in \mathcal{F}[0,1]$, $x \in [0,1]$. Then 
	\medskip  
		
	\noindent {\bf (i)}\;  For all $\delta > 0$ and $n \in \N$
$$
\haus (M_nF(x), F(x) ) \le \omega(v_F,x,4\delta) +  \|F\|_\infty \frac{1}{n\delta^2 }.
$$
%	\medskip 	
	
	\noindent {\bf (ii)}\; If $x$ is a point of continuity of~$F$, then $\lim_{n \to \infty} { \haus \left(M_nF(x), F(x) \right)  } =0 $.
	\medskip  
	
%	\noindent {\bf (iii)}\; If $F$ is continuous in $[0,1]$, then the convergence is uniform in $[0,1]$.
%	\medskip  
%		
	\noindent {\bf (iii)}\; If $F$ is continuous at all points of a closed interval $I\subseteq[0,1]$, then the convergence is uniform in~$I$.
\end{corol}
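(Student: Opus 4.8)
The plan is to deduce all three parts directly from Theorem~\ref{Theorem_EstimSVF_ContPoint} by inserting the operator-specific quantities that have just been computed for the Bernstein-Durrmeyer operator, namely $M(x) = 1$, $\alpha_n(x) = 0$, and $\beta_n(x,\delta) \le \frac{1}{2n\delta^2}$, all valid for every $x \in [0,1]$. Since these constants are already in hand, the corollary is essentially a specialization of the general set-valued estimate, and the only points requiring attention are the verification of the hypotheses of parts {\bf (ii)} and {\bf (iii)} together with the bookkeeping that the bound on $\beta_n$ holds uniformly in $x$.

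For part {\bf (i)}, I would substitute these values into the estimate of Theorem~\ref{Theorem_EstimSVF_ContPoint}~{\bf (i)}. With $\alpha_n(x) = 0$ the last term drops out, $M(x) = 1$ leaves the modulus term untouched, and $2\beta_n(x,\delta) \le \frac{1}{n\delta^2}$ gives the second term $\|F\|_\infty \frac{1}{n\delta^2}$. This yields
$$
\haus \left( M_nF(x), F(x) \right) \le \omega(v_F, x, 4\delta) + \|F\|_\infty \frac{1}{n\delta^2}
$$
for all $\delta > 0$ and $n \in \N$, as claimed.

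For part {\bf (ii)}, I would check the three conditions needed in Theorem~\ref{Theorem_EstimSVF_ContPoint}~{\bf (ii)}: since $\alpha_n(x) = 0$ we have $\lim_{n \to \infty} \alpha_n(x) = 0$ trivially; for each fixed $\delta > 0$ one has $\beta_n(x,\delta) \le \frac{1}{2n\delta^2} \to 0$ as $n \to \infty$; and $M(x) = 1 < \infty$. All hypotheses therefore hold, so convergence at every point of continuity of $F$ follows at once.

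For part {\bf (iii)}, the decisive observation is that the upper bound $\frac{1}{2n\delta^2}$ on $\beta_n(x,\delta)$ is independent of $x$, so that $\beta_n(\cdot,\delta) \to 0$ uniformly on $I$ for each fixed $\delta$; combined with $\alpha_n \equiv 0$ and the boundedness $M(x) = 1$ on $I$, the hypotheses of Theorem~\ref{Theorem_EstimSVF_ContPoint}~{\bf (iii)} are met and uniform convergence on $I$ results. I do not anticipate a genuine obstacle: the entire difficulty has been absorbed into the general theorem and the prior second-moment computation \eqref{eq:BDO_sec_moment}, so what remains is only to confirm the $x$-independence of the constants, which the explicit Bernstein-Durrmeyer bounds supply.
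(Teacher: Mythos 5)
Your proposal is correct and coincides with the paper's own (implicit) proof: the corollary is obtained exactly by substituting $M(x)=1$, $\alpha_n(x)=0$, and $\beta_n(x,\delta)\le \frac{1}{2n\delta^2}$ into Theorem~\ref{Theorem_EstimSVF_ContPoint}, with the $x$-independence of the $\beta_n$ bound giving the uniformity in part (iii). Nothing is missing.
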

\medskip

%%%%%%%%%%%%%%%%%%%%%%%%%%%%%%%%%%%%%%%%%%%%%%%%%%%%%%%

\noindent \textbf{The case of discontinuity points} 
\smallskip

Theorem~\ref{theorem_Estim_T_n_Jump} for the Bernstein-Durrmeyer operator leads to
\begin{align*}
	\left| M_nf(x) - \frac{1}{2}[f(x+) + f(x-)]  \right| 
	 \le 2 \QuasiLocalModul{f}{x}{\delta} 
	+ \|f\|_\infty \left ( \frac{2}{n\delta^2} + \big | M_n(\sign(\cdot - x))(x) \big | \right ) 
%	 + \|f\|_\infty  \frac{2}{n\delta^2}
%	+  \frac{1}{2} [f(x+) - f(x-)] \left| M_n(\sign(\cdot - x))(x) \right|, \quad  x \in (0,1).
\end{align*}
for $x \in (0,1)$. S.~Guo proved in \cite{Guo:87} (see Proof of the Theorem) that 
$$
| M_n(\sign(\cdot - x))(x) | \le \frac{13}{2 \sqrt{nx(1-x)}}
$$
for $x \in (0,1)$ and $n$ large enough. Thus we get

\begin{corol}\label{Corol_Estim_BDO_RV_Jamp}
	Let $f: [0,1] \to \R$, $f \in \BV$ and $x \in (0,1)$. Then

\noindent {\bf (i)}\; 
For all $\delta > 0$ and $n \in \N$  large enough 
$$
	\left| M_nf(x) - \frac{1}{2}[f(x+) + f(x-)]  \right|  \le 2 \QuasiLocalModul{f}{x}{\delta}
	+ \|f\|_\infty \left ( \frac{2}{n\delta^2} + \frac{13}{2 \sqrt{nx(1-x)}} \right ),
$$
\noindent {\bf (ii)}\;  $\displaystyle	\lim_{n \to \infty} { \left| M_nf(x) - \frac{1}{2}[f(x+) + f(x-)]   \right | } =0 $.
\end{corol}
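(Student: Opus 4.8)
The plan is to read both parts off the general jump estimate of Theorem~\ref{theorem_Estim_T_n_Jump}, specialized to the Bernstein-Durrmeyer kernel, and then to substitute the quoted bound on the signum term. First I would recall the three kernel quantities already computed for $M_n$ above, namely $M(x)=1$, $\alpha_n(x)=0$, and $\beta_n(x,\delta)\le \frac{1}{2n\delta^2}$ for all $x\in[0,1]$. Feeding these into the inequality of Theorem~\ref{theorem_Estim_T_n_Jump}~(i) makes the coefficient $M(x)$ equal to $1$, eliminates the term $\alpha_n(x)$, and turns $4\beta_n(x,\delta)$ into $\frac{2}{n\delta^2}$. This produces exactly the displayed inequality
$$
\left| M_nf(x) - \tfrac12[f(x+)+f(x-)]\right| \le 2\,\QuasiLocalModul{f}{x}{\delta} + \|f\|_\infty\left(\frac{2}{n\delta^2} + \big|M_n(\sign(\cdot-x))(x)\big|\right)
$$
that precedes the statement, valid for every $\delta>0$ and $n\in\N$.

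To finish part~(i) I would replace the signum contribution by Guo's bound $\big|M_n(\sign(\cdot-x))(x)\big|\le \frac{13}{2\sqrt{nx(1-x)}}$, which holds for $x\in(0,1)$ and $n$ large enough; this substitution is the only place where the hypothesis ``$n$ large enough'' is used, and it is the single genuinely analytic input of the argument.

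For part~(ii) I would fix $\delta>0$ and let $n\to\infty$ in the inequality of part~(i). Because $x\in(0,1)$ is fixed, both $\frac{2}{n\delta^2}$ and $\frac{13}{2\sqrt{nx(1-x)}}$ tend to $0$, so
$$
\limsup_{n\to\infty}\left| M_nf(x) - \tfrac12[f(x+)+f(x-)]\right| \le 2\,\QuasiLocalModul{f}{x}{\delta}.
$$
Since this holds for every $\delta>0$ and $\lim_{\delta\to0^+}\QuasiLocalModul{f}{x}{\delta}=0$ by~\eqref{prop_symm_quasi-modul}, letting $\delta\to0^+$ forces the left-hand side to vanish, which is the assertion of~(ii).

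I expect no real obstacle in the corollary itself: apart from the cited signum estimate, the whole proof is bookkeeping with constants already established for the Bernstein-Durrmeyer kernel, so the work is essentially a substitution into Theorem~\ref{theorem_Estim_T_n_Jump} followed by the standard ``fix $\delta$, send $n\to\infty$, then send $\delta\to0^+$'' passage. The genuinely hard analytic fact, the $O(1/\sqrt{n})$ decay of $M_n(\sign(\cdot-x))(x)$, is imported from~\cite{Guo:87} and would be the one step requiring real effort if it had to be reproved.
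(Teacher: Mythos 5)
Your proposal is correct and follows the paper's own route exactly: substitute $M(x)=1$, $\alpha_n(x)=0$, and $\beta_n(x,\delta)\le \frac{1}{2n\delta^2}$ into Theorem~\ref{theorem_Estim_T_n_Jump}, import Guo's bound on $M_n(\sign(\cdot-x))(x)$ for part (i), and conclude part (ii) by the standard passage $n\to\infty$ followed by $\delta\to 0^+$ via~\eqref{prop_symm_quasi-modul}. No gaps.
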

Note that (ii) and an estimate for the order of convergence are obtained by S.~Guo in~\cite{Guo:87}.

\noindent Theorem~\ref{Theorem_EstimSVF_Jamp} for the Bernstein-Durrmeyer operator results in
\begin{corol}\label{Corol_Estim_BDO_SVF_Jamp}
	Let $F \in \mathcal{F}[0,1]$, $x \in (0,1)$. Then 
	
\noindent {\bf (i)}\; For all $\delta > 0$ and large enough $n \in \N$ 
$$
		\haus (M_nF(x), A_F(x) ) \le  2 \QuasiLocalModul{v_F}{x}{2\delta}+  \|F\|_\infty \left ( \frac{2}{n\delta^2} + \frac{13}{2 \sqrt{nx(1-x)}} \right ).
$$

\noindent {\bf (ii)}\;  $\displaystyle \lim_{n \to \infty} { \haus \left(M_nF(x), A_F(x) \right)  } =0$.
\end{corol}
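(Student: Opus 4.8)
The plan is to obtain both parts as a direct specialization of Theorem~\ref{Theorem_EstimSVF_Jamp} to the Bernstein--Durrmeyer operator $M_n$, feeding in the operator-specific quantities already computed at the start of this subsection. Recall that for $M_n$ we have established $M(x) = 1$, $\alpha_n(x) = 0$, and $\beta_n(x,\delta) \le \frac{1}{2n\delta^2}$ for $x \in [0,1]$; moreover, for each fixed $x$ the kernel $\cal K_n(x,\cdot)$ is a polynomial in $t$, hence of bounded variation, so the standing hypotheses of Theorem~\ref{Theorem_EstimSVF_Jamp} are met for every $F \in \mathcal{F}[0,1]$.

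For part~{\bf (i)}, I would substitute these values into the estimate of Theorem~\ref{Theorem_EstimSVF_Jamp}~{\bf (i)}. The term $2\QuasiLocalModul{v_F}{x}{2\delta} M(x)$ collapses to $2\QuasiLocalModul{v_F}{x}{2\delta}$ since $M(x)=1$, the term $\alpha_n(x)$ vanishes, and $4\beta_n(x,\delta)$ is bounded by $\frac{2}{n\delta^2}$. The only remaining ingredient is a bound on the scalar quantity $\big|M_n(\sign(\cdot - x))(x)\big|$. Here I would invoke S.~Guo's estimate from~\cite{Guo:87}, namely $\big|M_n(\sign(\cdot - x))(x)\big| \le \frac{13}{2\sqrt{nx(1-x)}}$, valid for $x \in (0,1)$ and $n$ sufficiently large. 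Collecting the three surviving terms inside the $\|F\|_\infty$ factor yields precisely the claimed inequality; the restriction to large $n$ is inherited solely from Guo's bound.

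For part~{\bf (ii)}, I would verify the four limit hypotheses of Theorem~\ref{Theorem_EstimSVF_Jamp}~{\bf (ii)} at the fixed point $x \in (0,1)$: $\alpha_n(x) = 0 \to 0$ trivially; $\beta_n(x,\delta) \le \frac{1}{2n\delta^2} \to 0$ for each fixed $\delta$; $M_n(\sign(\cdot-x))(x) \to 0$ by Guo's bound, since $x(1-x) > 0$ is a fixed constant; and $M(x) = 1 < \infty$. With all hypotheses in place, the conclusion $\lim_{n\to\infty} \haus\!\left(M_nF(x), A_F(x)\right) = 0$ follows at once. The passage $\delta \to 0^+$, which relies on $\lim_{\delta\to 0^+}\QuasiLocalModul{v_F}{x}{2\delta}=0$ (property~\eqref{prop_symm_quasi-modul} applied to the bounded-variation function $v_F$), is already built into the proof of Theorem~\ref{Theorem_EstimSVF_Jamp}~{\bf (ii)}, so no additional argument is required here.

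Since the corollary is a routine specialization, I do not anticipate a genuine obstacle. The one nontrivial external input is Guo's scalar bound on $\big|M_n(\sign(\cdot-x))(x)\big|$, which is cited rather than reproved; its ``$n$ large enough'' caveat is precisely why part~{\bf (i)} is stated only for large $n$, whereas part~{\bf (ii)}, being an asymptotic statement, is unaffected.
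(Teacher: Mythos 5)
Your proposal is correct and follows exactly the paper's route: the corollary is obtained by substituting $M(x)=1$, $\alpha_n(x)=0$, $4\beta_n(x,\delta)\le \frac{2}{n\delta^2}$ and Guo's bound $\big|M_n(\sign(\cdot-x))(x)\big|\le \frac{13}{2\sqrt{nx(1-x)}}$ into Theorem~\ref{Theorem_EstimSVF_Jamp}, with part~(ii) following from the same limit hypotheses. No discrepancies to report.
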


%%%%%%%%%%%%%%%%%%%%%%%%%%%%%%%%%%%%%%%%%%%%%%%%%%%%%%%%%%%%%%%%%%%%%%%%%%%%%%%%%%%%%%%%%%%%%%%%%%%%%%

\subsection{Kantorovich operators}  

For $f \in L^1[0,1]$ the Kantorovich operator is defined by
$$
K_nf(x) = (n+1) \sum_{k=0}^n p_{n,k}(x) \int_{\frac{k}{n+1}}^{\frac{k+1}{n+1}} f(t) dt = \int_0^1 \cal K_n(x,t) f(t) dt, \quad x \in [0,1], \quad n \in \N,
$$
where
$$
\cal K_n(x,t) = (n+1) \sum_{k=0}^n p_{n,k}(x) \chi_{\left[  \frac{k}{n+1}, \frac{k+1}{n+1} \right]} (t)
$$
and $p_{n,k}(x)$ are defined in~\eqref{Bernstein-basis}.

By formula \eqref{eq:Def_Metr_Operator} we can extend this operator to set-valued functions $F \in \mathcal{F}[0,1]$. 
Using the properties of $p_{n,k}(x)$, we obtain, as in the case of the Bernstein-Durrmeyer operator, that $\cal K_n(x,t) \ge 0$,
${\int_0^1 \cal K_n(x,t) dt = 1 }$, leading to $\alpha_n(x) = 0$ and $M(x) = 1$, $x \in [0,1]$. 

%To estimate $\beta_n(x,\delta)$, we note that $K_n e_0(x) = 1$,

Calculating $K_n e_i(x)$, $i=0,1,2$ one can get (see e.g. \cite{ZengPiriou:98})
$$
K_n e_0(x) = 1, \qquad  K_n e_1(x) = \frac{2nx+1}{2(n+1)}, \qquad K_n e_2(x) = \frac{3n(n-1) x^2 + 6nx + 1}{3(n+1)^2},
$$
and
\begin{equation}\label{eq:Kant_sec_moment}
K_n ((\cdot - x)^2)(x) = \frac{3 (n-1)x(1-x) + 1}{3(n+1)^2}.
\end{equation}
Thus, estimating $\beta_n(x,\delta)$ as in the case of the Bernstein-Durrmeyer operator, we obtain
$$
\beta_n(x,\delta) \le   \frac{1}{\delta^2} K_n ((\cdot - x)^2)(x) =  \frac{1}{\delta^2} \frac{3 (n-1)x(1-x) + 1}{3(n+1)^2} \le \frac{1}{4n\delta^2 } .
$$
% This implies $$ \beta_n(\delta) \le \frac{n+\frac{1}{3}}{4 \delta^2 (n+1)^2} \le \frac{1}{4n\delta^2 }  \, , \quad x \in [0,1]. \quad  $$

\noindent \textbf{The case of continuity points} 
\smallskip

Theorem~\ref{theorem:DeVoreLorentz} takes for the Kantorovich operator the following form, 
 where part {\bf(i)} is new in this form (providing rate of convergence), while parts {\bf (ii)} and {\bf (iii)} are well-known.
	
%	parts {\bf (ii)} and {\bf (iii)} being well-known.}

\begin{corol}\label{Lemma_estim_Kant_RV}
	Let $f \in L^1[0,1]$ be bounded on $[0,1]$,  $x \in [0,1]$. Then %the following statements hold:
	\medskip
	
	\noindent {\bf (i)}\;  For all $\delta > 0$ and $n \in \N$
	$$
	|K_nf(x) - f(x)| \le \omega(f,x,2\delta)  + \|f\|_\infty \frac{1}{2n\delta^2 }.
	$$%\
	
	\noindent {\bf (ii)}\; If $x$ is a point of continuity of~$f$, then $ \lim_{n \to \infty} {| K_nf(x) - f(x)|} = 0 $.
	\medskip
	
%	\noindent {\bf (iii)}\; If $f$ is continuous in $[0,1]$, then the convergence is uniform in $[0,1]$.
%	\medskip
%	
	\noindent {\bf (iii)}\; If $f$ is continuous at all points of a closed interval $I\subseteq[0,1]$, then the convergence is uniform in~$I$.
\end{corol}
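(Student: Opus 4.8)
The plan is to recognize this corollary as a direct specialization of Theorem~\ref{theorem:DeVoreLorentz} to the Kantorovich operator $T_n = K_n$, using the kernel quantities computed immediately before the statement. For the Kantorovich kernel we have already established that $\cal K_n(x,t) \ge 0$ and $\int_0^1 \cal K_n(x,t)\,dt = 1$, so that $\alpha_n(x) = 0$ and we may take $M(x) = 1$ for all $x \in [0,1]$; moreover the second-moment identity~\eqref{eq:Kant_sec_moment} together with $x(1-x) \le 1/4$ gives $\beta_n(x,\delta) \le \frac{1}{4n\delta^2}$. Thus all three kernel quantities entering Theorem~\ref{theorem:DeVoreLorentz} are in hand, and nothing further about the operator needs to be proved.

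First I would obtain (i) by substituting these quantities into the pointwise estimate~\eqref{eq:estim_DeVoreLorentz}. The term $|f(x)|\alpha_n(x)$ vanishes, the factor $M(x)$ equals $1$, and $2\beta_n(x,\delta) \le \frac{1}{2n\delta^2}$, which yields precisely $|K_nf(x)-f(x)| \le \omega(f,x,2\delta) + \|f\|_\infty \frac{1}{2n\delta^2}$. For (ii), I would simply verify the hypotheses of Theorem~\ref{theorem:DeVoreLorentz}(ii): $\alpha_n(x) = 0 \to 0$ trivially, $\beta_n(x,\delta) \le \frac{1}{4n\delta^2} \to 0$ as $n \to \infty$ for each fixed $\delta > 0$, and $M(x) = 1 < \infty$; continuity of $f$ at $x$ then forces the limit to vanish.

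For (iii), the decisive observation is that the bound $\alpha_n(x) = 0$ and the estimate $\beta_n(x,\delta) \le \frac{1}{4n\delta^2}$ are \emph{independent of} $x$. Consequently the convergences $\alpha_n \to 0$ and $\beta_n(\cdot,\delta) \to 0$ are automatically uniform in $x \in I$, and $M(x) \equiv 1$ is bounded on $I$; Theorem~\ref{theorem:DeVoreLorentz}(iii) then delivers uniform convergence on any closed interval of continuity. There is essentially no genuine obstacle here, since all the analytic content lives in Theorem~\ref{theorem:DeVoreLorentz} and in the preceding moment estimate. The only point deserving a word of care is that the $x$-independence of $\alpha_n$ and of the $\beta_n$-bound is exactly what makes the uniformity in (iii) immediate, with no appeal to Lemma~\ref{Lemma_Korovkin} beyond what already enters the proof of Theorem~\ref{theorem:DeVoreLorentz}(iii).
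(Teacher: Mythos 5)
Your proposal is correct and follows exactly the route the paper intends: Corollary~\ref{Lemma_estim_Kant_RV} is obtained by substituting the Kantorovich kernel quantities $\alpha_n(x)=0$, $M(x)=1$ and $\beta_n(x,\delta)\le \frac{1}{4n\delta^2}$ (from the second-moment identity~\eqref{eq:Kant_sec_moment}) into the three parts of Theorem~\ref{theorem:DeVoreLorentz}, with the $x$-independence of these bounds giving the uniformity in part (iii). Nothing is missing.
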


Applying Theorem~\ref{Theorem_EstimSVF_ContPoint}, we get

\begin{corol}\label{Lemma_estim_BDO_SV}
	Let $F \in \mathcal{F}[0,1]$, $x \in [0,1]$. Then %the following statements holds:
	\medskip  
	
	\noindent {\bf (i)}\;  For all $\delta > 0$ and $n \in \N$
	$$
	\haus (K_nF(x), F(x) ) \le \omega(v_F,x,4\delta) +  \|F\|_\infty \frac{1}{2n\delta^2 }.
	$$
	
	\noindent {\bf (ii)}\; If $x$ is a point of continuity of~$F$, then $\lim_{n \to \infty} { \haus \left(K_nF(x), F(x) \right)  } =0 $.
	\medskip  
	
%	\noindent {\bf (iii)}\; If $F$ is continuous in $[0,1]$, then the convergence is uniform in $[0,1]$.
%	\medskip  
%	
	\noindent {\bf (iii)}\; If $F$ is continuous at all points of a closed interval $I\subseteq[0,1]$, then the convergence is uniform in~$I$.
\end{corol}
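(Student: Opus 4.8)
The plan is to obtain this corollary as a direct specialization of the general set-valued continuity-point estimate, Theorem~\ref{Theorem_EstimSVF_ContPoint}, to the Kantorovich operator $K_n$. All the analytic work has already been carried out in proving that theorem and in the moment computations of the present subsection, so the task reduces to checking the one structural hypothesis and substituting the quantities $\alpha_n(x)$, $M(x)$ and $\beta_n(x,\delta)$ already determined for $K_n$.

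First I would confirm that $K_n$ falls under the scope of Theorem~\ref{Theorem_EstimSVF_ContPoint}, whose hypothesis is that ${\cal K}_n(x,\cdot)\in\BV$ for each fixed $x$. As a function of $t$, the Kantorovich kernel is a nonnegative step function with jumps only at the finitely many points $k/(n+1)$, hence of bounded variation; thus the hypothesis holds and the theorem applies to the set-valued operator $K_nF$ defined through the weighted metric integral.

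Next I would recall the three quantities established earlier for this kernel: from ${\cal K}_n(x,t)\ge 0$ and $\int_0^1 {\cal K}_n(x,t)\,dt=1$ one has $\alpha_n(x)=0$ and $M(x)=1$, while the second-moment formula~\eqref{eq:Kant_sec_moment} yields the $x$-independent bound $\beta_n(x,\delta)\le\frac{1}{4n\delta^2}$. Substituting these three values into part~(i) of Theorem~\ref{Theorem_EstimSVF_ContPoint} gives
$$
\haus(K_nF(x),F(x))\le \omega(v_F,x,4\delta)\cdot 1+\|F\|_\infty\Big(2\cdot\tfrac{1}{4n\delta^2}+0\Big),
$$
which is exactly the bound claimed in~(i) once $2\cdot\frac{1}{4n\delta^2}$ is simplified to $\frac{1}{2n\delta^2}$.

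Finally I would read off parts~(ii) and~(iii) directly from the corresponding parts of Theorem~\ref{Theorem_EstimSVF_ContPoint}. For~(ii) the convergence hypotheses are met because $\alpha_n(x)=0\to 0$, $\beta_n(x,\delta)\le\frac{1}{4n\delta^2}\to 0$ for each fixed $\delta>0$, and $M(x)=1<\infty$, so continuity of $F$ at $x$ forces the stated limit. For~(iii), the bound $\frac{1}{4n\delta^2}$ does not depend on $x$, so $\beta_n(x,\delta)\to 0$ uniformly in $x\in I$, while $\alpha_n\equiv 0$ and $M\equiv 1$ are trivially uniform and bounded on $I$; Theorem~\ref{Theorem_EstimSVF_ContPoint}~(iii) then delivers uniform convergence on $I$. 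I do not anticipate any genuine obstacle: the whole difficulty is already packed into the set-valued theorem and the moment computation, leaving only verification and substitution.
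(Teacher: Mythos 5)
Your proof is correct and follows exactly the route the paper takes: the corollary is obtained by specializing Theorem~\ref{Theorem_EstimSVF_ContPoint} to the Kantorovich kernel using the values $\alpha_n(x)=0$, $M(x)=1$ and $\beta_n(x,\delta)\le \frac{1}{4n\delta^2}$ established earlier in that subsection, which yields the stated bound after simplifying $2\cdot\frac{1}{4n\delta^2}=\frac{1}{2n\delta^2}$. Your explicit check that ${\cal K}_n(x,\cdot)$ is a step function, hence of bounded variation, and your observation that the bound on $\beta_n$ is uniform in $x$ are exactly the verifications the paper leaves implicit.
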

\smallskip

\noindent \textbf{The case of discontinuity points} 
\smallskip

Theorem~\ref{theorem_Estim_T_n_Jump} for the Kantorovich operator gives the estimate
\begin{align*}
	\left| K_nf(x) - \frac{1}{2}[f(x+) + f(x-)]  \right| 
	\le 2 \QuasiLocalModul{f}{x}{\delta}  
	+ \|f\|_\infty \left ( \frac{1}{n\delta^2} + \big | K_n(\sign(\cdot - x))(x) \big | \right ) 
\end{align*}
for $x \in (0,1)$. An estimate for $\big | K_n(\sign(\cdot - x))(x) \big |$ is given in~\cite{ZengPiriou:98}. Closely following the consideration of Zeng and Piriou in~\cite{ZengPiriou:98}, we derive here a very similar estimate in a slightly different form.
We also replace the estimate $p_{n,k}(x) \le \frac{1}{\sqrt{2e}} \frac{1}{\sqrt{nx(1-x)}}$ used in~\cite{ZengPiriou:98} and quoted there from an unpublished paper,  by the estimate
\begin{equation}\label{EstimateGuoBasis}
	p_{n,k}(x) \le \frac{5}{2\sqrt{nx(1-x)}}
\end{equation} 
of Guo, which is published in~\cite{Guo:87} with a full proof. 

Let $x \in \left[ \frac{\ell}{n+1}, \frac{\ell + 1}{n+1} \right)$ with a certain $\ell \in \{0,1,\ldots,n\}$, then
\begin{align*}
	K_n(\sign(\cdot - x))(x) & 
	= (n+1) \sum_{k=0}^n p_{n,k}(x) \int _{\frac{k}{n+1}}^{\frac{k+1}{n+1}} \sign(t-x) dt \\
	& = - \sum_{k=0}^{\ell -1} p_{n,k}(x) + \sum_{k=\ell+1}^n p_{n,k}(x) + p_{n,\ell}(x) (n+1) \left( \frac{\ell+1}{n+1} - x - \left(x - \frac{\ell}{n+1} \right)  \right) \\
	& = 2 \sum_{k=\ell+1}^n p_{n,k}(x) - 1 + 2 p_{n,\ell}(x) [(\ell +1) - (n+1)x],
\end{align*}
and, since $\ell \le (n+1)x  < \ell + 1$,
$$
\big | K_n(\sign(\cdot - x))(x) \big | \le 2 \left| \sum_{(n+1)x < k \le n} p_{n,k}(x) - \frac{1}{2} \right| + 2 p_{n,\ell}(x).
$$
It was proved in~\cite{ZengPiriou:98} (see proof of Lemma~2) that
$$
\left| \sum_{nx < k \le n} p_{n,k}(x) - \frac{1}{2} \right| \le  \frac{0.8(2x^2-2x+1)}{\sqrt{nx(1-x)}} < \frac{1}{\sqrt{nx(1-x)}},
$$
where the last inequality is easy to check.
Combining this with \eqref{EstimateGuoBasis}, we obtain 
$$ 
\left| \sum_{(n+1)x < k \le n} p_{n,k}(x) - \frac{1}{2} \right| \le \frac{7}{2\sqrt{nx(1-x)}} 
$$ 
and finally
$$
\big | K_n(\sign(\cdot - x))(x) \big | \le  \frac{12}{\sqrt{nx(1-x)}}
$$
for $x \in (0,1)$. Thus we obtain

\begin{corol}\label{Corol_Estim_KO_RV_Jamp}
	Let $f: [0,1] \to \R$, $f \in \BV$ and $x \in (0,1)$. Then
	
	\noindent {\bf (i)}\; 
	For all $\delta > 0$ and $n \in \N$ 
	$$
	\left| K_nf(x) - \frac{1}{2}[f(x+) + f(x-)]  \right|  \le 2  \QuasiLocalModul{f}{x}{\delta}
	+ \|f\|_\infty \left ( \frac{1}{n\delta^2} + \frac{12}{ \sqrt{nx(1-x)}} \right ),
	$$
	\noindent {\bf (ii)}\;  $\displaystyle	\lim_{n \to \infty} { \left| K_nf(x) - \frac{1}{2}[f(x+) + f(x-)]   \right | } =0 $.
\end{corol}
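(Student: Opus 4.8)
The plan is to obtain part~(i) by reading off the already-specialized form of Theorem~\ref{theorem_Estim_T_n_Jump} together with the two auxiliary bounds collected immediately before the statement, and to obtain part~(ii) by verifying the hypotheses of Theorem~\ref{theorem_Estim_T_n_Jump}~(ii). For part~(i) I would first recall the three facts already established for the Kantorovich operator: $M(x)=1$, $\alpha_n(x)=0$, and $\beta_n(x,\delta) \le \frac{1}{4n\delta^2}$, coming respectively from $\cal K_n(x,t)\ge 0$, $\int_0^1 \cal K_n(x,t)\,dt=1$, and the second-moment formula~\eqref{eq:Kant_sec_moment}. Substituting these into Theorem~\ref{theorem_Estim_T_n_Jump}~(i) eliminates the $\alpha_n(x)$ term and turns $4\beta_n(x,\delta)$ into $\frac{1}{n\delta^2}$, giving precisely the intermediate inequality
$$
\left| K_nf(x) - \tfrac{1}{2}[f(x+)+f(x-)] \right| \le 2\QuasiLocalModul{f}{x}{\delta} + \|f\|_\infty \left( \frac{1}{n\delta^2} + \big| K_n(\sign(\cdot - x))(x) \big| \right)
$$
displayed just above the corollary. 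It then remains only to insert the bound $\big| K_n(\sign(\cdot - x))(x) \big| \le \frac{12}{\sqrt{nx(1-x)}}$ for $x\in(0,1)$, derived in the preceding paragraph along the Zeng--Piriou lines with Guo's estimate~\eqref{EstimateGuoBasis} in place of the unpublished bound, which yields the claimed inequality in~(i).

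For part~(ii), the strategy is to check that all four hypotheses of Theorem~\ref{theorem_Estim_T_n_Jump}~(ii) hold for $K_n$ at a fixed $x\in(0,1)$. Indeed $\alpha_n(x)=0$ and $M(x)=1<\infty$ trivially; for fixed $\delta>0$ the bound $\beta_n(x,\delta)\le\frac{1}{4n\delta^2}$ forces $\lim_{n\to\infty}\beta_n(x,\delta)=0$; and since $x\in(0,1)$ is fixed, $\frac{12}{\sqrt{nx(1-x)}}\to 0$ gives $\lim_{n\to\infty}K_n(\sign(\cdot-x))(x)=0$. With these four limits, Theorem~\ref{theorem_Estim_T_n_Jump}~(ii) applies directly and delivers~(ii). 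Equivalently, one may argue straight from the inequality in~(i) by first letting $n\to\infty$ for fixed $\delta$ (the entire $\|f\|_\infty(\cdots)$ term vanishes) and then letting $\delta\to 0^+$, invoking the quasi-modulus property $\lim_{\delta\to 0^+}\QuasiLocalModul{f}{x}{\delta}=0$ from~\eqref{prop_symm_quasi-modul}.

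Since all the genuinely analytic work — in particular the delicate estimate of the $\sign$-kernel $K_n(\sign(\cdot-x))(x)$ — is already carried out before the statement, I do not anticipate any real obstacle; the proof is a short assembly of ingredients. If pressed to name the most error-prone point, it is purely bookkeeping: ensuring the factor $4\beta_n(x,\delta)=4\cdot\frac{1}{4n\delta^2}=\frac{1}{n\delta^2}$ is propagated correctly from the general estimate of Theorem~\ref{theorem_Estim_T_n_Jump} into the Kantorovich-specific constant.
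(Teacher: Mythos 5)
Your proposal is correct and follows essentially the same route as the paper: specialize Theorem~\ref{theorem_Estim_T_n_Jump} using $M(x)=1$, $\alpha_n(x)=0$, $4\beta_n(x,\delta)\le \frac{1}{n\delta^2}$, then insert the bound $\big|K_n(\sign(\cdot-x))(x)\big|\le \frac{12}{\sqrt{nx(1-x)}}$ for part~(i), and pass to the limit (in $n$, then in $\delta$) for part~(ii). No gaps.
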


Note that {\bf (ii)} is known (see~\cite{ZengPiriou:98} and the bibliography therein), this is also the case for the order of convergence $1/\sqrt{n}$. 

Theorem~\ref{Theorem_EstimSVF_Jamp} for the Kantorovich operator takes the following form.
\begin{corol}\label{Corol_Estim_KO_SVF_Jamp}
	Let $F \in \mathcal{F}[0,1]$, $x \in (0,1)$. Then 
	
	\noindent {\bf (i)}\; For all $\delta > 0$ and  $n \in \N$ 
	$$
	\haus (K_nF(x), A_F(x) ) \le 2 \QuasiLocalModul{v_F}{x}{2\delta} +  \|F\|_\infty \left ( \frac{1}{n\delta^2} + \frac{12}{ \sqrt{nx(1-x)}} \right ).
	$$

	\noindent {\bf (ii)}\;  $\displaystyle \lim_{n \to \infty} { \haus \left(K_nF(x), A_F(x) \right)  } =0$.
\end{corol}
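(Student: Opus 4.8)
The plan is to obtain this corollary as a direct specialization of Theorem~\ref{Theorem_EstimSVF_Jamp} to the Kantorovich operator $K_n$ on $[0,1]$, feeding in the kernel-specific quantities already assembled in this subsection. First I would record the four ingredients established above for the Kantorovich kernel: $M(x) = 1$, $\alpha_n(x) = 0$, the bound $\beta_n(x,\delta) \le \frac{1}{4n\delta^2}$, and the estimate $\big| K_n(\sign(\cdot - x))(x) \big| \le \frac{12}{\sqrt{nx(1-x)}}$ for $x \in (0,1)$ derived just before the statement. The one point worth flagging is that the term $T_n(\sign(\cdot - x))(x)$ occurring in Theorem~\ref{Theorem_EstimSVF_Jamp} is a purely single-valued quantity, not involving the set-valued data of $F$ at all; hence the real-valued sign estimate transfers verbatim to the set-valued setting.

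For part~(i), I would simply substitute these four quantities into the bound of Theorem~\ref{Theorem_EstimSVF_Jamp}(i) with $T_n = K_n$ and $[a,b] = [0,1]$. The factor $M(x) = 1$ leaves the modulus term as $2\QuasiLocalModul{v_F}{x}{2\delta}$; the contribution $4\beta_n(x,\delta)$ collapses to $\frac{1}{n\delta^2}$; the term $\alpha_n(x)$ vanishes; and $\big| K_n(\sign(\cdot - x))(x) \big|$ supplies $\frac{12}{\sqrt{nx(1-x)}}$. Collecting these reproduces the claimed inequality exactly.

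For part~(ii), the cleanest route is to argue directly from (i) via the standard double-limit argument. I would fix $\delta > 0$ and let $n \to \infty$: since $\beta_n(x,\delta) \le \frac{1}{4n\delta^2} \to 0$ and $\frac{12}{\sqrt{nx(1-x)}} \to 0$ for each fixed $x \in (0,1)$, the two error terms disappear and we are left with $\limsup_{n\to\infty} \haus(K_nF(x), A_F(x)) \le 2\QuasiLocalModul{v_F}{x}{2\delta}$. Then I would send $\delta \to 0^+$ and invoke~\eqref{prop_symm_quasi-modul} applied to $v_F$ (which is itself of bounded variation), so that $\QuasiLocalModul{v_F}{x}{2\delta} \to 0$, yielding the convergence. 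Equivalently, one checks that all four hypotheses of Theorem~\ref{Theorem_EstimSVF_Jamp}(ii) hold for $K_n$ at a fixed interior point and appeals to that theorem directly.

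Since every ingredient is already in place, I do not expect a genuine obstacle: the statement is essentially bookkeeping on top of Theorem~\ref{Theorem_EstimSVF_Jamp}. The only step demanding a moment's care is confirming that the sign-term estimate from the real-valued Kantorovich analysis applies unchanged, which it does precisely because $T_n(\sign(\cdot - x))(x)$ depends only on the scalar kernel and not on $F$.
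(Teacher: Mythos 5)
Your proposal is correct and matches the paper's own route: the corollary is obtained by substituting $M(x)=1$, $\alpha_n(x)=0$, $4\beta_n(x,\delta)\le \frac{1}{n\delta^2}$ and $\big|K_n(\sign(\cdot-x))(x)\big|\le \frac{12}{\sqrt{nx(1-x)}}$ into Theorem~\ref{Theorem_EstimSVF_Jamp}, with part (ii) following from the double-limit argument via~\eqref{prop_symm_quasi-modul}. Your remark that the sign term depends only on the scalar kernel and so transfers unchanged to the set-valued setting is exactly the (implicit) justification the paper relies on.
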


\subsection{Rate of convergence of the Bernstein-Durrmeyer operators and the Kantorovich operators for locally Lipschitz functions} \label{Section_Rate_Pointwise}
\smallskip

In this section the sequence $\{T_n\}$ is the sequence of Bernstein-Durrmeyer operators $\{M_n\}$ or the sequence of Kantorovich operators $\{K_n\}$. 

Using the notion of locally Lipschitz functions we can state the following result.

\begin{theorem}\label{Theorem_ConvergenceRate_BDO}
	Let $F \in \mathcal{F}[0,1]$ be locally Lipschitz around a point $x \in [0,1]$. Then 
	$$
	\haus \left(T_nF(x), F(x) \right) = O\left( \frac{1}{\sqrt{n}}\right),  \quad n \to \infty.
	$$
\end{theorem}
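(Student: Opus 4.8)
The plan is to pass from $F$ to its metric selections via~\eqref{haus<=sup_s} and then to estimate each $|T_ns(x)-s(x)|$ \emph{not} through the modulus bound of Theorem~\ref{Theorem_EstimSVF_ContPoint}, but through a second-moment (Cauchy--Schwarz) estimate that retains the full factor $|t-x|$. Indeed, feeding the Lipschitz bound $\omega(v_F,x,4\delta)\le 4\mathcal{L}\delta$ into Theorem~\ref{Theorem_EstimSVF_ContPoint} and balancing against $\beta_n(x,\delta)\sim (n\delta^2)^{-1}$ forces $\delta\sim n^{-1/3}$ and yields only the weaker rate $O(n^{-1/3})$; the factor $|t-x|$ must be kept intact and paired with $T_n((\cdot-x)^2)(x)=O(1/n)$ from~\eqref{eq:BDO_sec_moment} and~\eqref{eq:Kant_sec_moment}. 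Thus, by~\eqref{haus<=sup_s}, it suffices to produce a bound $|T_ns(x)-s(x)|\le C'\,n^{-1/2}$ holding for \emph{every} $s\in\mathcal{S}(F)$ with $C'$ independent of $s$.

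First I would propagate the Lipschitz regularity to all metric selections with a single constant. Since $F$ is locally Lipschitz around $x$ with some constant $\mathcal{L}$, Lemma~\ref{Lip_1}(i) gives that $v_F$ is locally Lipschitz around $x$ with the same $\mathcal{L}$, hence (Remark~\ref{f_AroundLip_then_LocalLIp}) $v_F\in\LipAtPoint{x}{\mathcal{L}}$ with some radius $\delta_0>0$. Lemma~\ref{Lip_3} then yields $s\in\LipAtPoint{x}{4\mathcal{L}}$ for every $s\in\mathcal{S}(F)$, with radius $\delta_0/2$ inherited from $v_F$. Because each selection is bounded by $\|F\|_\infty$ (Result~\ref{Result_MetrSel_InheritVariation}), Lemma~\ref{lemma:loc_Lip_glob_Lip_at_point} upgrades this to global Lipschitzness at $x$: there is $\widetilde{\mathcal{L}}$ with $|s(t)-s(x)|\le\widetilde{\mathcal{L}}\,|t-x|$ for all $t\in[0,1]$. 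The decisive point is that the constant produced by Lemma~\ref{lemma:loc_Lip_glob_Lip_at_point}, namely $\widetilde{\mathcal{L}}=\max\!\big(4\mathcal{L},\,8\|F\|_\infty/\delta_0\big)$, depends only on $\mathcal{L}$, $\delta_0$ and $\|F\|_\infty$, and is therefore the \emph{same} for all $s\in\mathcal{S}(F)$.

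Next I would estimate $|T_ns(x)-s(x)|$ using that $\mathcal{K}_n(x,\cdot)\ge 0$ and $\int_0^1\mathcal{K}_n(x,t)\,dt=1$ for both operators (so $\alpha_n(x)=0$, $M(x)=1$). Applying Lemma~\ref{lemma_IntegralProp} componentwise and the uniform Lipschitz bound,
$$
|T_ns(x)-s(x)| = \left| \int_0^1 \mathcal{K}_n(x,t)\big(s(t)-s(x)\big)\,dt \right| \le \widetilde{\mathcal{L}} \int_0^1 \mathcal{K}_n(x,t)\,|t-x|\,dt.
$$
Reading $\mathcal{K}_n(x,t)\,dt$ as a probability measure and applying Cauchy--Schwarz,
$$
\int_0^1 \mathcal{K}_n(x,t)\,|t-x|\,dt \le \left( \int_0^1 \mathcal{K}_n(x,t)(t-x)^2\,dt \right)^{1/2} = \big(T_n((\cdot-x)^2)(x)\big)^{1/2}.
$$
By~\eqref{eq:BDO_sec_moment} and~\eqref{eq:Kant_sec_moment}, together with $x(1-x)\le\tfrac14$, one has $T_n((\cdot-x)^2)(x)\le C/n$ for an absolute constant $C$ (with $C=\tfrac12$ for $M_n$ and $C=\tfrac14$ for $K_n$). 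Hence $|T_ns(x)-s(x)|\le\widetilde{\mathcal{L}}\sqrt{C}\,n^{-1/2}$ uniformly in $s$, and taking the supremum over $\mathcal{S}(F)$ and invoking~\eqref{haus<=sup_s} gives $\haus(T_nF(x),F(x))=O(n^{-1/2})$.

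The main obstacle is the second step: securing a single global Lipschitz constant $\widetilde{\mathcal{L}}$ valid across all metric selections. This requires tracking that both the Lipschitz radius (which descends from $v_F$ through Lemma~\ref{Lip_3}) and the uniform bound $\|F\|_\infty$ entering Lemma~\ref{lemma:loc_Lip_glob_Lip_at_point} are independent of the individual selection. This is precisely why the chain of Lemmas~\ref{Lip_1}, \ref{Lip_3} and~\ref{lemma:loc_Lip_glob_Lip_at_point} is set up as it is: it guarantees the \emph{linear} bound $|s(t)-s(x)|\le\widetilde{\mathcal{L}}|t-x|$ on all of $[0,1]$ with $\widetilde{\mathcal{L}}$ independent of $s$, which is exactly what renders the supremum over $\mathcal{S}(F)$ finite and of order $n^{-1/2}$. (No separate treatment of $x\in\{0,1\}$ is needed, since the local Lipschitz hypothesis, the normalization $\int_0^1\mathcal{K}_n=1$, and the second-moment bounds all hold at the endpoints as well.)
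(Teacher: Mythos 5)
Your proof is correct and follows essentially the same route as the paper's: reduction to the metric selections via~\eqref{haus<=sup_s}, propagation of the Lipschitz property through Lemmas~\ref{Lip_1}, \ref{Lip_3} and~\ref{lemma:loc_Lip_glob_Lip_at_point} to a global Lipschitz bound at $x$, and then the Cauchy--Schwarz estimate against the second moments~\eqref{eq:BDO_sec_moment}, \eqref{eq:Kant_sec_moment}. Your explicit verification that the constant $\widetilde{\mathcal{L}}$ is uniform over all $s\in\mathcal{S}(F)$ is a point the paper's proof leaves implicit, but the argument is otherwise identical.
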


\begin{proof}
	By~\eqref{haus<=sup_s}
	$$
	\haus \left(T_nF(x), F(x) \right) \le \sup{ \left\{ \left| T_ns(x) - s(x)  \right| \ : \ s \in \mathcal{S}(F) \right\} }.
	$$
	Since $F$ is Lipschitz around the point $x$ with some $\mathcal{L} > 0$, by~(ii) of Lemma~\ref{Lip_1}, $v_F \in \LipAtPoint{x}{ \cal{L} }$
	%with some $\mathcal{L} > 0$, 
	and by Lemma~\ref{Lip_3} each metric selection $s \in \cal{S}(F)$ satisfies $s \in \LipAtPoint{x}{4 \cal{L} }$. By Lemma~\ref{lemma:loc_Lip_glob_Lip_at_point} 
	there exists $\widetilde{\mathcal{L}} > 0$ such that 

		\begin{equation}\label{number}
			|s(z) - s(x)| \le \widetilde{\mathcal{L}} |z - x|, \quad \forall z \in [0,1].
		\end{equation}
		Using Lemma~\ref{lemma_IntegralProp}, \eqref{number}, the Cauchy-Schwarz inequality and \eqref{eq:BDO_sec_moment} for $\{M_n\}$ or \eqref{eq:Kant_sec_moment} for $\{K_n\}$, we get 
	\begin{align*}
		\left|T_ns(x) - s(x)  \right| & \le \int_0^1 |s(t) - s(x)| \cal K_n(x,t) dt \le \widetilde{\mathcal{L}} \int_0^1 |t - x| \cal K_n(x,t) dt \\
		& \le \widetilde{\mathcal{L}} \left( \int_0^1 |t - x|^2 \cal K_n(x,t) dt \right)^{\frac{1}{2}}   \left( \int_0^1  \cal K_n(x,t) dt \right)^{\frac{1}{2}} = O\left( \frac{1}{\sqrt{n}}\right),  \quad n \to \infty,
	\end{align*}
	and the statement follows.
\end{proof}

%%%%%%%%%%%%%%%%%%%%%%%%%%%%%%%%%%%%%%%%%%%%%%%%%%%%%%%%%%%%%%%%%%%%%%%%%%%%%%%%%%%%%%%%%%%%%%%%%%%%%%

\section{Approximation of the set of metric selections in $\bold {L^1[a,b]}$}\label{Sect_TwoSets}

In this section we consider the sequence of operators $\{T_n\}$ defined in~\eqref{eq:Def_Operator} and study two sets of functions, $\setMS$ and the set
$$
{T_n\setMS } = \{ T_n s : \,  s\in \setMS \} .
$$
Note that for $F \in \calF$, the set $\setMS \subset L^1[a,b]$, since it consists of functions of bounded variation.
We assume that $\cal K_n(\cdot, \cdot) \in C([a,b]^2) $. This condition  guaranties 
that $T_n: \setMS \to C[a,b]\subset L^1[a,b]$.

Motivated by~\eqref{eq:Operator_Repr_Metric_Sel}, we regard the set of functions $T_n\setMS$ as an approximant to $F$, represented, in view of Result~\ref{Result_MetSel_ThroughAnyPoint_Repres}, by the set of functions $\setMS$.
We show below that $\setMS$ and $T_n\setMS$ are elements of the metric space $\widetilde{H}$
of compact non-empty subsets of $L^1[a,b]$, endowed with the Hausdorff metric. We use this metric to measure the approximation error of $\setMS$ by $T_n\setMS$.
\smallskip
%=====================================================================================================

\subsection{Two compact sets of functions}

In the the next two lemmas we prove that the sets  $\setMS$, $T_n\setMS$ are compact  in $L^1{[a,b]}$, 
thus they are elements of $ \widetilde{H} $. 
It is enough to show that they are sequentially compact, i.e. that every sequence in  $\setMS$ (in $T_n\setMS$, respectively) has a convergent subsequence with a limit in $\setMS$ (in $T_n\setMS$,  respectively).

	\begin{lemma}\label{SF_compact}
		For $F \in \calF$ the set $\setMS$ is compact in $L^1{[a,b]}$.
	\end{lemma}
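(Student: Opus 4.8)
The plan is to establish sequential compactness in three moves: extract a pointwise-convergent subsequence via a Helly-type selection theorem, identify the pointwise limit as a metric selection using Result~\ref{Result-limit_of_MS_isMS}, and finally upgrade the pointwise convergence to $L^1$-convergence by dominated convergence. I would take an arbitrary sequence $\{s_k\}_{k\in\N} \subset \setMS$ and aim to produce a subsequence converging in $L^1[a,b]$ to a limit lying in $\setMS$.

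First I would invoke the uniform bounds available from Result~\ref{Result_MetrSel_InheritVariation}: every metric selection satisfies $V_a^b(s_k) \le V_a^b(F)$ and $\|s_k\|_\infty \le \|F\|_\infty$, so the family $\{s_k\}$ is uniformly bounded in total variation and in supremum norm. Writing each $s_k$ componentwise as $s_k = (s_{k,1},\ldots,s_{k,d})$, every scalar component $s_{k,i}:[a,b]\to\R$ has variation controlled by a constant depending only on $F$ and the fixed norm on $\Rd$. Applying the classical Helly selection theorem to the first component, then passing to a further subsequence for the second, and iterating through all $d$ coordinates by a diagonal argument, I obtain a subsequence $\{s_{k_j}\}$ converging pointwise on all of $[a,b]$ (endpoints included) to some $s:[a,b]\to\Rd$ of bounded variation.

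Next, since $\{s_{k_j}\}$ is a sequence of metric selections of $F$ that converges pointwise, Result~\ref{Result-limit_of_MS_isMS} guarantees directly that the limit $s$ is again a metric selection of $F$, that is $s \in \setMS$. This is the step carrying the genuine set-valued content, and it is precisely the reason the earlier theory was developed. Finally, to pass from pointwise to $L^1$-convergence, I would note that $|s_{k_j}(x)-s(x)| \le 2\|F\|_\infty$ for all $x\in[a,b]$, a constant that is integrable over the compact interval $[a,b]$; the dominated convergence theorem then yields $\|s_{k_j}-s\|_{L^1}\to 0$. Hence every sequence in $\setMS$ admits a subsequence converging in $L^1[a,b]$ to an element of $\setMS$, which is the desired sequential compactness.

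The main obstacle, once the two quoted Results are in hand, is the correct invocation of Helly's theorem for $\Rd$-valued (rather than scalar) functions: one must reduce to the scalar case componentwise and ensure, via the diagonal argument, that a single subsequence works simultaneously for all $d$ coordinates and at every point of $[a,b]$. The uniform variation bound from Result~\ref{Result_MetrSel_InheritVariation} is exactly what makes this reduction legitimate, and the uniform sup bound is exactly what supplies the integrable dominating function in the last step, so no further estimates are needed beyond these.
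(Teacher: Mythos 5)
Your proof is correct and follows essentially the same route as the paper: uniform variation and sup-norm bounds from Result~\ref{Result_MetrSel_InheritVariation}, Helly's selection principle for a pointwise-convergent subsequence, Result~\ref{Result-limit_of_MS_isMS} to identify the limit as a metric selection, and dominated convergence to upgrade to $L^1$. Your explicit componentwise-plus-diagonal handling of Helly's theorem for $\Rd$-valued functions is a detail the paper leaves implicit, but the argument is identical in substance.
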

	\begin{proof}
		By Result~\ref{Result_MetrSel_InheritVariation} any metric selection $s\in \setMS$ satisfies $\|s\|_\infty\le \|F\|_\infty$  and $V_a^b(s) \le V_a^b(F)$. Applying Helly's selection principle, we conclude that for any sequence of metric selections there is a subsequence converging pointwisely at all points $x \in [a,b]$. By Result~\ref{Result-limit_of_MS_isMS} the pointwise limit function of such a subsequence is a metric selection. Thus by Lebesgue Dominated Convergence Theorem this subsequence converges in the $L^1$-norm to the same limit metric selection.
	\end{proof}
	
 	\begin{lemma}\label{T_nSF_compact}
		Let $F \in \calF$ and assume that $\cal K_n(\cdot, \cdot) \in C([a,b]^2) $. %and $| \cal K_n(x,a)| + V_a^b \cal K_n(x, \cdot) \le K(x) $ with $K(x)\in L^1[a,b])$.
		%	 $\cal K_n(\cdot,t) , \cal K_n(x, \cdot) \in BV[a,b]$ and $\int_a^b |\cal K_n(x,t)| dt \le M(x)$, with $M\in L^1[a,b])$. 
		Then the set $T_n\setMS$ is compact in $L^1[a,b]$.
	\end{lemma}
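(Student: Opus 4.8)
The plan is to deduce the compactness of $T_n\setMS$ from the already established compactness of $\setMS$ (Lemma~\ref{SF_compact}) together with the continuity of the linear operator $T_n$ in the $L^1$-norm. Concretely, I would show that $T_n$ maps $L^1[a,b]$ continuously into $L^1[a,b]$, and then invoke the elementary fact that a continuous image of a sequentially compact set is sequentially compact.

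First I would record that, since $\cal K_n(\cdot,\cdot) \in C([a,b]^2)$ and $[a,b]^2$ is compact, the kernel is bounded: there is $C_n > 0$ with $|\cal K_n(x,t)| \le C_n$ for all $(x,t) \in [a,b]^2$. Using Lemma~\ref{lemma_IntegralProp} componentwise, for any $s,\tilde s \in \setMS$ and $x \in [a,b]$,
$$
|T_n s(x) - T_n \tilde s(x)| \le \int_a^b |\cal K_n(x,t)|\, |s(t) - \tilde s(t)|\, dt \le C_n \|s - \tilde s\|_{L^1},
$$
so that, integrating in $x$, $\|T_n s - T_n \tilde s\|_{L^1} \le (b-a) C_n \|s - \tilde s\|_{L^1}$. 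Hence $T_n$ is Lipschitz, and in particular continuous, as a map from $L^1[a,b]$ into $L^1[a,b]$ (its image lying in fact in $C[a,b]$, by the uniform continuity of $\cal K_n$ on the compact square).

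Next I would carry out the sequential compactness argument directly. Let $\{T_n s_k\}_{k\in\N}$ be an arbitrary sequence in $T_n\setMS$, with $s_k \in \setMS$. By Lemma~\ref{SF_compact} the set $\setMS$ is compact in $L^1[a,b]$, so there is a subsequence $\{s_{k_j}\}_{j\in\N}$ and a metric selection $s \in \setMS$ with $s_{k_j} \to s$ in $L^1[a,b]$. The Lipschitz estimate above gives $\|T_n s_{k_j} - T_n s\|_{L^1} \le (b-a) C_n \|s_{k_j} - s\|_{L^1} \to 0$ as $j \to \infty$, and $T_n s \in T_n\setMS$ since $s \in \setMS$. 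Thus every sequence in $T_n\setMS$ has a subsequence converging in $L^1[a,b]$ to an element of $T_n\setMS$, which is the desired sequential compactness.

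The argument is short, and I do not expect a genuine obstacle: the only point requiring verification is the continuity of $T_n$ in the $L^1$-norm, which reduces entirely to the boundedness of the kernel on the compact square $[a,b]^2$. The sole care needed is to phrase the continuity estimate in the $L^1$-norm (rather than the sup-norm), which the uniform bound $C_n$ supplies at once.
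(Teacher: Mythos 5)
Your proof is correct. It does, however, take a different route from the paper's. The paper does not invoke Lemma~\ref{SF_compact} at all: given a sequence $\sigma_k = T_n s_k$, it applies Helly's selection principle directly to the underlying metric selections $s_k$ to extract a pointwise convergent subsequence with limit $s^\infty$, uses Result~\ref{Result-limit_of_MS_isMS} to conclude $s^\infty \in \setMS$, and then passes to the limit twice via the Lebesgue Dominated Convergence Theorem --- once inside the integral to get pointwise convergence of $\sigma_{k_\ell}$ to $T_n s^\infty$, and once more (using the uniform bound $|\sigma_{k_\ell}(x)| \le \|F\|_\infty \|\mathcal{K}_n\|_\infty (b-a)$) to upgrade pointwise convergence to $L^1$ convergence. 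Your argument instead factors through the already-established $L^1$-compactness of $\setMS$ together with the observation that $T_n$ is Lipschitz from $L^1[a,b]$ to $L^1[a,b]$ with constant $(b-a)\sup_{[a,b]^2}|\mathcal{K}_n|$, so that $T_n\setMS$ is the continuous image of a compact set. This is shorter and avoids duplicating the Helly/DCT machinery that already appears in the proof of Lemma~\ref{SF_compact}; what the paper's version buys is self-containedness at the level of pointwise convergence, never needing the quantitative $L^1 \to L^\infty$ bound on the operator. Both arguments are valid, and your Lipschitz estimate is correctly justified by the boundedness of the continuous kernel on the compact square.
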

	
	\begin{proof} 
		We have to show that any sequence $\{\sigma_k\}_{k=1}^\infty \subset T_n\setMS$ has a subsequence converging  in the $L^1$-norm  and that its $L^1$-limit is in $T_n\setMS$.
		
		By definition 
		$$
		\sigma_k (x) = T_n s_k(x)  = \int_{a}^{b} \cal K_n(x ,t) s_k(t) dt, 
		$$ 
		for some $s_k \in \setMS$.
		%Consider a sequence ${ \{\sigma_k\}_{k \in \N} \subset T_n\setMS }$
		%$$ \sigma_k = \int_{a}^{b} \cal K_n(\cdot ,t) s_k(t) dt  \ , \ s_k \in \setMS.$$	
		
		By Helly's selection principle there exists a subsequence $\{s_{k_\ell} \}_{\ell=1}^\infty$ such that $\displaystyle \lim_{\ell\to \infty} s_{k_\ell}(t)=s^\infty (t)$ pointwisely for all $t \in [a,b]$. By Result~\ref{Result-limit_of_MS_isMS}, $s^\infty \in \setMS$. Since $\cal K_n$ is bounded, we have  $\displaystyle \lim_{\ell\to \infty} \cal K_n(x ,t) s_{k_\ell}(t) = \cal K_n(x ,t)s^\infty (t)$,  for all $x, t \in [a,b]$. Clearly, the functions $\cal K_n(x ,t) s_{k_\ell}(t)$, $\ell \in \N$ 
		are dominated by $\|\cal K_n\|_\infty \|F\|_\infty$. Applying Lebesgue Dominated Convergence Theorem we get
		\begin{equation}\label{TnMS_def}
			\lim_{\ell \to \infty} \sigma_{k_\ell}(x) = \lim_{\ell \to \infty} \int_{a}^{b} \cal K_n(x,t) s_{k_\ell}(t)dt = \int_{a}^{b} \cal K_n(x,t) s^\infty(t)dt  =\sigma^\infty(x), \qquad x\in [a,b].
		\end{equation}
		Thus, we have that $\{\sigma_{k_\ell}(x)\}_{\ell=1}^\infty$ converges pointwisely to $\sigma^\infty(x)$ for all $x\in [a,b]$. Also the sequence $\{\sigma_{k_\ell}\}_{\ell=1}^\infty$ is dominated by $\|F\|_\infty \|\cal K_n\|_\infty (b-a)$, indeed 
		$$
		| \sigma_{k_\ell}(x) | \le \int_{a}^{b} | \cal K_n(x ,t)| \, |s_{k_\ell}(t)| dt \le 
		\|F\|_\infty \int_{a}^{b} | \cal K_n(x ,t)| dt \le \|F\|_\infty \|\cal K_n\|_\infty (b-a ) , \quad x \in [a,b].
		$$
		It follows that the sequence of functions $ \{ \sigma_{k_\ell}(x) \}$,  ${\ell \in \N} $, $x \in [a,b]$ satisfies the assumptions of the Lebesgue Dominated Convergence Theorem and we get 
		$$
		\lim_{\ell \to \infty } \int_{a}^{b}| \sigma_{k_\ell}(x) - \sigma^\infty(x) | dx = 0.
		$$
		To finish the proof, note that by~\eqref{TnMS_def} $\sigma^\infty \in T_n\setMS$, since $s^\infty  \in \setMS$.
	\end{proof}

\noindent The Hausdorff distance between the sets $\setMS$ and $T_n\setMS$ is 
$$
%\begin{equation} \label{Hausdorff_L1}
	\haus(\setMS , T_n\setMS ) =\max \left \{ \sup_{ \sigma \in T_n\setMS} \, \inf_{ s\in \setMS } \| s-\sigma\|_{L^1} \, , \, 
	\sup_{s \in \setMS }  \inf_{\sigma \in T_n\setMS }\| s-\sigma\|_{L^1} \right \} .
%\end{equation}
$$

\begin{lemma}\label{lemma_Hausdorff_L^1}
Let  $F \in \calF$. For the sets $\setMS$ and $T_n\setMS$
$$
\haus(\setMS , T_n\setMS) \le \sup_{s\in \setMS} \| s-T_n s\|_{L^1} .
$$
\end{lemma}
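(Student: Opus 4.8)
The plan is to bound each of the two directed (one-sided) distances appearing in the definition of $\haus(\setMS, T_n\setMS)$ separately by the single quantity $D := \sup_{s \in \setMS} \|s - T_n s\|_{L^1}$, and then take the maximum. Both $\setMS$ and $T_n\setMS$ are compact subsets of $L^1[a,b]$ by Lemmas~\ref{SF_compact} and~\ref{T_nSF_compact}, so all the suprema and infima are taken over genuine nonempty sets and the Hausdorff distance is well defined.

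First I would handle the term $\sup_{s \in \setMS} \inf_{\sigma \in T_n\setMS} \|s - \sigma\|_{L^1}$. For a fixed metric selection $s \in \setMS$, the function $T_n s$ belongs to $T_n\setMS$ by the very definition of that set, so $T_n s$ is an admissible competitor in the inner infimum. Hence $\inf_{\sigma \in T_n\setMS} \|s - \sigma\|_{L^1} \le \|s - T_n s\|_{L^1} \le D$, and taking the supremum over $s \in \setMS$ preserves the bound $\le D$.

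For the other direction, $\sup_{\sigma \in T_n\setMS} \inf_{s \in \setMS} \|s - \sigma\|_{L^1}$, the key observation is that $T_n\setMS$ is, by construction, the full image of $\setMS$ under $T_n$; thus every $\sigma \in T_n\setMS$ can be written as $\sigma = T_n s$ for some $s \in \setMS$. Using that particular preimage $s$ as the competitor in the infimum over $\setMS$ gives $\inf_{s' \in \setMS} \|s' - \sigma\|_{L^1} \le \|s - T_n s\|_{L^1} \le D$, and again the supremum over $\sigma \in T_n\setMS$ preserves the bound $\le D$. Since both directed distances are $\le D$, their maximum---which is exactly $\haus(\setMS, T_n\setMS)$---is $\le D$, as claimed.

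There is no substantial obstacle here: the argument rests only on the fact that $T_n\setMS$ is defined as the entire $T_n$-image of $\setMS$ (so that every element admits a preimage) together with the fact that the diagonal pairing $s \leftrightarrow T_n s$ simultaneously supplies a competitor in each of the two infima. The only point requiring a little care is to keep the two roles of $s$ distinct: in the first direction $s$ is the fixed source and $T_n s$ the target competitor, whereas in the second direction $\sigma = T_n s$ is the fixed target and $s$ is the source competitor. This is also precisely the reasoning alluded to earlier when deriving~\eqref{haus<=sup_s} and~\eqref{haus<=sup_s_Jump}.
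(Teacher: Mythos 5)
Your proposal is correct and follows essentially the same route as the paper: each of the two directed distances is bounded by $\sup_{s\in \setMS}\|s - T_ns\|_{L^1}$, using $T_n s$ as the competitor in the first infimum and a preimage $s$ of $\sigma = T_n s$ as the competitor in the second. Nothing is missing.
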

\begin{proof}
For any fixed $s^* \in \setMS$ we have 
$$
\displaystyle \inf_{\sigma \in T_n\setMS }\| s^* -\sigma \|_{L^1}\le \|s^* - T_ns^*\|_{L^1} \le \sup_{s\in \setMS} \| s-T_n s\|_{L^1}.
$$
Similarly, for any fixed  $\sigma^* \in T_n\setMS$ there is $s^* \in \setMS$ 
with $T_n s^* = \sigma^* $ and we get
$$
\displaystyle \inf_{s\in \setMS } \| s - \sigma^* \|_{L^1}\le\| s^*-T_ns^* \|_{L^1} \le \sup_{s\in \setMS} \| s-T_n s\|_{L^1}. 
$$ 
Now the statement follows easily.
\end{proof}

%=====================================================================================================
\subsection{Error estimates}\label{Sect_Error bounds_in_$L^1$}

For $e_i(x)=x^i$, denote ${\displaystyle \lambda_{n} = ( \max_{i=0,1} \| T_n e_i  - e_i\|_{L^1})^{1/2}}$.
Here we use the integral moduli $\vartheta$ and $\vartheta_{2}$ defined in Section~\ref{Sect_Prelim_Regularuty}.
In light of the Theorem in~\cite{Berens_DeVore} we get for real-valued functions  (as a special case of the statement in~\cite{Berens_DeVore} for $p=1$)

\begin{result}\label{Result_Rate_L1}
	Let $T_n$ be a positive linear operator satisfying  $\|T_ng  \|_{L^1}  \le \|g  \|_{L^1} $ for any $g \in L^1[a,b]$. Then 
	\begin{equation}\label{Result_1_Berens-DeVore}
	\|T_n f - f \|_{L^1} \le C_1 \left (\lambda_{n}^{2} \| f \|_{L^1}+  \vartheta_{2}(f, \lambda_{n} ) \right ), \quad f \in L^1[a,b] .
	\end{equation}
	If, in addition $T_n e_0 \equiv e_0$, then 
	\begin{equation}\label{Result_2_Berens-DeVore}
		\|T_n f - f \|_{L^1} \le C_2 \left (\lambda_{n}^{2} \, \ModulContPOne{f}{b-a} +  \vartheta_{2}\big ( f, \lambda_{n} \big ) \right ) , \quad f \in L^1[a,b].
	\end{equation}
	Here the constants $C_i$, $i=0,1$ depend only on the interval $[a,b]$.
\end{result}
Applying~\eqref{Result_1_Berens-DeVore}  and~\eqref{Result_2_Berens-DeVore} on a metric selection $s \in \cal{S}(F)$ and using~\eqref{relation_moduli} and~\eqref{estimate_modulu_p=1} we get
$$%\begin{equation}\label{Result_From_Berens-DeVore}
\|T_n s - s \|_{L^1} \le \widetilde{C}_1 \left ( \lambda_{n}^{2} \, \| s \|_{L^1} + 2 \ModulContPOne{s}{\lambda_{n} }\right ) \le  
\widetilde{C}_1 \left ( \lambda_{n}^{2} \, \| s \|_{L^1}   + 2 \lambda_{n} V_a^b(s) \right ) ,
$$%\end{equation}
and in the case when $T_n e_0 \equiv e_0$
$$%\begin{equation}\label{Result_From_Berens-DeVore}
	\|T_n s - s \|_{L^1} \le \widetilde{C}_2 \left ( \lambda_{n}^{2} \, \ModulContPOne{s}{b-a}+ 2 \ModulContPOne{s}{\lambda_{n} }\right ) \le  
	\widetilde{C}_2 \left ( \lambda_{n}^{2} \, (b-a) V_a^b(s)  + 2 \lambda_{n} V_a^b(s) \right ) ,
$$%\end{equation}
where $\widetilde{C}_i$, $i=0,1$ depend only on $[a,b]$ and on the underlying norm in $\Rd$.

\noindent Since by Result~\ref{Result_MetrSel_InheritVariation}  $V_a^b(s) \le V_a^b(F)$, we obtain for any $s \in \setMS$ 
\begin{equation}\label{Concequence_From_Berens-DeVore_1}
	\sup_{s\in \setMS} \|T_n s - s \|_{L^1}  \le  \widetilde{C}_1 \left ( \lambda_{n}^{2} \, \sup_{s\in \setMS} \| s \|_{L^1}   + 2 \lambda_{n} V_a^b(F) \right ) ,
\end{equation}
and if  $T_n e_0 \equiv e_0$
\begin{equation}\label{Concequence_From_Berens-DeVore_2}
	\sup_{s\in \setMS} \|T_n s - s \|_{L^1}  \le  \widetilde{C}_2 \left ( \lambda_{n}^{2} \, (b-a)  + 2 \lambda_{n}  \right ) V_a^b(F) .
\end{equation}
Thus in view of~Lemma~\ref{lemma_Hausdorff_L^1}, \eqref{Concequence_From_Berens-DeVore_1} and~\eqref{Concequence_From_Berens-DeVore_2} we arrive at

\begin{theorem}\label{Theorem_ErrorBounds_L1}
		Let $T_n$ be a positive linear operator satisfying  $\|T_ng  \|_{L^1}  \le \|g  \|_{L^1} $ for any $g \in L^1[a,b]$ and let $F \in \calF$. Then 
$$
	\haus(\setMS , T_n\setMS)  \le  \widetilde{C}_1 \left ( \lambda_{n}^{2} \, \sup_{s\in \setMS} \| s \|_{L^1}   + 2 \lambda_{n} V_a^b(F) \right ) 
 \le \widetilde{C}_1 \left ( \lambda_{n}^{2} \, \|F\|_\infty (b-a)   + 2 \lambda_{n} V_a^b(F) \right ).
$$
If, in addition,  $T_n e_0 \equiv e_0$, then
$$
	\haus(\setMS , T_n\setMS)  \le  \widetilde{C}_2 \left ( \lambda_{n}^{2} \, (b-a)  + 2 \lambda_{n}  \right ) V_a^b(F) .
$$
Here $\widetilde{C}_i$, $i=0,1$ depend only on $[a,b]$ and on the underlying norm in $\Rd$.
\end{theorem}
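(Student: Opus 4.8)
The plan is to reduce the set-level estimate to a uniform single-valued estimate and then feed in the Berens--DeVore bound. First I would invoke Lemma~\ref{lemma_Hausdorff_L^1}, which already gives
$$
\haus(\setMS , T_n\setMS) \le \sup_{s\in \setMS} \| s-T_n s\|_{L^1},
$$
so the whole statement follows once I bound the ordinary $L^1$-approximation error $\|s - T_ns\|_{L^1}$ uniformly over all metric selections $s$.

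Next I would apply Result~\ref{Result_Rate_L1} to a fixed metric selection $s$: since $s$ is of bounded variation it lies in $L^1[a,b]$, so \eqref{Result_1_Berens-DeVore} applies (componentwise in the vector-valued case, which only affects the constant through the underlying norm). To turn the abstract bound into the stated one I would control the moduli: by \eqref{relation_moduli} and \eqref{estimate_modulu_p=1}, $\vartheta_2(s,\lambda_n) \le 2\vartheta(s,\lambda_n) \le 2\lambda_n V_a^b(s)$, and in the case $T_n e_0 \equiv e_0$ also $\vartheta(s,b-a) \le (b-a)V_a^b(s)$ for use in \eqref{Result_2_Berens-DeVore}. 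This is exactly the computation displayed just before the theorem, producing $\|T_n s - s\|_{L^1} \le \widetilde{C}_1(\lambda_n^2 \|s\|_{L^1} + 2\lambda_n V_a^b(s))$ and its $e_0$-preserving analogue.

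Finally I would pass to the supremum over $s\in\setMS$. Here Result~\ref{Result_MetrSel_InheritVariation} does the work: $V_a^b(s)\le V_a^b(F)$ for every metric selection, and $\|s\|_\infty\le\|F\|_\infty$ gives $\|s\|_{L^1}\le\|F\|_\infty(b-a)$. Taking the supremum yields \eqref{Concequence_From_Berens-DeVore_1} and \eqref{Concequence_From_Berens-DeVore_2}, and combining these with the reduction from Lemma~\ref{lemma_Hausdorff_L^1} gives both displayed bounds. There is no genuine obstacle here; the only point requiring a word of care is that the Berens--DeVore constant in Result~\ref{Result_Rate_L1} depends only on $[a,b]$ (and the norm on $\Rd$), not on the particular selection, which is precisely what legitimizes taking the supremum over the infinitely many $s\in\setMS$ while keeping a single constant $\widetilde{C}_i$.
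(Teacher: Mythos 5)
Your proposal is correct and follows essentially the same route as the paper: reduce via Lemma~\ref{lemma_Hausdorff_L^1} to a uniform bound on $\|s-T_ns\|_{L^1}$, apply Result~\ref{Result_Rate_L1} together with \eqref{relation_moduli} and \eqref{estimate_modulu_p=1}, and pass to the supremum using Result~\ref{Result_MetrSel_InheritVariation}. No gaps.
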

%=====================================================================================================

\subsection{Examples}\label{Sect_Examples_in_$L^1$}

We apply the results of Section~\ref{Sect_Error bounds_in_$L^1$} to the Bernstein-Durrmeyer operator $M_n$ and the Kantorovich operator $K_n$. 
As was shown in Section~\ref{Sect_SpecificOperators}, these operators satisfy $ M_n e_0 \equiv e_0$ , $K_n e_0 \equiv e_0 $.
Moreover,  
$$
M_n e_1(x) - e_1(x) = \frac{nx+1}{n+2} - x =  \frac{1-2x}{n+2}, \quad K_n e_1(x)  - e_1(x) = \frac{2nx+1}{2(n+1)} - x =  \frac{1-2x}{2(n+1)}\, .
$$ 
%and thus for both operators we have $\lambda_{n} = O \left( \frac{1}{\sqrt{n}} \right)$, $n \to \infty$.
Combining this with the second claim of Theorem~\ref{Theorem_ErrorBounds_L1} we end up with the following result.

\begin{theorem}\label{Theorem_ConvergenceRate_Kant}
	Let $F \in \mathcal{F}[0,1]$ and let $T_n$  be the Bernstein-Durrmeyer operator or the Kantorovich operator.  Then 
$$
	\haus(\setMS , T_n\setMS) \le C V_a^b(F)\left ( \lambda_{n}^{2} \, (b-a)+ 2 \lambda_{n} \right ) = O \left( \frac{1}{\sqrt{n}} \right),
\quad  n \to \infty,
$$
with $C$ depending only on the underlying norm in $\Rd$.
%with $C$ depending only on the interval $[0,1]$.
\end{theorem}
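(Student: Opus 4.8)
The plan is to obtain the result as a direct application of the second part of Theorem~\ref{Theorem_ErrorBounds_L1}, which is the part that applies when $T_n e_0 \equiv e_0$. To invoke it I must confirm that both $M_n$ and $K_n$ satisfy its hypotheses: each is a positive linear operator, each fixes the constant $e_0$, and each satisfies the $L^1$-contraction $\|T_n g\|_{L^1} \le \|g\|_{L^1}$. Positivity is immediate from $\cal K_n(x,t) \ge 0$, already noted in Section~\ref{Sect_SpecificOperators}, and the identities $M_n e_0 \equiv e_0$, $K_n e_0 \equiv e_0$ were recorded there as well.

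The one genuinely new verification is the $L^1$-contraction. Using $\cal K_n(x,t)\ge 0$ together with Fubini's theorem,
$$
\|T_n g\|_{L^1} = \int_0^1 \left| \int_0^1 \cal K_n(x,t) g(t)\, dt \right| dx \le \int_0^1 |g(t)| \left( \int_0^1 \cal K_n(x,t)\, dx \right) dt,
$$
so it suffices to show that $\int_0^1 \cal K_n(x,t)\, dx = 1$ for almost every $t \in [0,1]$. For the Bernstein-Durrmeyer kernel~\eqref{eq:Kernel_BDO} this follows from $\int_0^1 p_{n,k}(x)\, dx = \frac{1}{n+1}$ combined with $\sum_{k=0}^n p_{n,k}(t) = 1$. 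For the Kantorovich kernel the identical computation leaves $\sum_{k=0}^n \chi_{[\frac{k}{n+1},\frac{k+1}{n+1}]}(t)$, which equals $1$ for a.e.\ $t$ since those intervals partition $[0,1]$ up to endpoints. Hence both operators are $L^1$-contractions, and Theorem~\ref{Theorem_ErrorBounds_L1} yields
$$
\haus(\setMS, T_n\setMS) \le \widetilde{C}_2 \left( \lambda_n^2 (b-a) + 2\lambda_n \right) V_a^b(F).
$$

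It then remains to estimate $\lambda_n = (\max_{i=0,1}\|T_n e_i - e_i\|_{L^1})^{1/2}$. The $i=0$ term vanishes because $T_n e_0 \equiv e_0$, and I read off the $i=1$ term from the explicit differences displayed just above the statement: since $\int_0^1 |1-2x|\, dx = \tfrac12$, one gets $\|M_n e_1 - e_1\|_{L^1} = \frac{1}{2(n+2)}$ and $\|K_n e_1 - e_1\|_{L^1} = \frac{1}{4(n+1)}$, so in both cases $\lambda_n^2 = O(1/n)$ and $\lambda_n = O(1/\sqrt{n})$. Substituting into the displayed bound with $b-a=1$, the term $\lambda_n^2(b-a)=O(1/n)$ is dominated by $2\lambda_n=O(1/\sqrt{n})$, giving a right-hand side of order $O(1/\sqrt{n})$ after absorbing the fixed factor $\widetilde{C}_2$ into the constant $C$.

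The main obstacle is precisely the $L^1$-contraction check, as it is the only hypothesis of Theorem~\ref{Theorem_ErrorBounds_L1} not already furnished in Section~\ref{Sect_SpecificOperators}; once the kernel identity $\int_0^1 \cal K_n(x,t)\, dx = 1$ is established via Fubini, the remainder is routine bookkeeping with the first moments computed earlier.
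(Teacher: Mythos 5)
Your proof is correct and follows essentially the same route as the paper: apply the second claim of Theorem~\ref{Theorem_ErrorBounds_L1} and read off $\lambda_n = O(1/\sqrt{n})$ from the explicit first-moment differences. Your explicit verification of the $L^1$-contraction $\|T_n g\|_{L^1}\le\|g\|_{L^1}$ via $\int_0^1 \mathcal{K}_n(x,t)\,dx = 1$ (a.e.\ in $t$) is a welcome addition, since the paper invokes that hypothesis without checking it.
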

Note that the rate $O \left( \frac{1}{\sqrt{n}} \right)$  here is obtained for any $F \in \mathcal{F}[0,1]$, while in the pointwise estimates of Section~\ref{Section_Rate_Pointwise} we obtain this rate under the assumption of 
 local Lipschitz continuity of $F \in \mathcal{F}[0,1]$.

%%%%%%%%%%%%%%%%%%%%%%%%%%%%%%%%  Bibliography %%%%%%%%%%%%%%%%%%%%%%%%%%%%%%%%%%%%%%%%%%%%%%
 
%\newpage

%%%%%%%%%%%%%%%%%%%%%%%%%%%%%%%%%%%%%%%%%%%%%%%%%%%%%%%%%%%%%%%

\end{document}